\numberwithin{equation}{section}
\numberwithin{figure}{section}
\renewcommand{\subsection}[1]{\vspace{3mm}\refstepcounter{subsection}\noindent{\bf \thesubsection. #1.} }
\renewcommand{\subsubsection}[1]{\vspace{3mm}\refstepcounter{subsubsection}\noindent{\bf \thesubsubsection. #1.} }
\numberwithin{equation}{section}
\renewcommand{\labelenumi}{{{\rm(\roman{enumi})}}}
\newtheorem{theorem}{Theorem}\newtheorem{lemma}[theorem]{Lemma}\newtheorem{corollary}[theorem]{Corollary}\newtheorem{proposition}[theorem]{Proposition}
\theoremstyle{definition}
\newtheorem{definition}[theorem]{Definition}\newtheorem{remark}[theorem]{Remark}
\def\CC{\mathbb C}
\def\PP{\mathbb P}
\def\ord{\operatorname{ord}}
\def\min{\mathop{\mathrm{min}}}
\def\CC{\mathbb C}
\def\NN{\mathbb N}
\def\ZZ{\mathbb Z}
\def\PP{\mathbb P}
\def\K{K}
\def\cal{\mathcal }
 \def\ord{\text{ord}}
\def\p{\mathbf p}
\def\gen{\mathfrak g}
\let\a\alpha
\let\b\beta
\let\d\delta
\let\g\gamma
\begin{document}
\title[On Pisot's $d$-th root conjecture for function fields  and related GCD estimates]{On Pisot's $d$-th root conjecture for function fields \\
 and related GCD estimates}
\author{Ji Guo}
\address{Institute of Mathematics\\
 Academia Sinica\\
 6F, Astronomy-Mathematics Building\\
 No. 1, Sec. 4, Roosevelt Road \\
 Taipei 10617\\
 Taiwan}
\email{jiguo@math.sinica.edu.tw}
\author{Chia-Liang Sun}
\address{Institute of Mathematics\\
 Academia Sinica\\
 6F, Astronomy-Mathematics Building\\
 No. 1, Sec. 4, Roosevelt Road \\
 Taipei 10617\\
 Taiwan}
\email{csun@math.sinica.edu.tw}
\author{Julie Tzu-Yueh Wang}
\address{Institute of Mathematics\\
 Academia Sinica\\
 6F, Astronomy-Mathematics Building\\
 No. 1, Sec. 4, Roosevelt Road \\
 Taipei 10617\\
 Taiwan}
\email{jwang@math.sinica.edu.tw}

\thanks{2020\ {\it Mathematics Subject Classification}: Primary 11D61; Secondary 14H05 and 11B37}
\thanks{The second named author was supported in part by Taiwan's MoST grant 107-2115-M-001-013-MY2, and the third named author was supported in part by Taiwan's MoST grant 108-2115-M-001-001-MY2.}

\begin{abstract}
We propose a function-field analog of Pisot's $d$-th root conjecture
on linear recurrences, and prove it under some ``non-triviality''
assumption. Besides a recent result of Pasten-Wang on B{\"u}chi's
$d$-th power problem, our main tool, which is also developed in this
paper, is a function-field analog of an GCD estimate in a recent work
of Levin and Levin-Wang. As an easy corollary of such GCD estimate,
we also obtain an asymptotic result. 
\end{abstract}

\maketitle
\baselineskip=16truept


\section{Introduction }

For a field $k$, we denote by $\overline{k}$ its algebraic closure.
Let $R(X)=\sum_{n=0}^{\infty}b(n)X^{n}$ represent a rational function
in $\mathbb{Q}(X)$ and suppose that $b(n)$ is a $d$-th power in
$\mathbb{Q}$ for all large $n\in\mathbb{N}$. Pisot's $d$-th root
conjecture states that one can choose a $d$-th root $a(n)$ of $b(n)$
such that $\tilde{R}(X):=\sum a(n)X^{n}$ is again a rational function
in $\overline{\mathbb{Q}}(X)$. The sequence $\{b(n)\}$ coming from
the rational function $R(X)$ is a linear recurrence sequence, which
can be written as an \emph{exponential polynomial}, which we now define.
An \emph{exponential polynomial over a field $k$} is a sequence $b:\mathbb{N}\rightarrow k$
of the form 
\begin{equation}
n\mapsto\sum_{i\in I}B_{i}(n)\beta_{i}^{n},\label{eq:exp-poly}
\end{equation}
where $I$ is a (finite) set of indices, each $\beta_{i}\in k^{*}$
is nonzero and each $B_{i}\in k[T]$ is a single-variate polynomial.
When it can be arranged so that each $B_{i}$ is constant, we say
that $b$ is simple. For any exponential polynomial $b:\mathbb{N}\rightarrow k$
and any natural number $d$, it is clear that $b^{d}:\mathbb{N}\rightarrow k$,
defined by $n\mapsto b(n)^{d}$, is still an exponential polynomial.
The following result of Zannier \cite{Zannier2000} essentially proves
its converse, which is a generalization of Pisot's $d$-th root conjecture
stated earlier. 
We also refer to \cite{Zannier2000} for a survey on related works.

\begin{theorem}[\cite{Zannier2000}] \label{thm:Pisot's_Conj}Let $b$
be an exponential polynomial over a number field $k$, and $d\ge2$
be an integer. Suppose that $b(n)$ is the $d$-th power of some element
in $k$ for all but finitely many $n$. Then there exists an exponential
polynomial $a$ over $\overline{k}$ such that $a(n)^{d}=b(n)$ for
all $n$. \end{theorem}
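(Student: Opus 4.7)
The plan is to construct $a$ as a formal $d$-th root of $b$ via a dominant-term binomial expansion, and then to use Schmidt's subspace theorem to show that this formal root is in fact a finite exponential polynomial.

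First I would carry out standard reductions. The Skolem-Mahler-Lech theorem yields that $\{n:b(n)=0\}$ is, up to a finite set, a union of arithmetic progressions; since $n\mapsto b(Mn+r)$ is again an exponential polynomial (with characteristic roots $\beta_i^M$), I may assume $b(n)\neq 0$ for every $n$. Taking $M$ sufficiently divisible, I arrange that each $\beta_i$ has a chosen $d$-th root $\alpha_i\in\overline{k}$ and that the polynomial coefficient of a chosen dominant term is the $d$-th power of a polynomial in $\overline{k}[T]$; the latter follows because an infinite set of integer arguments at which a polynomial takes $d$-th-power values forces the polynomial itself to be a $d$-th power. It then suffices to build $a$ on each resulting subprogression.

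Next, fix an archimedean place $v$ of $\overline{k}$ and, after possibly splitting progressions further, assume $|\beta_1|_v>|\beta_i|_v$ for $i\geq 2$. Writing
$$b(n)=B_1(n)\beta_1^n\bigl(1+u(n)\bigr),\qquad u(n)=\sum_{i\geq 2}\frac{B_i(n)}{B_1(n)}\Bigl(\frac{\beta_i}{\beta_1}\Bigr)^{n},$$
we have $|u(n)|_v\to 0$ exponentially. The binomial series formally produces a candidate
$$\tilde a(n)=\alpha_1^n\,q_1(n)\sum_{\ell\geq 0}\binom{1/d}{\ell}u(n)^{\ell},$$
where $q_1\in\overline{k}[T]$ satisfies $q_1^d=B_1$. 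Each partial sum is a genuine exponential polynomial whose characteristic roots are monomials in the $\alpha_i$.

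The heart of the argument is to show that $\tilde a$ truncates and equals the true $d$-th root $a(n)$, up to a $d$-th root of unity, for all large $n$. I would apply the Schmidt subspace theorem to the $\overline{k}$-points $(\alpha_1^n,\dots,\alpha_r^n,1,n,\dots,n^D)$: the identity $a(n)^d-b(n)=0$ provides an $S$-integral relation among these coordinates for every $n$, and an Evertse-type argument confines $(n,a(n))$ to a proper linear subvariety for all but finitely many $n$. This yields an exponential-polynomial identity $a=\tilde a_N$ for some finite truncation of $\tilde a$; substituting back into $a^d=b$ and matching exponential polynomials term by term forces all further terms of $\tilde a$ to vanish. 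The main obstacle lies precisely in this step: one must control the infinitely many potential characteristic roots appearing in the formal binomial series and rule out ``sporadic'' $d$-th roots of $b(n)$ not captured by the formal expansion, while simultaneously tracking the polynomial prefactors and the choice of $d$-th roots of unity.
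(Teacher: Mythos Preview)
The paper does not prove this statement. Theorem~\ref{thm:Pisot's_Conj} is quoted from Zannier's paper \cite{Zannier2000} as a known result over number fields, and serves only as motivation for the function-field analog Theorem~\ref{dPisot}, which is what the present paper actually establishes. There is therefore no ``paper's own proof'' to compare your proposal against.

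That said, your outline is recognisably in the spirit of Zannier's original argument: the reduction to arithmetic progressions, the selection of a dominant root at an archimedean place, the formal binomial $d$-th root, and an appeal to the subspace theorem are all ingredients of \cite{Zannier2000}. Your sketch is honest about where the real work lies (controlling the infinitely many characteristic roots arising in the binomial series and ruling out sporadic $d$-th roots), but it does not actually carry that step out, and the way the subspace theorem is invoked---on points of the form $(\alpha_1^n,\dots,\alpha_r^n,1,n,\dots,n^D)$ with a single relation $a(n)^d=b(n)$---is not quite how Zannier proceeds; he needs a more delicate setup to extract enough independent linear forms. So as a proof this remains a plan rather than an argument, but for the purposes of this paper the point is moot: the theorem is simply cited, not proved.
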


The main purpose of this paper is to investigate a function-field
analog of Theorem \ref{thm:Pisot's_Conj}.

Let $C$ be a smooth projective algebraic curve of genus $\mathfrak{g}$
defined over an algebraically closed field ${\bf k}$ of characteristic
zero. Let $K:=\mathbf{k}(C)$ be its function field. We will always
denote by $\mathfrak{p}$ a point in $C(\mathbf{k})$, by $S$ a finite
subset of $C(\mathbf{k})$. Since $K$ contains the algebraically-closed
field $\mathbf{k}$, note that for any pair of exponential polynomials
$a:\mathbb{N}\rightarrow K$ and $c:\mathbb{N}\rightarrow\mathbf{k}$
we have that $ca^{d}:\mathbb{N}\rightarrow K$, defined by $n\mapsto c(n)a(n){}^{d}$,
is still an exponential polynomial whose $n$-th term is the $d$-th
power of some element in $K$ for all $n\in\mathbb{N}$. A plausible
statement obtained from Theorem \ref{thm:Pisot's_Conj} by replacing
the number field $k$ by our function field $K$ must therefore have
its conclusion modified to the existence of exponential polynomials
$a$, $c$, respectively over $\overline{K}$ and over $\mathbf{k}$,
such that $c(n)a(n)^{d}=b(n)$ for all $n$. Our result in this direction
is as follows. 
 \begin{theorem} \label{dPisot} Let $b(n)=\sum_{i=1}^{\ell}B_{i}(n)\beta_{i}^{n}$
be an exponential polynomial over $K$, i.e. $B_{i}\in K[T]$ and
$\beta_{i}\in K^{*}$. Let $\Gamma$ be the multiplicative subgroup
of $K^{*}$ generated by $\beta_{1},\hdots,\beta_{\ell}$. Assume
that $\Gamma\cap{\bf k}^{*}=\{1\}$. If $b(n)$ is a $d$-th power
in $K$ for infinitely many $n\in\NN$, then there exists an exponential
polynomial $a(m)=\sum_{i=1}^{r}A_{i}(m)\alpha_{i}^{m}$, $A_{i}\in\overline{K}[T]$, $\a_{i}\in\overline{K}^{*}$,
and a polynomial $R\in{\bf k}[T]$ such that $b(m)=R(m)a(m)^{d}$
for all $m\in\mathbb{N}$. \end{theorem}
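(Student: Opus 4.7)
The plan is to combine the function-field GCD estimate developed later in this paper with the Pasten--Wang solution of B\"uchi's $d$-th power problem for function fields. The conclusion reduces to (a) a combinatorial match of exponents, identifying the $\a_j$ from the $\b_i$, and (b) a B\"uchi-type rigidity statement for the residual scalar polynomial factor $R$.

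First, I would exploit the hypothesis $\G\cap\mathbf k^*=\{1\}$, which forces the monomials $\b_i^n$ to be genuinely distinguishable at suitable valuations of $K$ and pins down a dominant term of $b(n)$ at each place. Let $N\subseteq\NN$ be the infinite set on which $b(n)=y_n^d$ with $y_n\in K$. For $n\in N$ and $i_0$ indexing the dominant term, I would rewrite
\[
y_n^d-B_{i_0}(n)\b_{i_0}^n=\sum_{i\ne i_0}B_i(n)\b_i^n
\]
and factor the left side over $\overline K$ as $\prod_{\z^d=1}(y_n-\z\g_n)$ with $\g_n^d$ matching $B_{i_0}(n)\b_{i_0}^n$ up to a controllable factor. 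Applying the function-field GCD estimate to pairs among the $(y_n-\z\g_n)$ and the monomials $B_i(n)\b_i^n$, one learns that generic such pairs have GCD of bounded degree, so the identity on the infinite set $N$ forces non-trivial multiplicative relations among the $\b_i$, producing $\a_j\in\overline K^*$ together with non-negative integers $e_{ij}$ with $\sum_j e_{ij}=d$ such that $\b_i=\prod_j\a_j^{e_{ij}}$.

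With the $\a_j$ determined, I would match the multinomial expansion of $\bigl(\sum_j A_j(m)\a_j^m\bigr)^d$ against $b(m)$; this pins down the coefficients $A_j(m)\in\overline K[T]$ up to a common scalar polynomial factor $R(m)$. Since $a(n)^d$ is already a $d$-th power, the hypothesis that $b(n)=R(n)a(n)^d$ is a $d$-th power in $K$ for infinitely many $n$ forces $R(n)$ itself to be a $d$-th power for infinitely many $n$; Pasten--Wang's theorem on B\"uchi's $d$-th power problem then constrains $R$ to have the claimed form. The descent of $R$ to $\mathbf k[T]$ rather than $K[T]$ is enforced again by $\G\cap\mathbf k^*=\{1\}$: any non-constant $K$-coefficient of $R$ would introduce divisors incompatible with the $d$-divisibility of $\dv b(n)$ along with those of the $\b_i^n$.

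The main obstacle will be the GCD step: quantitatively controlling the sub-dominant sum $\sum_{i\ne i_0}B_i(n)\b_i^n$ so as to force it to meet the structural requirements of the factored left side, and extracting from this the desired multiplicative relations among the $\b_i$. Secondary technicalities include managing the extension $K\subseteq\overline K$ in which the $\a_j$ live, and deducing that the identity $b(m)=R(m)a(m)^d$ extends from $m\in N$ to all $m\in\NN$; the latter should follow from the uniqueness of the exponential-polynomial representation once the multiplicative group of bases is fixed.
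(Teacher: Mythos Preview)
Your proposal names the right two ingredients (the GCD estimate and Pasten--Wang), but the execution of the GCD step is off in a way that seems fatal.

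First, the ``dominant term'' manoeuvre is borrowed from the number-field argument and does not transplant to function fields: there is no archimedean place, so there is no global dominant $\b_{i_0}$. Saying that each place picks its own dominant index does not help, since your factorisation $y_n^d-B_{i_0}(n)\b_{i_0}^n=\prod_{\z^d=1}(y_n-\z\g_n)$ is a global identity that cannot depend on the place. Moreover this factorisation presupposes that $B_{i_0}(n)\b_{i_0}^n$ is a $d$-th power in $\overline K$, and the resulting $y_n-\z\g_n$ are not of the shape to which the paper's GCD theorem applies: that theorem bounds $N_{S,\mathrm{gcd}}(F(\mathbf g),G(\mathbf g))$ for \emph{coprime polynomials} $F,G$ evaluated at a tuple of $S$-\emph{units}, whereas $y_n$ is not an $S$-unit and is not given as a polynomial in fixed $S$-units. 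Finally, the logical direction is reversed: the hypothesis $\G\cap\mathbf k^*=\{1\}$ is exactly what \emph{excludes} nontrivial multiplicative relations among a basis of $\G$, so the dichotomy in the GCD theorem lands you in the small-GCD branch, not in the relation branch you try to use. Your step ``the identity on $N$ forces non-trivial multiplicative relations among the $\b_i$'' therefore has no content.

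What the paper actually does is quite different and avoids all of this. One chooses a basis $u_1,\dots,u_n$ of $\G$ (so the $u_j$ are multiplicatively independent modulo $\mathbf k$) and writes $b(m)=f(m,u_1^m,\dots,u_n^m)$ for a single polynomial $f\in K[x_0,x_1,\dots,x_n]$. The GCD estimate is used \emph{not} on $y_n$ but, via Theorem~\ref{thm:y^e=00003D00003D00003D00003D00003DFg^l}, on $f(m,\bullet)\in K[x_1,\dots,x_n]$: since the $u_j$ admit no nontrivial relation modulo $\mathbf k$, that theorem forces $f(m,\bullet)=\a_m\mathbf x^{\mathbf i_m}G_m^d$ for all large $m$ in the infinite set. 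A generic-coprimality argument (Proposition~\ref{moving:Prop}) then lifts these specialisation factorisations to a global one $f(x_0,\mathbf x)=Q(x_0)\mathbf x^{\mathbf i}G(x_0,\mathbf x)^d$. Only the residual one-variable factor $Q$ remains, and Pasten--Wang forces its $d$-th-power-free monic part to lie in $\mathbf k[x_0]$, giving $R$. Note in particular that the $\a_j$ in the conclusion are \emph{not} produced by multiplicative relations among the $\b_i$; they come from evaluating $G(m,u_1^m,\dots,u_n^m)$ and from a single $d$-th root $\g_2$ of the fixed monomial $u_1^{i_1}\cdots u_n^{i_n}$. Your matching-of-multinomial-coefficients step and your argument for descending $R$ to $\mathbf k[T]$ are therefore aiming at the wrong targets.
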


\begin{remark} The assumption that $b(n)$ is a $d$-th power in
$K$ for infinitely any $n\in\NN$ is weaker than the one in Theorem
\ref{thm:Pisot's_Conj}. 
We refer to \cite{CZ1998} for a result over number field under both
this weaker assumption and the existence of a dominant $\beta_{i}$,
i.e. a unique $\beta_{i}$ of maximal or minimal absolute value. \end{remark}

\begin{remark} \label{rem: tor}

In the notation of Theorem \ref{dPisot}, it is standard to notice
that $b$ consists of the $q$ disjoint subsequences $b_{j}$, defined
by $n\mapsto b(j+qn)$, where $j\in\{0,\ldots,q-1\}$ and $q$ is
the order of the torsion subgroup of $\Gamma$; moreover, each $b_{j}$
is an exponential polynomial whose associated $\Gamma_{j}$ is torsion-free.
With this observation, we may generalize Theorem \ref{dPisot} so
that the assumption $\Gamma\cap{\bf k}^{*}=\{1\}$ is relaxed to that
$\Gamma\cap{\bf k}^{*}$ is finite and the conclusion only holds for
some $b_{j}$ rather than $b$. 

\end{remark}

\begin{remark} \label{rem: rk1}

In the case where $b$ is simple, i.e., each $B_{i}$ is constant,
we can relax the hypothesis on $\Gamma$ in Theorem \ref{dPisot}
so that the case where $\Gamma\cap{\bf k}^{*}$ is infinitely cyclic
generated by $\gamma$ can be also treated. In this new case, modifying
slightly our proof of Theorem \ref{dPisot}, we can conclude 
that $b(m)=c(m)a(m)^{d}$ for all $m\in\mathbb{N}$, where $c$ is
a simple exponential polynomial over $\mathbf{k}$ given by $m\mapsto\sum_{i=1}^{r}c_{i}\gamma^{e_{i}m}$
for some $c_{i}\in{\bf k}$ and $e_{i}\in\mathbb{Z}$. It seems difficult
to further relax the hypothesis on $\Gamma$.

\end{remark}



Our proof of Theorem \ref{dPisot} contains two major ingredients,
both rely on the special features of function fields of characteristic
zero. One of the ingredients is the result (restated as Theorem \ref{buchi_func_field}
in Section \ref{section4}) by Pasten and Wang \cite{PW2015} motivated
by B{\"u}chi's $d$-th power problem, which has a similar flavor as
Pisot's $d$-th root conjecture but arising from different purposes.
While working on an undecidability problem related to Hilbert's tenth
problem in the 1970s, B{\"u}chi formulated a related arithmetic problem,
which can be stated in more generality as follows: Let $k$ be a number
field. Does there exist an integer $M$ such that the only monic polynomials
$G\in k[T]$ of degree $d$ satisfying that $G(1),\hdots,G(M)$ are
$d$-th powers in $k$, are precisely those of the form $G(X)=(X+c)^{d}$
for some $c\in k$. This problem remains unsolved, while its analogs
have been investigated intensively in the recent years. In particular,
the analog over function fields of characteristic zero was solved
completely, even with an explicit bound on $M$; see \cite{Buchi2013}
and \cite{Pa}. We refer to \cite{PW2015} for a survey of relevant
works. The other ingredient, which is also developed in this paper,
is the function-field analog of the recent work of Levin \cite{Levin:GCD}
for number fields and Levin-Wang \cite{levin2019greatest} for meromorphic
functions on GCD estimates of two multivariable polynomials over function
fields evaluated at  arguments which are $S$-units. We will use the estimates
through the following result, which is of independent interest.

\begin{theorem} \label{thm:y^e=00003D00003D00003D00003D00003DFg^l}
Let $d\ge2$ be an integer and $F\in K[x_{1},\dots,x_{n}]$. Assume
that  $F$ can not be expressed   as $a\mathbf{x}^{\mathbf{i}}G^{d}$ for any $a\in K^{*}$,
any monomial $\mathbf{x}^{\mathbf{i}}\in K[x_{1},\dots,x_{n}]$, and
any $G\in K[x_{1},\dots,x_{n}]$. Then we have the following conclusion:
For any $u_{1},\hdots,u_{n}\in\mathcal{O}_{S}^{*}$, there exists
positive integer $m$ and rationals $c_{1}$ and $c_{2}$ 
all depending only on $\left(d,n,\deg F,\max_{1\le j\le n}h(u_{j})\right)$,
such that if $F(u_{1}^{\ell},\ldots,u_{n}^{\ell})$ is a $d$-th power
in $K$ with some $\ell\ge c_{1}\tilde{h}(F)+c_{2}\max\{1,2\gen-2+|S|\}$,
then $u_{1}^{m_{1}}\cdots u_{n}^{m_{n}}\in{\bf k}$ for some $(m_{1},\hdots, m_{n})\in\mathbb{Z}^{n}\setminus\{(0,\ldots,0)\}$
with $\sum|m_{i}|\le2m$.
\end{theorem}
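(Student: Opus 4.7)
The plan is to argue by contrapositive: assume that no relation $u_{1}^{m_{1}}\cdots u_{n}^{m_{n}}\in\mathbf{k}^{*}$ holds with $(m_{1},\ldots,m_{n})\neq 0$ and $\sum|m_{i}|\le 2m$, for an integer $m=m(d,n,\deg F,\max_{j}h(u_{j}))$ to be chosen, and then show that $F(u_{1}^{\ell},\ldots,u_{n}^{\ell})$ cannot be a $d$-th power in $K$ as soon as $\ell>c_{1}\tilde{h}(F)+c_{2}\max\{1,2\gen-2+|S|\}$. The first step is a normal-form reduction: factor $F=a\mathbf{x}^{\mathbf{j}}F_{1}F_{2}^{d}$ with $a\in K^{*}$, $\mathbf{x}^{\mathbf{j}}$ a monomial, $F_{1}\in K[x_{1},\ldots,x_{n}]$ coprime to each coordinate and $d$-th-power-free, and $F_{2}\in K[x_{1},\ldots,x_{n}]$. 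The exclusion of the form $a'\mathbf{x}^{\mathbf{i}}G^{d}$ forces $F_{1}$ to be nonconstant, and the $d$-th power hypothesis on $F(u_{1}^{\ell},\ldots,u_{n}^{\ell})$ reduces to one on $a\mathbf{u}^{\ell\mathbf{j}}F_{1}(u_{1}^{\ell},\ldots,u_{n}^{\ell})$.

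Next I invoke the function-field GCD estimate for multivariable polynomials evaluated at $S$-units, whose development is one of the advertised contributions of the paper. Under the no-small-monomial-relation assumption, this estimate controls the contribution to the divisor of $F_{1}(u_{1}^{\ell},\ldots,u_{n}^{\ell})$ coming from places at which some $u_{i}$ has a zero or pole, bounding it by a quantity small compared with $\ell\max_{j}h(u_{j})$, plus an error linear in $\tilde{h}(F)$ and in $\max\{1,2\gen-2+|S|\}$. The upshot is that, up to controlled error, the divisor of $F_{1}(u_{1}^{\ell},\ldots,u_{n}^{\ell})$ is concentrated at places $v$ outside $S$ at which each $u_{i}$ is a local unit and $F_{1}$ specializes to zero.

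At such generic places, because $F_{1}$ is $d$-th-power-free and coprime to each coordinate, a Mason--Stothers / truncated-counting-function argument on $C$ detects simple zeros of $F_{1}(u_{1}^{\ell},\ldots,u_{n}^{\ell})$ arising from the transverse intersection of the ``image curve'' $v\mapsto(u_{1}^{\ell}(v),\ldots,u_{n}^{\ell}(v))$ with the hypersurface $\{F_{1}=0\}\subset\mathbb{G}_{m}^{n}$. For $a\mathbf{u}^{\ell\mathbf{j}}F_{1}(u_{1}^{\ell},\ldots,u_{n}^{\ell})$ to be a $d$-th power, each such simple zero must be absorbed by the monomial factor $a\mathbf{u}^{\ell\mathbf{j}}$; but the GCD estimate bounds the number of places at which that can happen by $O(\tilde{h}(F)+\max\{1,2\gen-2+|S|\})$. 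Once $\ell$ exceeds $c_{1}\tilde{h}(F)+c_{2}\max\{1,2\gen-2+|S|\}$ for suitable $c_{1},c_{2}$, there remain simple zeros that cannot be absorbed, contradicting divisibility of orders by $d$.

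The main obstacle will be arranging that the constants $m$, $c_{1}$, $c_{2}$ depend only on $(d,n,\deg F,\max_{j}h(u_{j}))$ and not on $\ell$ or on $\tilde{h}(F)$ itself. This requires the function-field GCD estimate to come equipped with a uniform support parameter $m$ for the forbidden monomial relations, and a careful distillation of its error terms into the asserted linear form in $\tilde{h}(F)$ and $\max\{1,2\gen-2+|S|\}$; establishing the GCD estimate in this explicit effective form, and verifying that the Mason--Stothers step really produces unabsorbable simple zeros in quantity linear in $\ell$, is therefore the technical crux of the argument.
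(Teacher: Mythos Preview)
Your initial reduction is right, but the core mechanism is miscast in two places. You describe the GCD estimate as bounding the divisor of $F_1(\mathbf{u}^\ell)$ at places in $S$, and then invoke a ``Mason--Stothers'' step to produce many simple zeros of $F_1(\mathbf{u}^\ell)$ outside $S$; neither holds as stated, and in fact the opposite conclusion is what is needed. The paper's device is a derivative polynomial $D_{\mathbf{u}}(F_1)\in K[\mathbf{x}]$ with $F_1(\mathbf{u})'=D_{\mathbf{u}}(F_1)(\mathbf{u})$: writing $F_1=\prod P_i^{e_i}$ with each $e_i<d$, one checks that $\bar F_1:=\prod P_i$ and $F_{\mathbf{e},\mathbf{u}}:=\sum_i e_i D_{\mathbf{u}}(P_i)\prod_{j\ne i}P_j$ are coprime (or a small multiplicative relation among the $u_j$ already holds), and then the GCD theorem applied to \emph{this} pair, together with the elementary bound $N_{S,\gcd}(g^d,(g^d)')\ge\tfrac{d-1}{d}N_S(g^d)-3\mathfrak g$, forces $N_S(F_1(\mathbf{u}^\ell))\le\varepsilon\,\ell\max_jh(u_j)$. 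So $F_1(\mathbf{u}^\ell)$ has \emph{few} zeros outside $S$, not many simple ones, and your ``unabsorbable simple zeros'' contradiction never materializes. You also omit the passage to a degree-$\le d$ extension $L/K$ (with controlled genus) that makes $F_1(\mathbf{u}^\ell)$ a literal $d$-th power; this is needed for the derivative comparison, since your absorption reasoning only gives that orders are divisible by $d$ off a small set.

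The second gap is that even after $N_S(F_1(\mathbf{u}^\ell))$ is shown small, one is not done: nothing yet produces a multiplicative relation among the $u_j$. The missing step is a unit-equation argument. One writes $F_1(\mathbf{u}^\ell)=\sum_{\mathbf{i}\in I_{F_1}}a_{\mathbf{i}}\mathbf{u}^{\mathbf{i}\ell}$, enlarges $S$ by the (few) zeros of $F_1(\mathbf{u}^\ell)$ to a set $S'$ with $|S'|\le|S|+O(\tilde h(F))+\varepsilon\ell$ so that every term and $F_1(\mathbf{u}^\ell)$ itself are $S'$-units, and applies Brownawell--Masser (or its Green-type corollary if a subsum vanishes). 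This gives $h(\mathbf{u}^{(\mathbf{i}-\mathbf{i}_0)\ell})\le\tfrac{|I_{F_1}|^2}{2}(2\mathfrak g-2+|S'|)+h(F_1)$; choosing $\varepsilon$ small relative to $|I_{F_1}|\le\binom{n+\deg F}{n}$ turns this into $\ell\le\tfrac12\ell+O(\tilde h(F))+O(\max\{1,2\mathfrak g-2+|S|\})$ unless some $\mathbf{u}^{\mathbf{i}-\mathbf{i}_0}\in\mathbf{k}$, which is exactly the asserted conclusion. This Brownawell--Masser step is absent from your outline and cannot be replaced by the simple-zero counting you describe.
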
 Here $\tilde{h}(F)$ is the
relevant height of $F$ to be defined in the next section.

{\begin{remark} We cannot drop $a$ in the assumption
of Theorem \ref{thm:y^e=00003D00003D00003D00003D00003DFg^l}. For
example, if $a=u_{1}\in\mathcal{O}_{S}^{*}$ and $F(x_{1}):=ax_{1}$,
we always have that $F(u_{1}^{d\ell-1})$ is a $d$-th power in $K$
for all $\ell\in\mathbb{N}$.\end{remark}}

The assumption in Theorem \ref{dPisot} that $\Gamma\cap\mathbf{k}^{*}$
is trivial implies that every minimal set of generators of $\Gamma$
is multiplicatively independent modulo ${\bf k}$. This suggests how
Theorem \ref{thm:y^e=00003D00003D00003D00003D00003DFg^l} plays a
role in our proof of Theorem \ref{dPisot}.

We briefly describe the core idea connecting GCD estimates and our proof
of Theorem \ref{thm:y^e=00003D00003D00003D00003D00003DFg^l}, as introduced
by Corvaja-Zannier \cite{CZ2008}.    After
some reduction, we only need to treat the case where $F$ is $d$-th
power free. Given   a tuple $(u_{1},\hdots,u_{n},y)\in(\mathcal{O}_{S}^{*})^{n}\times K$
satisfies that   $y^{d}=F(u_{1},\ldots,u_{n})$, we
will construct a polynomial $G\in K[x_{1},\dots,x_{n}]$ with controllable
height, depending on  $F$  and the $\frac{u_{i}'}{u_{i}}$, such that $(y^{d})'=G(u_{1},\ldots,u_{n})$, where
$'$ denotes a global derivation on $K$. For example, if
$F:=x_{1}^{2}+\cdots+x_{n}^{2}$, then our construction will yield
$G:=2\frac{u_{1}'}{u_{1}}x_{1}^{2}+\hdots+2\frac{u_{n}'}{u_{n}}x_{n}^{2}.$
As $d\ge2$, the number of common zeros of $y^{d}$ and 
$(y^{d})'$ is essentially larger than the number of zeros of $y^{d-1}$.
 On the other hand, we expect
the number of common zeros of $F(u_{1},\ldots,u_{n})$ and $G(u_{1},\ldots,u_{n})$
to be essentially smaller than the number of zeros of $y^{d-1}$ unless something special
happens. To formalize this idea, we prove the following result on
GCD estimates, where all notation involved are defined in Section
\ref{sec:Preliminaries}. 

\begin{theorem} \label{movinggcdunit}
Let $S\subset C$ be a finite set of points. Let $F,\,G\in K[x_{1},\dots,x_{n}]$
be a coprime pair of nonconstant polynomials. For any $\epsilon>0$,
there exist an integer $m$, positive reals $c_{i}$, $0\le i\le4$,
all depending only on $\epsilon$, such that for all $n$-tuple $(g_{1},\hdots,g_{n})\in({\cal O}_{S}^{*})^{n}$
with 
\[
\max_{1\le i\le n}h(g_{i})\ge c_{1}(\tilde{h}(F)+\tilde{h}(G))+c_{2}\max\{0,2\gen-2+|S|\},
\]
we have that either 
\begin{align}
h(g_{1}^{m_{1}}\cdots g_{n}^{m_{n}})\le c_{3}(\tilde{h}(F)+\tilde{h}(G))+c_{4}\max\{0,2\gen-2+|S|\}\label{multiheight11}
\end{align}
holds for some integers $m_{1},\hdots,m_{n}$, not all zeros with $\sum|m_{i}|\le2m$,
or the following two statements hold.

\global\long\def\labelenumi{(\theenumi)}%
\global\long\def\theenumi{\alph{enumi}}%

\begin{enumerate}[label=(\alph*)] 
\item[{\rm(a)}] \label{enu:;Nsgcd<} $N_{S,{\rm gcd}}(F(g_{1},\hdots,g_{n}),G(g_{1},\hdots,g_{n}))\le\epsilon\max_{1\le i\le n}h(g_{i})$; 
\item[{\rm(b)}] \label{enu: hgcd<} $h_{{\rm gcd}}(F(g_{1},\hdots,g_{n}),G(g_{1},\hdots,g_{n}))\le\epsilon\max_{1\le i\le n}h(g_{i})$
if we further assume that not both of $F$ and $G$ vanish at $(0,\hdots,0)$. 
\end{enumerate}
\end{theorem}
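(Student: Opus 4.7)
The plan is to follow the strategy of Levin \cite{Levin:GCD} and Levin-Wang \cite{levin2019greatest}, substituting Schmidt's subspace theorem over number fields with a function-field analog (for instance Wang's function-field subspace theorem, or a Ru-Vojta type inequality for function fields on $\mathbb{G}_{m}^{n}$). First I would reinterpret $N_{S,\mathrm{gcd}}(F(g),G(g))$ geometrically: viewing $(g_{1},\ldots,g_{n})$ as a morphism $\phi\colon C\to\mathbb{G}_{m}^{n}\subset\mathbb{P}^{n}$, each $p\in C\setminus S$ at which $F(g)$ and $G(g)$ simultaneously vanish contributes (up to truncation) the intersection multiplicity of $\phi(p)$ with the codimension-two subscheme $W:=Z(F)\cap Z(G)$, which is well-defined because $F$ and $G$ are coprime. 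For part (b), the additional hypothesis that not both $F$ and $G$ vanish at the origin controls the behavior of $F(g)$ and $G(g)$ at the places where some $g_{i}$ has a zero or pole, so that the counting argument extends to $h_{\mathrm{gcd}}$.

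Next I would blow up $\mathbb{P}^{n}$ along the closure of $W$ to obtain $\pi\colon\tilde{X}\to\mathbb{P}^{n}$ with exceptional divisor $E$, and lift $\phi$ to $\tilde{\phi}\colon C\to\tilde{X}$. Then $N_{S,\mathrm{gcd}}(F(g),G(g))$ is essentially captured by $\deg\tilde{\phi}^{*}E$. A neighborhood of $E$ in $\tilde{X}$ can be covered by finitely many effective divisors of the form $\pi^{*}H_{j}-E$ for hypersurfaces $H_{j}\subset\mathbb{P}^{n}$ whose degrees are bounded in terms of $\deg F$ and $\deg G$ alone. Applying the function-field Ru-Vojta / Schmidt subspace inequality to $\phi$ and well-chosen families of such $H_{j}$ in general position, and optimizing over the choice, delivers the target bound $N_{S,\mathrm{gcd}}(F(g),G(g))\le\epsilon\max_{i}h(g_{i})$ in the \emph{generic} case, with error constants that split cleanly into a $\tilde{h}(F)+\tilde{h}(G)$ term and a $\max\{0,2\gen-2+|S|\}$ term.

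The exceptional case of the subspace theorem forces $\phi(C)$ into a finite union of proper linear subvarieties of $\mathbb{P}^{n}$, which on $\mathbb{G}_{m}^{n}$ translates to a containment in a translate of a proper algebraic subgroup. This yields a nontrivial monomial relation $g_{1}^{m_{1}}\cdots g_{n}^{m_{n}}=c$ with $h(c)$ controlled by $\tilde{h}(F)+\tilde{h}(G)$ and $\max\{0,2\gen-2+|S|\}$, which is precisely (\ref{multiheight11}); the integer vector $(m_{1},\ldots,m_{n})$ can be chosen with $\sum|m_{i}|\le 2m$ for some $m$ depending only on $\epsilon$ (and implicitly on the fixed $n$, $\deg F$, $\deg G$). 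The main obstacle is the quantitative bookkeeping: one must isolate the constants so that $m$ depends only on $\epsilon$ while the $c_{i}$ separately encode the dependence on $\tilde{h}(F)+\tilde{h}(G)$ and on $2\gen-2+|S|$. This demands a version of the function-field subspace theorem with explicit, effective constants, combined with a careful engineering of the auxiliary hypersurfaces $H_{j}$ used to resolve $E$; that is the most delicate part of the argument, and is where the analog of the Levin / Levin-Wang estimates needs to be worked out afresh in the function-field language.
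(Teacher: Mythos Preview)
Your blow-up/Ru--Vojta framing is the geometric version of what the paper actually does: the paper works concretely with a basis $\phi_{1},\ldots,\phi_{M}$ of the $K$-vector space $(F,G)_{m}$ (these are your sections of $\pi^{*}\mathcal{O}(m)-E$) and applies Wang's \emph{moving-target} subspace theorem (Theorem~\ref{MSMT}) to $\Phi(\mathbf{g})=(\phi_{1}(\mathbf{g}),\ldots,\phi_{M}(\mathbf{g}))$; Theorem~\ref{Refinement} is precisely the inequality you sketch, and Theorem~\ref{=00003D00003D00003D000024S=00003D00003D00003D000024 part} supplies the extra $S$-part estimate needed for (b). One point you gloss over: since $F,G\in K[\mathbf{x}]$ rather than ${\bf k}[\mathbf{x}]$, the auxiliary linear forms have coefficients in $K$, so a moving-target version of the subspace theorem is essential, and the degeneracy condition becomes ``the $\mathbf{g}^{\mathbf{i}}$ with $|\mathbf{i}|\le m$ are linearly dependent over the finite-dimensional ${\bf k}$-space $V_{F,G}(Mr+1)$ generated by monomials in the coefficients of $F$ and $G$''.

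Your handling of the degenerate case, however, contains a genuine gap. The exceptional alternative of the subspace theorem does \emph{not} force $\phi(C)$ into a translate of a proper algebraic subgroup of $\mathbb{G}_{m}^{n}$. After passing to the degree-$m$ Veronese, a linear relation among the $\mathbf{g}^{\mathbf{i}}$ only says that $\phi(C)$ lies on some degree-$m$ hypersurface in $\mathbb{P}^{n}$, which is almost never a subtorus coset; no monomial relation $g_{1}^{m_{1}}\cdots g_{n}^{m_{n}}=c$ drops out directly. What you actually obtain is an $S'$-unit equation $\sum_{\mathbf{i}}\alpha_{\mathbf{i}}\mathbf{g}^{\mathbf{i}}=0$ with $\alpha_{\mathbf{i}}\in V_{F,G}(Mr+1)$. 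The paper enlarges $S$ to $S'$ by adjoining the zeros and poles of the $\alpha_{\mathbf{i}}$ (inflating $|S|$ by $O\big((\tilde{h}(F)+\tilde{h}(G))\big)$) and then applies Brownawell--Masser (Theorem~\ref{BrMa}) to a minimal nonvanishing subsum to bound $h(\mathbf{g}^{\mathbf{i}-\mathbf{i}_{0}})$; this is how \eqref{multiheight11} is obtained, with the exponent vector $\mathbf{i}-\mathbf{i}_{0}$ automatically satisfying $\sum|m_{j}|\le 2m$. Without this unit-equation step the alternative in the theorem cannot be established.
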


\begin{remark} In Theorem \ref{movinggcdunit}, all the quantities
claimed to exist can be given effectively. Moreover, the explicit
bounds on heights are important in our application. As said earlier,
if we are given $F=x_{1}^{2}+\cdots+x_{n}^{2}$ in Theorem \ref{thm:y^e=00003D00003D00003D00003D00003DFg^l},
then in the main step of the proof, we construct $G_{\mathbf{u}}:=2\frac{u_{1}'}{u_{1}}x_{1}^{2}+\hdots+2\frac{u_{n}'}{u_{n}}x_{n}^{2}$
for each tuple $\mathbf{u}:=(u_{1},\hdots,u_{n})\in({\cal O}_{S}^{*})^{n}$
and apply Theorem \ref{movinggcdunit} to estimate the GCD of $F(\mathbf{u})$
and $G_{\mathbf{u}}(\mathbf{u})$. The main point which makes the
proof of Theorem \ref{thm:y^e=00003D00003D00003D00003D00003DFg^l}
works is that $\tilde{h}(G_{{\bf u}})$ can be explicitly bounded
independent of these $\mathbf{u}$. (See Proposition \ref{heightDu}.)
\end{remark}


It is more desirable to obtain GCD estimates, such as Statement (a)
and (b) in Theorem \ref{movinggcdunit}, under the assumption
that $g_{1},\hdots,g_{n}$ are multiplicatively independent modulo
${\bf k}$. As a result in this direction, we can actually replace
the right hand side of \eqref{multiheight11} by $0$ in the case
where $n=2$ and the coefficients of $F$ and $G$ are in ${\bf k}$.
We include a complete statement below. Although this result can be
deduced from \cite[Corollary 2.3]{CZ2008}, we will derive it from
our proof of Theorem \ref{movinggcdunit}. \begin{theorem} \label{n=00003D00003D00003D00003D00003D00003D2gcdunit}
Let $F,\,G\in{\bf k}[x_{1},x_{2}]$ be nonconstant coprime polynomials.
For any $\epsilon>0$, there exist an integer $m$, constant $c$,
both depending only on $\epsilon$, such that for all pairs $(g_{1},g_{2})\in({\cal O}_{S}^{*})^{2}$
with $\max\{h(g_{1}),h(g_{2})\}\ge c\,\max\{1,2\gen-2+|S|\}$, either we have that
$g_{1}^{m_{1}}g_{2}^{m_{2}}\in{\bf k}$ holds for some integers $m_{1},m_{2}$,
not all zeros with $|m_{1}|+|m_{2}|\le2m$, or the following two statements
hold 

\global\long\def\labelenumi{(\theenumi)}%
\global\long\def\theenumi{\alph{enumi}}%

\begin{enumerate} 
\item[{\rm(a)}] $N_{S,{\rm gcd}}(F(g_{1},g_{2}),G(g_{1},g_{2}))\le\epsilon\max\{h(g_{1}),h(g_{2})\}$; 
\item[{\rm(b)}] $h_{{\rm gcd}}(F(g_{1},g_{2}),G(g_{1},g_{2}))\le\epsilon\max\{h(g_{1}),h(g_{2})\}$,
if we further assume that not both of $F$ and $G$ vanish at $(0,0)$. 
\end{enumerate}
\end{theorem}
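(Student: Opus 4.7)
The plan is to extract this strengthening from the internal structure of the proof of Theorem \ref{movinggcdunit}, specialized to $n=2$ and to coefficients in $\mathbf{k}$, rather than to apply its statement as a black box.

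First, since the coefficients of $F$ and $G$ lie in $\mathbf{k}$, we have $\tilde{h}(F) = \tilde{h}(G) = 0$. Consequently, the height hypothesis of Theorem \ref{movinggcdunit} collapses to $\max\{h(g_{1}),h(g_{2})\} \ge c_2\max\{0,2\gen-2+|S|\}$, which is implied by the hypothesis $\max\{h(g_{1}),h(g_{2})\} \ge c\,\max\{1,2\gen-2+|S|\}$ once $c$ is chosen suitably. Thus the dichotomy of Theorem \ref{movinggcdunit} applies: either the GCD estimates (a) and (b) already hold, or there exist integers $m_1,m_2$, not both zero with $|m_1|+|m_2|\le 2m$, satisfying $h(g_1^{m_1}g_2^{m_2}) \le c_4\max\{0,2\gen-2+|S|\}$. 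What remains is to upgrade this bounded-height conclusion to the sharper statement $g_1^{m_1}g_2^{m_2}\in\mathbf{k}$.

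For the upgrade, the plan is to revisit the degenerate alternative inside the proof of Theorem \ref{movinggcdunit}. That alternative is produced by applying a function-field subspace theorem to a carefully chosen family of hyperplanes associated to $F$ and $G$; the output is a non-trivial linear dependence among monomials $g_1^{i_1}g_2^{i_2}$ with coefficients determined by the coefficients of $F$ and $G$. When $F,G\in \mathbf{k}[x_1,x_2]$, every such coefficient lies in $\mathbf{k}$, so the resulting relation is actually an $S$-unit equation defined over $\mathbf{k}$. Passing to a minimal vanishing subsum, the standard structure theorem for $S$-unit equations over function fields forces a two-term relation $c_1 g_1^{i_1}g_2^{j_1} + c_2 g_1^{i_2}g_2^{j_2} = 0$ with $c_1,c_2\in \mathbf{k}^{*}$. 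Setting $m_1=i_1-i_2$ and $m_2=j_1-j_2$ gives $g_1^{m_1}g_2^{m_2}=-c_2/c_1\in\mathbf{k}^{*}$, and the bound $|m_1|+|m_2|\le 2m$ is inherited from the dimension bound that controlled the subspace-theorem step.

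The main obstacle is the last move: to verify that the proof of Theorem \ref{movinggcdunit} indeed produces the degenerate alternative via a $\mathbf{k}$-linear dependence whenever $F,G$ are defined over $\mathbf{k}$, and that the minimal-subsum reduction yields a relation of length two rather than merely a relation with small height. Once this descent is formalized, the bounds on $m$ and $c$ transfer from Theorem \ref{movinggcdunit} with $\tilde{h}(F)=\tilde{h}(G)=0$, and the quantitative parts of the present theorem follow without further work.
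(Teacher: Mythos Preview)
Your overall strategy is right: specialize the proof of Theorem \ref{movinggcdunit} to $n=2$ with coefficients in $\mathbf{k}$, so that $V_{F,G}(r)=\mathbf{k}$ and the degenerate alternative becomes a genuine $\mathbf{k}$-linear dependence among the monomials $g_1^{i_1}g_2^{i_2}$ with $i_1+i_2\le m$. That is exactly how the paper proceeds.

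The gap is in your handling of that dependence. You assert that ``passing to a minimal vanishing subsum, the standard structure theorem for $S$-unit equations over function fields forces a two-term relation''. This is not true: a minimal vanishing subsum of $S$-units with $\mathbf{k}$-coefficients can perfectly well have three or more terms (for instance $g_1+g_2-1=0$). Brownawell--Masser does not say a minimal subsum has length two; it says that in a minimal subsum the ratios $g_1^{j_1-j_1'}g_2^{j_2-j_2'}$ have height bounded by a constant times $\max\{0,2\gen-2+|S|\}$. So your final step, as written, does not go through, and you yourself flag it as ``the main obstacle'' without resolving it.

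The paper closes this gap with a rank argument on the exponent lattice. From a minimal subsum one looks at the subgroup $J\subset\mathbb{Z}^2$ generated by the differences of the exponent vectors that occur. If $J$ has rank one, every such difference is an integer multiple of a single $(m_1,m_2)$ with $|m_1|+|m_2|\le 2m$, and the relation becomes a nontrivial polynomial in $g_1^{m_1}g_2^{m_2}$ with coefficients in $\mathbf{k}$, forcing $g_1^{m_1}g_2^{m_2}\in\mathbf{k}$. If $J$ has rank two, one picks two $\mathbb{Q}$-independent exponent differences, applies Brownawell--Masser to bound both $h(g_1^{j_1}g_2^{j_2})$ and $h(g_1^{j_1'}g_2^{j_2'})$, and then uses the $2\times 2$ invertibility to bound $h(g_1)$ and $h(g_2)$ separately by $c\max\{0,2\gen-2+|S|\}$; this contradicts the height hypothesis. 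This dichotomy is precisely what is missing from your proposal and is where the specialization to $n=2$ is genuinely used.
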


As another result in the same direction, we obtain easily from Theorem
\ref{movinggcdunit} that an \emph{effectively} asymptotic version
of Statement (a) and (b) in Theorem \ref{movinggcdunit}
holds, \emph{merely assuming that }$g_{1},\hdots,g_{n}$ are multiplicatively
independent modulo ${\bf k}$; here the effectivity means that we
have an effective lower bound for $\ell$ in the following statement.
\begin{theorem} \label{movinggcdpower} Let $F,\,G\in\K[x_{1},\dots,x_{n}]$
be nonconstant coprime polynomials. Let $g_{1},\hdots,g_{n}\in\K^{*}$,
not all constant. Then for any $\epsilon>0$, there exist an integer
$m$ and constant $c_{1}$ and $c_{2}$ depending only on $\epsilon$,
such that for each positive integer 
\[
\ell>c_{1}(\tilde{h}(F)+\tilde{h}(G))+c_{2}(\gen+n\max_{1\le i\le n}\{h(g_{i})\}),
\]
either we have $g_{1}^{m_{1}}\cdots g_{n}^{m_{n}}\in{\bf k}$ for
some integers $m_{1},\hdots,m_{n}$, not all zeros with $\sum|m_{i}|\le2m$,
or the following two statements hold.

\global\long\def\labelenumi{(\theenumi)}%
\global\long\def\theenumi{\alph{enumi}}%

\begin{enumerate}[label=(\alph*)]
\item[{\rm(a)}] $N_{S,{\rm gcd}}(F(g_{1}^{\ell},\hdots,g_{n}^{\ell}),G(g_{1}^{\ell},\hdots,g_{n}^{\ell}))\le\epsilon\max_{1\le i\le n}h(g_{i}^{\ell})$; 
\item[{\rm(b)}] $h_{{\rm gcd}}(F(g_{1}^{\ell},\hdots,g_{n}^{\ell}),G(g_{1}^{\ell},\hdots,g_{n}^{\ell}))\le\epsilon\max_{1\le i\le n}h(g_{i}^{\ell})$,
if we further assume that not both of $F$ and $G$ vanish at $(0,\hdots,0)$. 
\end{enumerate}
\end{theorem}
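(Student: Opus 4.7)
The plan is to deduce Theorem \ref{movinggcdpower} directly from Theorem \ref{movinggcdunit} by applying the latter to the tuple $(g_{1}^{\ell},\ldots,g_{n}^{\ell})$ and converting its conclusions back to statements about $(g_{1},\ldots,g_{n})$ via the scaling $h(g_{i}^{\ell})=\ell\,h(g_{i})$. To this end, I would first enlarge the finite set $S$ (if necessary) so that it contains the supports of the zero and pole divisors of every $g_{i}$; since the degree of the pole divisor of $g_{i}$ equals $h(g_{i})$, this enlargement costs at most $2n\max_{i}h(g_{i})$ in $|S|$, giving $\max\{0,2\gen-2+|S|\}\le c(\gen+n\max_{i}h(g_{i}))$ for an absolute constant $c$. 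After this, each $g_{i}^{\ell}\in\mathcal{O}_{S}^{*}$ and we are in position to invoke Theorem \ref{movinggcdunit}.

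Next, I would verify the height hypothesis of Theorem \ref{movinggcdunit} for the new tuple. Because some $g_{i}$ is non-constant and function-field heights on $K$ take non-negative integer values, $\max_{i}h(g_{i})\ge 1$, so $\max_{i}h(g_{i}^{\ell})=\ell\max_{i}h(g_{i})\ge\ell$. Hence the hypothesis
\[
\max_{i}h(g_{i}^{\ell})\ge c_{1}'(\tilde h(F)+\tilde h(G))+c_{2}'\max\{0,2\gen-2+|S|\}
\]
of Theorem \ref{movinggcdunit} is implied by the lower bound on $\ell$ in the statement, after absorbing the $|S|$ contribution into the $c_{2}(\gen+n\max_{i}h(g_{i}))$ term by choice of $c_{1},c_{2}$. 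Theorem \ref{movinggcdunit} then produces the dichotomy: either there exist integers $m_{1},\ldots,m_{n}$, not all zero with $\sum|m_{i}|\le 2m$, satisfying
\[
\ell\, h(g_{1}^{m_{1}}\cdots g_{n}^{m_{n}})=h\bigl((g_{1}^{\ell})^{m_{1}}\cdots(g_{n}^{\ell})^{m_{n}}\bigr)\le c_{3}(\tilde h(F)+\tilde h(G))+c_{4}\max\{0,2\gen-2+|S|\},
\]
or the two GCD estimates (a) and (b) of Theorem \ref{movinggcdunit} hold for $(g_{1}^{\ell},\ldots,g_{n}^{\ell})$, which are precisely the desired (a) and (b) of Theorem \ref{movinggcdpower}.

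In the first alternative, the right-hand side is a quantity independent of $\ell$, and by enlarging $c_{1},c_{2}$ we may assume $\ell$ exceeds it. This forces $h(g_{1}^{m_{1}}\cdots g_{n}^{m_{n}})<1$, and integrality of heights then gives $h(g_{1}^{m_{1}}\cdots g_{n}^{m_{n}})=0$, i.e.\ $g_{1}^{m_{1}}\cdots g_{n}^{m_{n}}\in\mathbf{k}$, as claimed. The only real difficulty is the bookkeeping: combining the bound $|S|\le|S_{0}|+2n\max_{i}h(g_{i})$ with the constants supplied by Theorem \ref{movinggcdunit} to land inside the form $c_{1}(\tilde h(F)+\tilde h(G))+c_{2}(\gen+n\max_{i}h(g_{i}))$ of the statement, and to choose $\ell$ simultaneously large enough to both satisfy the hypothesis of Theorem \ref{movinggcdunit} and to force height-zero on any low-norm multiplicative relation. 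The mathematical content reduces to the observation that after raising to a sufficiently high power, any height-bounded multiplicative relation in $(g_{i}^{\ell})$ becomes an exact relation in $(g_{i})$ modulo $\mathbf{k}$.
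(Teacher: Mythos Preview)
Your proposal is correct and follows essentially the same route as the paper: choose $S$ so that each $g_i\in\mathcal{O}_S^*$, apply Theorem \ref{movinggcdunit} to $(g_1^\ell,\ldots,g_n^\ell)$, and use $h(g_i^\ell)=\ell\,h(g_i)\ge\ell$ together with integrality of heights to convert the height-bounded multiplicative relation into membership in $\mathbf{k}$. One cosmetic point: the paper simply \emph{defines} $S:=\{\p: v_\p(g_i)\ne 0\text{ for some }i\}$ rather than enlarging a pre-existing set, so there is no $|S_0|$ term to absorb and the bound $|S|\le 2n\max_i h(g_i)$ drops out directly; your formulation with an ambient $S_0$ would otherwise leave an $|S_0|$-dependence in $c_1,c_2$ (and proving (a) for the enlarged set does not immediately yield (a) for a smaller one, since $N_{S,\gcd}\ge N_{S',\gcd}$ when $S\subset S'$).
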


\begin{remark} When $F,\,G\in\mathbb{C}[x_{1},\dots,x_{n}]$ be a
coprime pair of nonconstant polynomials and $g_{1},\ldots,g_{n}\in\CC[z]$
are multiplicatively independent modulo $\CC$, then the results in
\cite{levin2019greatest} also imply Statement (a) and (b)  in Theorem \ref{movinggcdpower}. Our statement here is stronger
since we have formulated effective bounds on $\ell$ and the $m_{i}$
such that $g_{1}^{m_{1}}\cdots g_{n}^{m_{n}}\in{\bf k}$. When $n>2$,
the only other previous result in this direction appears to be a result
of Ostafe \cite[Th.~1.3]{Ostafe}, which considers special polynomials
such as $F=x_{1}\cdots x_{r}-1,G=x_{r+1}\cdots x_{n}-1$, but proves
a  uniform bound   in place of Statement (a) and (b)  independent of $\ell$. In the  case where $n=2$,
previous results include the original theorem of Ailon-Rudnick \cite{AR}
in this setting, i.e. $F=x_{1}-1$, $G=x_{2}-1$, and extensions of
Ostafe \cite{Ostafe} and Pakovich and Shparlinski \cite{PS} (all
with uniform bounds). It is noted in \cite{Ostafe} that it appears
to be difficult to extend the techniques used there to obtain results
for general $F$ and $G$. \end{remark}

We collect the background materials in Section \ref{sec:Preliminaries}.
We will prove some main lemmas in Section \ref{mainlemmas}. The proofs
of Theorem \ref{dPisot} and Theorem \ref{thm:y^e=00003D00003D00003D00003D00003DFg^l}
are given in Section \ref{sectionPisot} and Section \ref{section4}
respectively. Finally, we establish the gcd theorems in Section \ref{gcd}.

\section{Preliminaries}

\label{sec:Preliminaries}

Recall that $K$ is the function field of the smooth projective curve
$C$ of genus $\mathfrak{g}$ defined over the algebraically closed
field ${\bf k}$ of characteristic zero. At each point $\p\in C(\mathbf{k})$,
we may choose a uniformizer $t_{\p}$ to define a normalized order
function $v_{\p}:=\ord_{\p}:\K\to\ZZ\cup\{+\infty\}$. 
Let $S\subset C(\mathbf{k})$ be a finite subset. We denote the ring
of $S$-integers in $K$ and the group of $S$-units in $K$ respectively
by 
\[
{\cal O}_{S}:=\{f\in\K\,|\,v_{\p}(f)\ge0\text{ for all }\p\notin S\},
\]
and 
\[
{\cal O}_{S}^{*}:=\{f\in\K\,|\,v_{\p}(f)=0\text{ for all }\p\notin S\}.
\]

For simplicity of notation, for $f\in\K^{*}$ and $\mathbf{p}\in C(\mathbf{k})$
we let 
\[
v_{\p}^{0}(f):=\max\{0,v_{\p}(f)\},\qquad\bar{v}_{\p}^{0}(f):=\min\{1,v_{\p}^{0}(f)\},
\]
i.e. its order of zero at $\p$ and its truncated value; 
\[
v_{\p}^{\infty}(f):=-\min\{0,v_{\p}(f)\},\qquad\bar{v}_{\p}^{\infty}(f):=\min\{1,v_{\p}^{\infty}(f)\},
\]
i.e. its order of pole at $\p$ and its truncated value. The height
of $f$ is defined by 
\[
h(f):=\sum_{\p\in C }-v_{\p}^{\infty}(f),
\]
which counts the number of poles of $f$ with multiplicities. For
any ${\bf f}:=[f_{0}:\cdots:f_{m}]\in\PP^{m}(K)$ with $m\ge1$ and
$f_{0},...,f_{m}\in\K$, we define $v_{\p}(\mathbf{f}):=\min\{v_{\p}(f_{0}),...,v_{\p}(f_{m})\}$
and 
\[
h({\bf f})=h(f_{0},...,f_{m}):=\sum_{\p\in C}-v_{\p}(\mathbf{f}).
\]
For a finite subset $S$ of $C$ and $f\in K^{*}$, we let 
\[
\overline{N}_{S}({f})={\displaystyle \sum_{\mathbf{p}\in C\setminus S}\bar{v}_{\mathbf{p}}^{0}(f)}.
\]
be the cardinality of the set of zeros of $f$ outside $S$; and 
\[
N_{S}({f})=\sum_{\p\notin S}v_{\p}^{0}(f)
\]
be the number of the zero, counting multiplicities, of $f$ outside
of $S$. For any $f,g\in\K,$ we let 
\begin{align*}
N_{S,{\rm gcd}}(f,g):=\sum_{\p\in C\setminus S}\min\{v_{\p}^{0}(f),v_{\p}^{0}(g)\}
\end{align*}
and 
\begin{align*}
h_{{\rm gcd}}(f,g):=\sum_{\p\in C}\min\{v_{\p}^{0}(f),v_{\p}^{0}(g)\}.
\end{align*}

Let $\mathbf{x}:=(x_{1},\ldots,x_{n})$ be a tuple of $n$ variables,
and $F=\sum_{{\bf i}\in I_{F}}a_{{\bf i}}{\bf x}^{{\bf i}}\in K[{\bf x}]$
be a nonzero polynomial, where $I_{F}$ is the (nonempty finite) set
of those indices ${\bf i}=(i_{1},\hdots,i_{n})$ with $a_{{\bf i}}\ne0$;
here, each $i_{j}$ is a nonnegative integer, and we put ${\bf x}^{{\bf i}}:=x_{1}^{i_{1}}\cdots x_{n}^{i_{n}}$.
We define the height $h(F)$ and the relevant height $\tilde{h}(F)$
as follows. Put 
\begin{align*}
v_{\p}(F):=\min_{{\bf i}\in I_{F}}\{v_{\p}(a_{{\bf i}})\}\qquad\text{for }\p\in C.
\end{align*}
and define 
\begin{align*}
h(F):=\sum_{\p\in C}-v_{\p}(F),
\end{align*}
\begin{align*}
\tilde{h}(F):=\sum_{\p\in C}-\min\{0,v_{\p}(F)\}.
\end{align*}
Notice that Gauss's lemma can be stated as 
\begin{align*}
v_{\p}(FG)=v_{\p}(F)+v_{\p}(G), 
\end{align*}
where $F$ and $G$ are in $K[x_{1},\hdots,x_{n}]$ and $\p\in C$.
Consequently, we have that 
\begin{align}
h(FG)=h(F)+h(G).\label{Gaussht}
\end{align}
Although relevant height $\tilde{h}(F)$ is not projectively invariant,
it suits better when comparing with the height of an individual coefficient
of $F$. Indeed, we have from the definitions that 
\begin{align}
h(a_{{\bf i}})\le\tilde{h}(F)\quad\text{and}\quad\tilde{h}(a_{{\bf i}}^{-1}F)=h(a_{{\bf i}}^{-1}F)=h(F)\le\tilde{h}(F),\label{compareheight}
\end{align}
where $a_{{\bf i}}$ is any non-zero coefficient $F$.

We now recall the definitions of global and local derivations on $\K$.
Let $t\in\K\setminus {\bf k}$, which will be fixed later. The mapping
${\displaystyle {g\to\frac{dg}{dt}}}$ on ${\bf k}(t)$, formal differentiation
on ${\bf k}(t)$ with respect to $t$, extends uniquely to a global
derivation on $\K$ as $\K$ is a finite separable extension of ${\bf k}(t)$.
Furthermore, since an element in $\K$ can be written as a Laurent
series in $t_{\p}$, the local derivative of $\eta\in\K$
with respect to $t_{\p}$, denoted by ${\displaystyle {d_{\p}\eta:=\frac{d\eta}{dt_{\p}}}}$,
is given by the formal differentiation on ${\bf k}((t_{\p}))$ with
respect to $t_{\p}$. Consequently, 
\begin{align}
\frac{d\eta}{dt}=d_{\p}\eta\cdot(d_{\p}t)^{-1}.\label{chain rule}
\end{align}




The following results are consequences of the Riemann-Roch Theorem.
We refer to \cite[Corollary 7]{Buchi2013} for a proof. 
\begin{proposition}\label{functiont}
For each point $\p_{\infty}\in C$, we can find some $t\in K\setminus{\bf k}$ satisfying the following conditions:

\global\long\def\labelenumi{(\theenumi)}%
\global\long\def\theenumi{\alph{enumi}}%

\begin{enumerate}[label=(\alph*)]
\item[{\rm(a)}]  $t$ has exactly one pole at $\p_{\infty}$; 
\item[{\rm(b)}]  $h(t)\le\gen+1$; 
\item[{\rm(c)}]  \label{prop12_3} ${\displaystyle {\sum_{\p\in C}v_{\p}^{0}(d_{\p}t)\le3\gen}}$. 
\end{enumerate}
\end{proposition}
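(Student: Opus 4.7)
The plan is to construct $t$ by applying Riemann-Roch to a small multiple of $\p_\infty$, and then verify (c) by relating the local derivative $d_\p t$ to the ramification divisor of the induced cover $t:C\to\PP^1$ via Riemann-Hurwitz.

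First I would apply the Riemann-Roch theorem to the divisor $D := (\gen+1)\p_\infty$, which has degree $\gen+1$. This gives $\ell(D) \geq \deg D + 1 - \gen = 2$. Since the constants already fill one dimension of $L(D)$, there exists a nonconstant $t \in L(D)$. Its pole divisor is then supported only at $\p_\infty$ and is nontrivial (else $t$ is constant), so $t$ has exactly one pole, of multiplicity between $1$ and $\gen+1$. This gives (a), and (b) follows at once since $h(t)$ equals the degree of the pole divisor of $t$.

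Next I would view $t$ as a finite morphism $t:C\to\PP^1$ of degree $d := h(t) \le \gen+1$. Over $\infty$, the unique preimage is $\p_\infty$, with ramification index $e_{\p_\infty} = d$. For every other $\p$, setting $e_\p := v_\p(t-t(\p)) \ge 1$, a local expansion $t - t(\p) = c\, t_\p^{e_\p} + \cdots$ with $c \in \mathbf{k}^*$ gives $d_\p t = c\, e_\p\, t_\p^{e_\p - 1} + \cdots$, so $v_\p(d_\p t) = e_\p - 1 \ge 0$; meanwhile $d_{\p_\infty}t$ has a pole at $\p_\infty$ and contributes $0$ to the sum in (c). Note that characteristic zero, which makes the ramification tame, is precisely what guarantees $c\, e_\p \neq 0$.

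Finally, Riemann-Hurwitz applied to $t:C\to\PP^1$ reads $2\gen - 2 = -2d + \sum_{\p} (e_\p - 1)$. Isolating the contribution $d-1$ coming from $\p_\infty$ yields
\[
\sum_{\p\in C} v_\p^0(d_\p t) \;=\; \sum_{\p\ne \p_\infty}(e_\p - 1) \;=\; (2\gen - 2 + 2d) - (d - 1) \;=\; 2\gen - 1 + d \;\le\; 3\gen,
\]
which is exactly (c). There is no deep obstacle in this plan; the only subtle point is the identification $v_\p(d_\p t) = e_\p - 1$, which drives the bound and relies on tame ramification.
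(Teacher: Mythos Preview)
Your argument is correct and follows exactly the natural route (Riemann--Roch to produce $t\in L((\gen+1)\p_\infty)$, then Riemann--Hurwitz to bound the ramification divisor of $t:C\to\PP^1$); this is the approach the paper alludes to when it says the proposition is a consequence of Riemann--Roch and defers to \cite[Corollary~7]{Buchi2013} rather than giving its own proof. The only delicate step, the identification $v_\p^0(d_\p t)=e_\p-1$ for $\p\ne\p_\infty$ via tame ramification in characteristic zero, is handled correctly.
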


We will use the following result of Brownawell-Masser \cite{BM}.
\begin{theorem}\label{BrMa} Let the characteristic of $\K$ be zero.
If $f_{1},\hdots,f_{n}\in\mathcal{O}_{S}^{*}$ and $f_{1}+\cdots+f_{n}=1$,
then either some proper subsum of $f_{1}+\cdots+f_{n}$ vanishes or
\[
\max_{1\le i\le n}h(f_{i})\le\frac{n(n-1)}{2}\max\{0,2\gen-2+|S|\}.
\]
\end{theorem}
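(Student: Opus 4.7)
The plan is to prove this by induction on $n$, following the original Wronskian argument of Brownawell and Masser \cite{BM}.

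For the base case $n=2$, we apply the function-field Mason--Stothers inequality to the unit equation $f_{1}+f_{2}=1$ with $f_{1},f_{2}\in\mathcal{O}_{S}^{*}$: since all zeros and poles of $f_{1}$, $f_{2}$, and of $f_{1}-1=-f_{2}$ are supported in $S$, Mason's theorem immediately yields $h(f_{i})\le\max\{0,2\gen-2+|S|\}$, which matches the claimed bound since $\binom{2}{2}=1$.

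For the inductive step, assume the theorem holds for all sums of fewer than $n$ terms, and that no proper subsum of $\sum_{i=1}^{n}f_{i}=1$ vanishes. One first reduces to a situation in which $f_{1},\ldots,f_{n-1}$ are linearly independent over $\mathbf{k}$: any nontrivial relation $\sum c_{i}f_{i}=0$ allows one to eliminate one $f_{i}$ from $\sum f_{i}=1$, producing a shorter $S$-unit equation (possibly after rescaling to make the sum equal to $1$) to which the inductive hypothesis applies. Once linear independence is assumed, fix via Proposition \ref{functiont} a function $t\in K$ with a single pole and with $\sum_{\mathbf{p}}v_{\mathbf{p}}^{0}(d_{\mathbf{p}}t)\le3\gen$, and let $d/dt$ be the induced global derivation on $K$. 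The Wronskian
\[
W:=\det\bigl((d/dt)^{j-1}f_{i}\bigr)_{1\le i,j\le n-1}
\]
is then nonzero, and factoring $f_{1}\cdots f_{n-1}$ out of its columns yields $W=(f_{1}\cdots f_{n-1})\cdot\det(L_{j-1}(f_{i}))$, where $L_{k}(f):=(d/dt)^{k}f/f$. Each $L_{k}(f_{i})$ has poles only in $S\cup\{\mathbf{p}_{\infty}\}\cup\{\mathbf{p}:d_{\mathbf{p}}t=0\}$, so by Proposition \ref{functiont}(c) the pole divisor of $W/(f_{1}\cdots f_{n-1})$ is bounded by $O(\gen+|S|)$, independently of the $f_{i}$.

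On the other hand, differentiating $\sum f_{i}=1$ up to order $n-1$ produces a linear system whose Cramer--rule solution expresses each $f_{j}$ as a ratio of Wronskian-type $(n-1)\times(n-1)$ minors formed from the $f_{i}$ and their successive derivatives. Comparing the two presentations of $W$ and applying the inductive hypothesis to the shorter relations arising in the cofactor expansions produces the desired bound $\binom{n}{2}\max\{0,2\gen-2+|S|\}$, with the combinatorial constant emerging as the telescoping sum $\sum_{k=1}^{n-1}k=\binom{n}{2}$. The main obstacle is the careful bookkeeping in this last step: propagating the minimality hypothesis faithfully through the reduction to $\mathbf{k}$-linear independence, tracking how the poles of the successive logarithmic derivatives $L_{k}(f_{i})$ accumulate in $h(W)$, and verifying that the telescoping yields precisely the constant $\binom{n}{2}$ rather than a weaker $O(n^{2})$ bound.
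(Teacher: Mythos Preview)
The paper does not prove Theorem~\ref{BrMa}; it is quoted as a known result of Brownawell and Masser \cite{BM} and used as a black box throughout. So there is no in-paper argument to compare against---only the original 1986 proof.

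Your sketch does follow the Brownawell--Masser Wronskian method, and the first three paragraphs are accurate: Mason's inequality handles $n=2$; one reduces to $\mathbf{k}$-linear independence of $f_{1},\ldots,f_{n}$ (this is where, and only where, the induction is invoked); and the factorisation $W=(f_{1}\cdots f_{n-1})\det\bigl(L_{j-1}(f_{i})\bigr)$ together with the pole estimate on logarithmic derivatives is exactly the engine of the argument.

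The last paragraph, however, misdescribes the endgame. Once Cramer's rule expresses each $f_{j}$ as a ratio of two determinants built from the $L_{k}(f_{i})$, the height bound follows \emph{directly} from the pole count on those determinants: the entry in row $k$ has a pole of order at most $k$ at each point of $S$ (plus the genus correction from Proposition~\ref{functiont}(c)), so each determinant has pole divisor of degree at most $\bigl(\sum_{k=0}^{n-1}k\bigr)|S|+O(\mathfrak{g})$, and this yields $h(f_{j})\le\binom{n}{2}\max\{0,2\mathfrak{g}-2+|S|\}$. There is no further appeal to the inductive hypothesis, and no ``shorter relations arising in the cofactor expansions'' enter the argument; that step in your outline is spurious. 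The telescoping you mention is simply the row-by-row accumulation $\sum_{k=0}^{n-1}k=\binom{n}{2}$ in the pole estimate for a single determinant, not an inductive cascade.
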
 The following is an analogue of Green's lemma in Nevanlinna's
theory. \begin{corollary}\label{Green} Let the characteristic of
$\K$ be zero and $\ell$ be an integer. Let $a_{1},\hdots,a_{n},f_{1},\hdots,f_{n}\in K^{*}$,
$n\ge2$. If 
\[
a_{1}f_{1}^{\ell}+\cdots+a_{n}f_{n}^{\ell}=0,
\]
and no proper subsum of $a_{1}f_{1}^{\ell}+\cdots+a_{n}f_{n}^{\ell}$
vanishes. Then $\frac{f_{i}}{f_{j}}\in{\bf k}$, for all $1\le i,j\le n$,
if 
\begin{align}\label{lbound}
\ell>(n-1)^{2}(n-2)\max\{1,\gen\}+(n-1)^{4}h(a_{1},\hdots,a_{n}).
\end{align}
\end{corollary}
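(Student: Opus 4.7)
The plan is to reduce Corollary \ref{Green} to Theorem \ref{BrMa} (Brownawell--Masser) by the standard device of dividing through by one term. Setting $b_i := -a_i/a_n \in K^*$ and $g_i := f_i/f_n \in K^*$ for $i = 1, \ldots, n-1$, the hypothesis $\sum_{i=1}^n a_i f_i^\ell = 0$ becomes the unit equation
\[
\sum_{i=1}^{n-1} b_i g_i^\ell = 1.
\]
First I would observe that any proper vanishing subsum of this equation, upon multiplication by $-a_n f_n^\ell$, would lift to a proper vanishing subsum of the original relation; hence the non-degeneracy hypothesis transfers to the rewritten equation, and the full sum equals $1 \neq 0$, so the subsum hypothesis of Theorem \ref{BrMa} is satisfied.

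Letting $H := h(a_1, \ldots, a_n)$, I would take $S$ to be the finite set of places of $C$ at which some $b_i$ or some $g_i$ has a zero or a pole, so that each $b_i g_i^\ell$ lies in $\mathcal{O}_S^*$. Using $h(b_i) = h(a_i/a_n) \leq H$ (immediate from the definition of the tuple height) together with the bound that any $u \in K^*$ has at most $2h(u)$ places of zero-or-pole, one estimates
\[
|S| \leq 2(n-1)H + 2\sum_{i=1}^{n-1} h(g_i).
\]
Theorem \ref{BrMa} then yields $h(b_i g_i^\ell) \leq \tfrac{(n-1)(n-2)}{2}\max\{0,\, 2\mathfrak{g}-2+|S|\}$ for each $i$. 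Combining with the elementary inequality $\ell\, h(g_i) = h(g_i^\ell) \leq h(b_i g_i^\ell) + h(b_i)$, summing over $i$, and substituting the estimate on $|S|$, I would obtain an inequality of the form
\[
\bigl[\ell - (n-1)^2(n-2)\bigr] \sum_{i=1}^{n-1} h(g_i) \leq (n-1)H + (n-1)^2(n-2)(\mathfrak{g}-1) + (n-1)^3(n-2) H.
\]

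To conclude, I would verify that the hypothesis $\ell > (n-1)^2(n-2)\max\{1,\mathfrak{g}\} + (n-1)^4 H$ makes the right-hand side strictly smaller than $\ell - (n-1)^2(n-2)$. Since each $h(g_i)$ is a non-negative integer, this forces $\sum h(g_i) = 0$, whence every $g_i = f_i/f_n \in \mathbf{k}^*$, and hence $f_i/f_j \in \mathbf{k}$ for all $i,j$, as desired. The main point requiring care is this final numerical comparison---in particular, absorbing $(n-1) + (n-1)^3(n-2)$ into $(n-1)^4$ and using the $\max\{1,\mathfrak{g}\}$ factor to cover the $\mathfrak{g}=0$ case---while the rest is essentially automatic once the equation is put in unit form.
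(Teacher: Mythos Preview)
Your proposal is correct and follows essentially the same route as the paper's proof: divide through by $a_nf_n^\ell$, take $S$ to be the support of the $b_i$ and $g_i$, apply Brownawell--Masser, and then carry out exactly the same chain of height inequalities and final numerical comparison (the paper's $((n-1)^2(n-2)+1)(n-1)$ is your $(n-1)+(n-1)^3(n-2)$). The only cosmetic differences are that the paper writes the argument as a contradiction (assuming some $g_i$ is nonconstant, which also guarantees $|S|\ge 2$ so that the $\max\{0,\cdot\}$ can be dropped) rather than directly forcing $\sum h(g_i)<1$, and uses $b_i=a_i/a_n$ with right-hand side $-1$ instead of your sign convention.
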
 \begin{proof} Let $b_{i}=\frac{a_{i}}{a_{n}}$ and
$g_{i}=\frac{f_{i}}{f_{n}}$. Then 
\begin{align}
b_{1}g_{1}^{\ell}+\cdots+b_{n-1}g_{n-1}^{\ell}=-1,\label{reformulate1}
\end{align}
and no proper subsum of $b_{1}g_{1}^{\ell}+\cdots+b_{n-1}g_{n-1}^{\ell}$
vanishes. Suppose that \eqref{lbound} holds and that at least one of the $g_{i}$, $1\le i\le n-1$,
is not constant. Let $S$ be the set consisting of the zeros and poles
of $b_{i}$ and $g_{i}$, $1\le i\le n-1$. Then 
\[
2\le|S|\le2\sum_{i=1}^{n-1}(h(b_{i})+h(g_{i})),
\]
and all the $b_{i}$ and $g_{i}$ are $S$-units. Applying Theorem
\ref{BrMa} to the equation \eqref{reformulate1}, we have 
\begin{align}
h(b_{i}g_{i}^{\ell})\le\frac{(n-1)(n-2)}{2}(2\gen-2+2\sum_{j=1}^{n-1}(h(b_{j})+h(g_{j}))),\label{hestimate1}
\end{align}
for $1\le i\le n-1$. As 
\[
\ell h(g_{i})\le h(b_{i}g_{i}^{\ell})+h(b_{i}),
\]
for $1\le i\le n-1$, together with \eqref{hestimate1} we have 
\[
\ell\sum_{i=1}^{n-1}h(g_{i})\le\sum_{i=1}^{n-1}h(b_{i})+(n-1)^{2}(n-2)(\gen-1+\sum_{i=1}^{n-1}(h(b_{i})+h(g_{i}))).
\]
Hence, 
\[
(\ell-(n-1)^{2}(n-2))\sum_{i=1}^{n-1}h(g_{i})\le(n-1)^{2}(n-2)(\gen-1)+((n-1)^{2}(n-2)+1)\sum_{i=1}^{n-1}h(b_{i}).
\]
Since one of the $g_{i}$ is not constant, $\ell>(n-1)^{2}(n-2)$ by \eqref{lbound}
and $h(b_{i})=h(a_{i},a_{n})\le h(a_{1},\hdots,a_{n})$, it implies
that 
\[
\ell-(n-1)^{2}(n-2)\le(n-1)^{2}(n-2)(\gen-1)+(n-1)^{4}h(a_{1},\hdots,a_{n}),
\]
contradicting to \eqref{lbound}. 
\end{proof}

\section{Main Lemmas}

\label{mainlemmas} From now on, we will fix a $t$ satisfying the
conditions in Proposition \ref{functiont} and use the notation $\eta':=\frac{d\eta}{dt}$
for $\eta\in K$. We will use the follow estimate. \begin{lemma}\label{lem:NSgcd_lb}
Let $S$ be a finite subset of $C$. Then the following statements
hold.

\global\long\def\labelenumi{(\theenumi)}%
\global\long\def\theenumi{\alph{enumi}}%

\begin{enumerate}[label=(\alph*)]
\item[{\rm(a)}]  $N_{S,\gcd}(\eta,\eta')\ge N_{S}(\eta)-\overline{N}_{S}(\eta)-3\mathfrak{g}$
for any $\eta\in K$.
\item[{\rm(b)}]  $h(1,\frac{\eta_{1}'}{\eta_{1}},\hdots,\frac{\eta_{\ell}'}{\eta_{\ell}})\le|S|+3\gen$,
where $\eta_{i}\in\mathcal{O}_{S}^{*}$ for each $1\le i\le\ell$. 
\end{enumerate}
\end{lemma}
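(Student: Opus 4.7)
The plan is to analyze both statements locally at each point $\mathbf{p} \in C$ via the chain rule formula \eqref{chain rule}, which gives $\eta' = d_\mathbf{p}\eta \cdot (d_\mathbf{p} t)^{-1}$, and then sum the local contributions using Proposition \ref{functiont}(c), which bounds $\sum_{\mathbf{p}} v_\mathbf{p}^0(d_\mathbf{p} t) \le 3\mathfrak{g}$. The key characteristic-zero input is that for $\eta \in K^*$ with $v := v_\mathbf{p}(\eta)$, we have $v_\mathbf{p}(d_\mathbf{p}\eta) = v-1$ when $v \neq 0$ and $v_\mathbf{p}(d_\mathbf{p}\eta) \ge 0$ when $v = 0$.

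For part (a), I fix $\mathbf{p} \notin S$ with $v := v_\mathbf{p}(\eta) \ge 1$, so the chain rule gives $v_\mathbf{p}(\eta') = (v-1) - v_\mathbf{p}(d_\mathbf{p} t)$. A short case analysis (depending on the sign and size of $v_\mathbf{p}(d_\mathbf{p} t)$ relative to $v-1$) shows that
\[
\min\{v_\mathbf{p}^0(\eta), v_\mathbf{p}^0(\eta')\} \ge (v-1) - v_\mathbf{p}^0(d_\mathbf{p} t).
\]
Summing over $\mathbf{p} \notin S$ with $v_\mathbf{p}(\eta) \ge 1$ gives $N_{S,\gcd}(\eta,\eta') \ge \bigl(N_S(\eta) - \overline{N}_S(\eta)\bigr) - \sum_\mathbf{p} v_\mathbf{p}^0(d_\mathbf{p} t)$, and Proposition \ref{functiont}(c) finishes the proof.

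For part (b), the strategy is to produce a local upper bound on $-\min\{0, v_\mathbf{p}(\eta_i'/\eta_i) : i\}$ at every $\mathbf{p}$. Using the chain rule one has $v_\mathbf{p}(\eta_i'/\eta_i) = v_\mathbf{p}(d_\mathbf{p} \eta_i) - v_\mathbf{p}(\eta_i) - v_\mathbf{p}(d_\mathbf{p} t)$, which combined with the two cases above gives $v_\mathbf{p}(\eta_i'/\eta_i) \ge -v_\mathbf{p}(d_\mathbf{p} t)$ when $v_\mathbf{p}(\eta_i) = 0$, and $v_\mathbf{p}(\eta_i'/\eta_i) = -1 - v_\mathbf{p}(d_\mathbf{p} t)$ otherwise. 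For $\mathbf{p} \notin S$, every $\eta_i$ is a unit and contributes at most $v_\mathbf{p}^0(d_\mathbf{p} t)$; for $\mathbf{p} \in S$, the contribution is at most $1 + v_\mathbf{p}^0(d_\mathbf{p} t)$. Summing yields
\[
h\!\left(1, \tfrac{\eta_1'}{\eta_1}, \ldots, \tfrac{\eta_\ell'}{\eta_\ell}\right) \le |S| + \sum_{\mathbf{p} \in C} v_\mathbf{p}^0(d_\mathbf{p} t) \le |S| + 3\mathfrak{g}.
\]

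The main technical obstacle is not conceptual but bookkeeping: one must carefully juggle the interplay between $v_\mathbf{p}$, $v_\mathbf{p}^0$, and $v_\mathbf{p}^\infty$ in the inequalities, and be attentive to the special point $\mathbf{p}_\infty$ where $d_\mathbf{p} t$ has a pole (this is exactly where the crude bound $\max\{0, 1 + v_\mathbf{p}(d_\mathbf{p} t)\} \le 1 + v_\mathbf{p}^0(d_\mathbf{p} t)$ costs us nothing because that point either lies in $S$ or is handled by the $v_\mathbf{p}^0$ term). No Brownawell--Masser-type input is needed here; only the Riemann--Roch derived bound of Proposition \ref{functiont}(c) and the characteristic-zero derivation formula.
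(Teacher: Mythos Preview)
Your proposal is correct and follows essentially the same approach as the paper's own proof: both use the chain rule \eqref{chain rule} to reduce to the local formulas $v_\p(\eta')=v_\p(\eta)-1-v_\p(d_\p t)$ (when $v_\p(\eta)\ne 0$) and $v_\p(\eta')\ge -v_\p(d_\p t)$ (when $v_\p(\eta)=0$), then sum over $\p$ and invoke Proposition~\ref{functiont}(c). Your remarks on the point $\p_\infty$ and the crude bound $\max\{0,1+v_\p(d_\p t)\}\le 1+v_\p^0(d_\p t)$ make explicit a step the paper leaves implicit, but otherwise the arguments are identical.
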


\begin{proof} It is clear from \eqref{chain rule} that 
\begin{align}\label{eq: v_of_diff}
 v_{\p}(\eta')&=v_{\p}(\eta)-1-v(d_{\p}t) &\text{ if }v_{\p}(\eta)\ne0;\cr
v_{\p}(\eta')&\ge-v(d_{\p}t)  &\text{ if }v_{\p}(\eta)=0.
\end{align}
Consequently, 
\begin{align*}
N_{S,\gcd}(\eta,\eta') & =\sum_{\p\notin S}\min\{v_{\p}^{0}(\eta),v_{\p}^{0}(\eta')\}\\
 & =\sum_{v_{\p}(\eta)>0,\,\p\notin S}\min\{v_{\p}(\eta),v_{\p}(\eta)-1-v(d_{\p}t)\}\\
 & \ge\sum_{v_{\p}(\eta)>0,\,\p\notin S}(v_{\p}(\eta)-1-v^{0}(d_{\p}t))\\
 & \ge N_{S}(\eta)-\overline{N}_{S}(\eta)-3\mathfrak{g}
\end{align*}
by Proposition \ref{functiont} (c). Again by \eqref{eq: v_of_diff}
and the assumption that $\eta_{i}\in\mathcal{O}_{S}^{*}$ for each
$1\le i\le\ell$, we have

\begin{align*}
h(1,\frac{\eta_{1}'}{\eta_{1}},\hdots,\frac{\eta_{\ell}'}{\eta_{\ell}}) & =\sum_{\p\in C}-\min_{1\le i\le\ell}\{0,v_{\p}(\eta_{i}')-v_{\p}(\eta_{i})\}\\
 & \le\sum_{\p\in S}-\min\{0,-1-v(d_{\p}t)\}+\sum_{\p\in C\setminus S}-\min\{0,-v(d_{\p}t)\}\\
 & \le|S|+\sum_{\p\in C}v^{0}(d_{\p}t)\\
 & \le|S|+3\gen
\end{align*}
by Proposition \ref{functiont} (c). \end{proof}

For convenience of discussion, we will use the following convention.
Let $\mathbf{i}=(i_{1},\ldots,i_{n})\in\mathbb{Z}^{n}$ and $\mathbf{u}=(u_{1},\ldots,u_{n})\in(K^{*})^{n}$.
We denote by $\mathbf{x}:=(x_{1},\ldots,x_{n})$, $\mathbf{x^{i}}:=x_{1}^{i_{1}}\cdots x_{n}^{i_{n}}$,
$\mathbf{u^{i}}:=u_{1}^{i}\cdots u_{n}^{i_{n}}\in K^{*}$ and $|\mathbf{i}|:=\sum_{j=1}^{n}|i_{j}|$.
For a polynomial $F(\mathbf{x})=\sum_{\mathbf{i}}a_{\mathbf{i}}\mathbf{x}^{\mathbf{i}}\in K[x_{1},\dots,x_{n}]$,
we denote by $I_{F}$ the set of exponents ${\bf i}$ such that $a_{\mathbf{i}}\ne0$
in the expression of $F$, and define 
\begin{align}
D_{\mathbf{u}}(F)(\mathbf{x}):=\sum_{\mathbf{i}\in I_{F}}\frac{(a_{\mathbf{i}}\mathbf{u}^{\mathbf{i}})'}{\mathbf{u}^{\mathbf{i}}}\mathbf{x}^{\mathbf{i}}.\label{DuF}
\end{align}
Clearly, we have 
\begin{align}
F(\mathbf{u})'=D_{\mathbf{u}}(F)(\mathbf{u}),\label{value}
\end{align}
and the following product rule: 
\begin{align}
D_{\mathbf{u}}(FG)=D_{\mathbf{u}}(F)G+FD_{\mathbf{u}}(G)\label{product}
\end{align}
for each $F,G\in K[x_{1},\dots,x_{n}]$.

The following proposition gives an upper bound on height of the coefficients
of $D_{\mathbf{u}}(F)$ when the $u_{i}$'s are $S$-units. This is
a crucial step. \begin{proposition}\label{heightDu} Let $F$ be
a nonconstant polynomial in $K[x_{1},\dots,x_{n}]$ and $\mathbf{u}=(u_{1},\ldots,u_{n})\in(O_{S}^{*})^{n}$.
Then there exist $c_{1},c_{2}$ depending only on $\deg F$
such that 
\[
\tilde{h}(D_{\mathbf{u}}(F))\le c_{1}\tilde{h}(F)+c_{2}\max\{1,2\gen-2+|S|\}.
\]
\end{proposition}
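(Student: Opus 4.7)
The plan is to expand each coefficient $c_{\mathbf{i}}:=(a_{\mathbf{i}}\mathbf{u}^{\mathbf{i}})'/\mathbf{u}^{\mathbf{i}}$ of $D_{\mathbf{u}}(F)$ via the product rule and the logarithmic derivative identity $(\mathbf{u}^{\mathbf{i}})'/\mathbf{u}^{\mathbf{i}}=\sum_{j=1}^{n}i_{j}(u_{j}'/u_{j})$, giving $c_{\mathbf{i}}=a_{\mathbf{i}}'+a_{\mathbf{i}}\sum_{j}i_{j}(u_{j}'/u_{j})$, and then bound the pole orders place by place. Using subadditivity of $v_{\p}^{\infty}$ under addition and multiplication, together with the fact that the integers $i_{j}$ contribute no poles (characteristic zero) so that $v_{\p}^{\infty}(\sum_{j}i_{j}u_{j}'/u_{j})\le\max_{j}v_{\p}^{\infty}(u_{j}'/u_{j})$, taking $\max$ over $\mathbf{i}\in I_{F}$ and summing over $\p$ yields
\[
\tilde{h}(D_{\mathbf{u}}(F))\;\le\;T\;+\;\tilde{h}(F)\;+\;h\bigl(1,\,u_{1}'/u_{1},\,\ldots,\,u_{n}'/u_{n}\bigr),
\]
where $T:=\sum_{\p\in C}\max_{\mathbf{i}\in I_{F}}v_{\p}^{\infty}(a_{\mathbf{i}}')$. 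The last summand is $\le|S|+3\gen$ by Lemma~\ref{lem:NSgcd_lb}(b).

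It remains to bound $T$ by $O(\tilde{h}(F)+\gen)$, for which I invoke \eqref{eq: v_of_diff}. For $\p\ne\p_{\infty}$, the function $t$ is regular at $\p$, so $v_{\p}(d_{\p}t)\ge 0$. A three-way case split on $v_{\p}(a_{\mathbf{i}})<0$, $=0$, $>0$ then shows that $v_{\p}^{\infty}(a_{\mathbf{i}}')>0$ forces either (i)~$\p$ to be a pole of $a_{\mathbf{i}}$, in which case $v_{\p}^{\infty}(a_{\mathbf{i}}')=v_{\p}^{\infty}(a_{\mathbf{i}})+1+v_{\p}^{0}(d_{\p}t)$, or (ii)~$\p$ to be a ramification point of $t$ with $v_{\p}(a_{\mathbf{i}})\ge 0$, in which case $v_{\p}^{\infty}(a_{\mathbf{i}}')\le v_{\p}^{0}(d_{\p}t)$ without any extra $+1$. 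At $\p_{\infty}$, the explicit value $v_{\p_{\infty}}(d_{\p_{\infty}}t)=-h(t)-1$ makes differentiation by $t$ tame, giving $v_{\p_{\infty}}^{\infty}(a_{\mathbf{i}}')\le v_{\p_{\infty}}^{\infty}(a_{\mathbf{i}})$ by direct computation in all three cases. Summing the bounds over $\p$, applying Proposition~\ref{functiont}(c) to control $\sum_{\p}v_{\p}^{0}(d_{\p}t)\le 3\gen$, and using the elementary observation $\#\{\text{distinct poles of }F\}\le\tilde{h}(F)$ to absorb the stray $+1$ contributions from case~(i), yields $T\le 2\tilde{h}(F)+3\gen$.

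Assembling everything gives $\tilde{h}(D_{\mathbf{u}}(F))\le 3\tilde{h}(F)+6\gen+|S|$, which fits the asserted form $c_{1}\tilde{h}(F)+c_{2}\max\{1,2\gen-2+|S|\}$ with absolute constants $c_{1},c_{2}$. The main obstacle lies in the subtle case split in \eqref{eq: v_of_diff} when $v_{\p}(a_{\mathbf{i}})=0$ (where only an inequality, not an equality, is available) and in confining the stray $+1$ contributions to the finite set of poles of $F$ (producing a clean $\tilde{h}(F)$ bound) rather than summing them across all of $C$, which would diverge.
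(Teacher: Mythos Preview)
Your argument is correct and in fact yields a sharper conclusion than the paper's: you obtain absolute constants $c_{1}=3$ and (up to a harmless readjustment) $c_{2}$ independent of $\deg F$, whereas the proposition only asserts dependence on $\deg F$.

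The paper takes a different, slightly less precise route. It rewrites the coefficient as $a_{\mathbf{i}}\cdot\frac{(a_{\mathbf{i}}\mathbf{u}^{\mathbf{i}})'}{a_{\mathbf{i}}\mathbf{u}^{\mathbf{i}}}$, then \emph{enlarges} $S$ to a set $S'$ containing all zeros and poles of the $a_{\mathbf{i}}$, so that every $a_{\mathbf{i}}\mathbf{u}^{\mathbf{i}}$ becomes an $S'$-unit; a single application of Lemma~\ref{lem:NSgcd_lb}(b) over $S'$ then bounds the full logarithmic-derivative factor, and the enlargement costs $|S'|-|S|\le 2|I_{F}|\,\tilde{h}(F)$, which is where the $\deg F$-dependence enters. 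You instead separate $c_{\mathbf{i}}=a_{\mathbf{i}}'+a_{\mathbf{i}}\sum_{j}i_{j}u_{j}'/u_{j}$, apply Lemma~\ref{lem:NSgcd_lb}(b) only to the genuinely $S$-unit piece $u_{j}'/u_{j}$, and handle the non-unit piece $a_{\mathbf{i}}'$ by a direct placewise analysis via \eqref{eq: v_of_diff}. The payoff is that the stray $+1$ contributions are confined to the poles of $F$ (at most $\tilde{h}(F)$ many), so no factor of $|I_{F}|$ appears. The paper's argument is shorter and avoids the case split at $\p_{\infty}$, but yours gives the better constants.
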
 \begin{proof} Let $F(x_{1},\ldots,x_{n})=\sum_{\mathbf{i}\in I_{F}}a_{\mathbf{i}}\mathbf{x}^{\mathbf{i}}$.
We then choose $S'$ containing $S$ and all the zeros and poles of
all $a_{\mathbf{i}}$ for $\mathbf{i}\in I_{F}$. Then 
\begin{align}
|S'|\leq|S|+2\sum_{\mathbf{i}\in I_{F}}h(a_{\mathbf{i}})\le|S|+2|I_{F}|\tilde{h}(F)\label{Sestimate}
\end{align}
and $a_{\mathbf{i}}\in O_{S'}^{*}$ for each $\mathbf{i}\in I_{F}$.
As 
\[
D_{\mathbf{u}}(F)(\mathbf{x})=\sum_{\mathbf{i}\in I_{F}}a_{\mathbf{i}}\cdot\frac{(a_{\mathbf{i}}\mathbf{u^{i}})'}{a_{\mathbf{i}}\mathbf{u^{i}}}\mathbf{x}^{\mathbf{i}},
\]
we have that 
\begin{align}\label{hFduF}
\tilde{h}(D_{\mathbf{u}}(F)) & \le h(1,(a_{\mathbf{i}})_{{\mathbf{i}}\in I_F})+ h(1,(\frac{(a_{\mathbf{i}}\mathbf{u^{i}})'}{a_{\mathbf{i}}\mathbf{u^{i}}})_{{\mathbf{i}}\in I_F})\cr
 & \le|S|+(2|I_{F}|+1)\tilde{h}(F)+3\gen. 
\end{align}
by Lemma \ref{lem:NSgcd_lb} and \eqref{Sestimate}. The assertion
is now clear since $|I_{F}|\le\binom{n+\deg F}{n}$ and $|S|+3\gen\le 3\max\{1,2\gen-2+|S|\}$. 
\end{proof}

\begin{lemma} \label{lem:coprime-irr}For any irreducible $F(\mathbf{x})=\sum_{\mathbf{i}\in I_{F}}a_{\mathbf{i}}\mathbf{x}^{\mathbf{i}}\in K[x_{1},\dots,x_{n}]$
and $\mathbf{u}\in(K^{*})^{n}$, the two polynomials $F$ and $D_{\mathbf{u}}(F)$
are not coprime if and only if $\frac{a_{\mathbf{i}}\mathbf{u}^{\mathbf{i}}}{a_{\mathbf{j}}\mathbf{u}^{\mathbf{j}}}\in{\bf k}^{*}$
whenever $\mathbf{i},\mathbf{j}\in I_{F}$. \end{lemma}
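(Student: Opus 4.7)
The plan is to rewrite everything in terms of logarithmic derivatives and then use the fact that in characteristic zero, the kernel of our global derivation $'$ on $K$ is exactly $\mathbf{k}$ (since $\mathbf{k}$ is algebraically closed and $K/\mathbf{k}(t)$ is finite separable). To set things up, put $c_{\mathbf{i}} := a_{\mathbf{i}}\mathbf{u}^{\mathbf{i}}$ for each $\mathbf{i} \in I_F$. Then the definition \eqref{DuF} of $D_{\mathbf{u}}(F)$ can be rewritten as
\[
D_{\mathbf{u}}(F)(\mathbf{x}) \;=\; \sum_{\mathbf{i} \in I_F} \frac{c_{\mathbf{i}}'}{c_{\mathbf{i}}}\, a_{\mathbf{i}}\, \mathbf{x}^{\mathbf{i}}.
\]

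For the easier direction ($\Leftarrow$), suppose all ratios $c_{\mathbf{i}}/c_{\mathbf{j}}$ lie in $\mathbf{k}^{*}$. Fix $\mathbf{i}_{0} \in I_F$; then each $c_{\mathbf{i}}$ is a $\mathbf{k}^*$-scalar multiple of $c_{\mathbf{i}_0}$, so $c_{\mathbf{i}}'/c_{\mathbf{i}} = c_{\mathbf{i}_{0}}'/c_{\mathbf{i}_{0}}$ for every $\mathbf{i} \in I_F$. Factoring this common logarithmic derivative out of the sum gives $D_{\mathbf{u}}(F) = (c_{\mathbf{i}_{0}}'/c_{\mathbf{i}_{0}})\, F$, so $F \mid D_{\mathbf{u}}(F)$ and they fail to be coprime.

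For the harder direction ($\Rightarrow$), suppose $F$ and $D_{\mathbf{u}}(F)$ have a nontrivial common factor. Since $F$ is irreducible, this common factor must (up to a unit) equal $F$, so $F \mid D_{\mathbf{u}}(F)$. The key observation is that $D_{\mathbf{u}}(F)$ is supported on exactly the same monomials as $F$ (at worst some coefficients vanish), so $\deg D_{\mathbf{u}}(F) \le \deg F$. Thus either $D_{\mathbf{u}}(F) = 0$ or $D_{\mathbf{u}}(F) = g\,F$ for some $g \in K^{*}$. In the first case every $c_{\mathbf{i}}' = 0$, so each $c_{\mathbf{i}} \in \mathbf{k}^{*}$ and the ratios are trivially in $\mathbf{k}^{*}$. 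In the second case, comparing the coefficient of $\mathbf{x}^{\mathbf{i}}$ on both sides yields $c_{\mathbf{i}}'/c_{\mathbf{i}} = g$ for every $\mathbf{i} \in I_F$, so all logarithmic derivatives coincide.

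The final step, which is the only place something could go wrong, is to upgrade equality of logarithmic derivatives to the ratios lying in $\mathbf{k}^{*}$. For any $\mathbf{i}, \mathbf{j} \in I_F$, using the product/quotient rule for $'$,
\[
\frac{(c_{\mathbf{i}}/c_{\mathbf{j}})'}{c_{\mathbf{i}}/c_{\mathbf{j}}} \;=\; \frac{c_{\mathbf{i}}'}{c_{\mathbf{i}}} - \frac{c_{\mathbf{j}}'}{c_{\mathbf{j}}} \;=\; 0,
\]
so $(c_{\mathbf{i}}/c_{\mathbf{j}})' = 0$. Because the characteristic is zero and $\mathbf{k}$ is algebraically closed in $K$, the constants of the derivation on $K$ are exactly $\mathbf{k}$; hence $c_{\mathbf{i}}/c_{\mathbf{j}} \in \mathbf{k}^{*}$, as desired. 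The only subtle point is identifying $\ker(') = \mathbf{k}$, which follows from the construction of $'$ in Section \ref{sec:Preliminaries}: it extends formal $d/dt$ from $\mathbf{k}(t)$ to the finite separable extension $K$, whose field of constants is $\mathbf{k}$.
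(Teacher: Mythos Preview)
Your proof is correct and follows essentially the same route as the paper's: both use that $\deg D_{\mathbf{u}}(F)\le\deg F$ and irreducibility of $F$ to force $D_{\mathbf{u}}(F)=\lambda F$ for some $\lambda\in K$, then compare coefficients to equate all logarithmic derivatives $\frac{(a_{\mathbf{i}}\mathbf{u}^{\mathbf{i}})'}{a_{\mathbf{i}}\mathbf{u}^{\mathbf{i}}}$, and finally pass to $(\frac{a_{\mathbf{i}}\mathbf{u}^{\mathbf{i}}}{a_{\mathbf{j}}\mathbf{u}^{\mathbf{j}}})'=0$. Your write-up is a bit more explicit (separating the $D_{\mathbf{u}}(F)=0$ case and spelling out why $\ker(')=\mathbf{k}$), but the argument is the same.
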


\begin{proof} It is clear from \eqref{DuF} that $\deg D_{\mathbf{u}}(F)\le\deg F$
for each $j$. Since $F$ is irreducible, it follows that $F$ and
$D_{\mathbf{u}}(F)$ are not coprime if and only if $D_{\mathbf{u}}(F)=\lambda F$
for some $\lambda\in K$, i.e., $\frac{(a_{\mathbf{i}}\mathbf{u}^{\mathbf{i}})'}{\mathbf{u}^{\mathbf{i}}}=\lambda a_{\mathbf{i}}$
for each $\mathbf{i}$. The latter condition is equivalent to that
for those $\mathbf{i},\mathbf{j}\in I_{F}$ we must have $\frac{(a_{\mathbf{i}}\mathbf{u}^{\mathbf{i}})'}{a_{\mathbf{i}}\mathbf{u}^{\mathbf{i}}}=\frac{(a_{\mathbf{j}}\mathbf{u}^{\mathbf{j}})'}{a_{\mathbf{j}}\mathbf{u}^{\mathbf{j}}}$,
which is equivalent to that $\left(\frac{a_{\mathbf{i}}\mathbf{u}^{\mathbf{i}}}{a_{\mathbf{j}}\mathbf{u}^{\mathbf{j}}}\right)'=0$.
\end{proof} \begin{lemma} \label{lem:coprime-gen} Let $F=\prod_{i=1}^{r}P_{i}\in K[x_{1},\hdots,x_{n}]$,
where $P_{i}$, $1\le i\le r$, is irreducible and not monomial in
$K[x_{1},\hdots,x_{n}]$. Let $\mathbf{u}\in(K^{*})^{n}$, $\mathbf{e}=(e_{1},\hdots,e_{r})$
be an $r$-tuple of positive integers. Then either the two polynomials
$F$ and 
\[
F_{\mathbf{e},\mathbf{u}}:=\sum_{i=1}^{r}e_{i}D_{\mathbf{u}}(P_{i})\prod_{j\ne i}P_{j}
\]
are coprime in $K[x_{1},\hdots,x_{n}]$ or 
\[
h(u_{1}^{m_{1}}\cdots u_{n}^{m_{n}})\le h(F)
\]
for some $(m_{1},\hdots m_{n})\in\mathbb{Z}^{n}\setminus\{(0,\ldots,0)\}$
with $\sum|m_{i}|\le2\deg F$.

\end{lemma}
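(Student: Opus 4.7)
The plan is to suppose that $F$ and $F_{\mathbf{e},\mathbf{u}}$ are \emph{not} coprime in $K[\mathbf{x}]$, and extract from this a multiplicative relation on the tuple $\mathbf{u}$ that yields the claimed height bound. Since $F=\prod_{i=1}^{r}P_{i}$ is a product of irreducibles, which I take to be pairwise nonassociate (otherwise any two associate $P_{i}$'s force $F$ and $F_{\mathbf{e},\mathbf{u}}$ to share a factor for free, and the bound on $\mathbf{u}$ cannot be recovered from the combinatorics alone), any irreducible common factor of $F$ and $F_{\mathbf{e},\mathbf{u}}$ must be some $P_{k}$. The first step is to notice that for each $i\ne k$ the summand $e_{i}D_{\mathbf{u}}(P_{i})\prod_{j\ne i}P_{j}$ of $F_{\mathbf{e},\mathbf{u}}$ is already divisible by $P_{k}$ (via $\prod_{j\ne i}P_{j}$), so the condition $P_{k}\mid F_{\mathbf{e},\mathbf{u}}$ collapses to $P_{k}\mid e_{k}D_{\mathbf{u}}(P_{k})\prod_{j\ne k}P_{j}$. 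As $e_{k}$ is a positive integer and $P_{k}$ is coprime to each $P_{j}$ with $j\ne k$, this forces $P_{k}\mid D_{\mathbf{u}}(P_{k})$. Because $\deg D_{\mathbf{u}}(P_{k})\le\deg P_{k}$ from the definition \eqref{DuF} and $P_{k}$ is irreducible, I conclude $D_{\mathbf{u}}(P_{k})=\lambda P_{k}$ for some $\lambda\in K$.

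Next I would invoke Lemma \ref{lem:coprime-irr}: the identity $D_{\mathbf{u}}(P_{k})=\lambda P_{k}$ means that $P_{k}$ and $D_{\mathbf{u}}(P_{k})$ are not coprime, hence for every $\mathbf{i},\mathbf{j}\in I_{P_{k}}$ one has $a_{\mathbf{i}}\mathbf{u}^{\mathbf{i}}/a_{\mathbf{j}}\mathbf{u}^{\mathbf{j}}\in\mathbf{k}^{*}$. Since $P_{k}$ is not a monomial, $|I_{P_{k}}|\ge 2$; picking distinct $\mathbf{i},\mathbf{j}\in I_{P_{k}}$ and setting $\mathbf{m}:=\mathbf{i}-\mathbf{j}\in\mathbb{Z}^{n}\setminus\{\mathbf{0}\}$, one has $|\mathbf{m}|\le|\mathbf{i}|+|\mathbf{j}|\le 2\deg P_{k}\le 2\deg F$ and $\mathbf{u}^{\mathbf{m}}=c\cdot(a_{\mathbf{j}}/a_{\mathbf{i}})$ for some $c\in\mathbf{k}^{*}$. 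Using $h(c\,g)=h(g)$ for $c\in\mathbf{k}^{*}$, the standard identity $h(a/b)=h(a,b)$, the pointwise inequality $v_{\mathbf{p}}(P_{k})\le\min\{v_{\mathbf{p}}(a_{\mathbf{i}}),v_{\mathbf{p}}(a_{\mathbf{j}})\}$ which on summing yields $h(a_{\mathbf{i}},a_{\mathbf{j}})\le h(P_{k})$, and finally Gauss's lemma \eqref{Gaussht} giving $h(F)=\sum_{i}h(P_{i})\ge h(P_{k})$, I obtain $h(\mathbf{u}^{\mathbf{m}})\le h(F)$, as required.

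There is no serious obstacle: once the factor-chase $P_{k}\mid F_{\mathbf{e},\mathbf{u}}\Rightarrow P_{k}\mid D_{\mathbf{u}}(P_{k})$ is carried out, the remainder is just height bookkeeping combined with Lemma \ref{lem:coprime-irr}. The one delicate point worth flagging is the pairwise-nonassociate assumption on the $P_{i}$, without which the reduction to $P_{k}\mid D_{\mathbf{u}}(P_{k})$ collapses; in the applications envisaged this assumption holds since $F$ will be presented with its distinct irreducible factors.
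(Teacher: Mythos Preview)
Your argument is correct and follows essentially the same route as the paper's proof: assume non-coprimality, observe that some irreducible factor $P_{k}$ of $F$ must divide $F_{\mathbf{e},\mathbf{u}}$ and hence $D_{\mathbf{u}}(P_{k})$, then apply Lemma~\ref{lem:coprime-irr} and bound the height via $h(P_{k})\le h(F)$ using Gauss's lemma. Your flag about the pairwise-nonassociate hypothesis on the $P_{i}$ is well taken; the paper's proof tacitly uses it in the step ``$P_{i}\mid F_{\mathbf{e},\mathbf{u}}$ and thus $P_{i}\mid D_{\mathbf{u}}(P_{i})$'', and in the application (Lemma~\ref{dth_power_count}) the $P_{i}$ are indeed taken distinct.
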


\begin{proof} 
If $F$ and $F_{\mathbf{e},{\mathbf{u}}}$ are not coprime in $K[x_{1},\dots,x_{n}]$,
some $P_{i}$ must divide $F_{\mathbf{e},\mathbf{u}}$ and thus divide
$D_{\mathbf{u}}(P_{i})$. Since $P_{i}$ is not a monomial, we have
$P_{i}=\sum_{\mathbf{i}\in I_{P_{i}}}a_{\mathbf{i}}\mathbf{x}^{\mathbf{i}}$
with $|I_{P_{i}}|\ge2$. Then Lemma \ref{lem:coprime-irr} implies
that $\frac{a_{\mathbf{i}}\mathbf{u}^{\mathbf{i}}}{a_{\mathbf{j}}\mathbf{u}^{\mathbf{j}}}\in{\bf k}^{*}$
whenever $\mathbf{i},\mathbf{j}\in I_{P_{i}}$. Since $|I_{P_{i}}|\ge2$,
we can choose distinct $\mathbf{i},\mathbf{j}\in I_{P_{i}}$. Thus
\[
h(\mathbf{u}^{\mathbf{i}-\mathbf{j}})=h(a_{\mathbf{i}}^{-1}a_{\mathbf{j}})\le h(P_{i})\le h(F),
\]
by \eqref{Gaussht} with $0\ne|\mathbf{i}-\mathbf{j}|\le2\deg P_{i}\le2\deg F.$
\end{proof} 
\begin{lemma}\label{dth_power_count} Let $d\ge2$
be an integer, $F_{1},\hdots,F_{r}\in O_{S}[x_{1},\dots,x_{n}]$ be
distinct non-monomial polynomials which are irreducible in $K[x_{1},\dots,x_{n}]$,
and put $F:=F_{1}^{e_{1}}\cdots F_{r}^{e_{r}}$ with $1\le e_{i}<d$
for each $i$. Let $\mathbf{u}=(u_{1},\ldots,u_{n})\in(\mathcal{O}_{S}^{*})^{n}$.  If $F(\mathbf{u})=g^{d}$ for some $g\in K^{*}$,
then for every $\varepsilon>0$ there exist an integer $m$ and reals
$c_{1}$, $c_{2}$, all depending only on {$(\varepsilon,\delta,d)$,
where $\deg F\le\delta$,} such that either 
\begin{equation}
N_{S}(F(\mathbf{u}))\le\varepsilon\max_{1\le j\le n}\{h(u_{j})\}.\label{1}
\end{equation}
or 
\begin{equation}
h(u_{1}^{m_{1}}\cdots u_{n}^{m_{n}})\le c_{1}\tilde{h}(F)+c_{2}\max\{1,2\gen-2+|S|\}\label{2}
\end{equation}
for some integers $m_{1},\hdots,m_{n}$ not all zeros with $\sum|m_{i}|\le2m$.
\end{lemma}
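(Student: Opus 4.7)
The plan is to carry out the Corvaja--Zannier strategy sketched in the introduction: differentiate $F(\mathbf{u})=g^{d}$ and convert the resulting high-order information into a GCD estimate via Theorem~\ref{movinggcdunit}. Set $F_{\mathrm{red}} := \prod_{i=1}^{r} F_{i}$ and
\[
H_{\mathbf{u}} := \sum_{i=1}^{r} e_{i}\, D_{\mathbf{u}}(F_{i})\prod_{j\ne i} F_{j},
\]
which is precisely the polynomial $F_{\mathbf{e},\mathbf{u}}$ of Lemma~\ref{lem:coprime-gen} applied to $F_{\mathrm{red}}$ with exponent tuple $\mathbf{e}=(e_{1},\ldots,e_{r})$. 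If the second alternative of Lemma~\ref{lem:coprime-gen} holds, then \eqref{2} follows at once from $h(u_{1}^{m_{1}}\cdots u_{n}^{m_{n}})\le h(F_{\mathrm{red}})\le\tilde h(F)$ for some nonzero $\mathbf{m}$ with $\sum|m_{i}|\le 2\deg F\le 2\delta$. So I may assume $F_{\mathrm{red}}$ and $H_{\mathbf{u}}$ are coprime in $K[\mathbf{x}]$.

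By iterating the product rule \eqref{product}, $D_{\mathbf{u}}(F)=\bigl(\prod_{i}F_{i}^{e_{i}-1}\bigr)H_{\mathbf{u}}$, so evaluating at $\mathbf{u}$ gives
\[
\prod_{i}F_{i}(\mathbf{u})^{e_{i}-1}\cdot H_{\mathbf{u}}(\mathbf{u}) = (F(\mathbf{u}))' = d\,g^{d-1}g'.
\]
For every $\mathfrak{p}\notin S$ with $v_{\mathfrak{p}}(g)\ge 1$, the formula \eqref{eq: v_of_diff} forces $v_{\mathfrak{p}}((F(\mathbf{u}))')=v_{\mathfrak{p}}(F(\mathbf{u}))-1-v_{\mathfrak{p}}(d_{\mathfrak{p}}t)$; subtracting $v_{\mathfrak{p}}\bigl(\prod_{i}F_{i}(\mathbf{u})^{e_{i}-1}\bigr)=\sum_{i}(e_{i}-1)v_{\mathfrak{p}}(F_{i}(\mathbf{u}))$ from $v_{\mathfrak{p}}(F(\mathbf{u}))=\sum_{i}e_{i}v_{\mathfrak{p}}(F_{i}(\mathbf{u}))$ isolates the key local estimate $v_{\mathfrak{p}}(H_{\mathbf{u}}(\mathbf{u}))=v_{\mathfrak{p}}(F_{\mathrm{red}}(\mathbf{u}))-1-v_{\mathfrak{p}}(d_{\mathfrak{p}}t)$. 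Summing over such $\mathfrak{p}$ and applying Proposition~\ref{functiont}(c) yields $N_{S,\gcd}(F_{\mathrm{red}}(\mathbf{u}),H_{\mathbf{u}}(\mathbf{u}))\ge N_{S}(F_{\mathrm{red}}(\mathbf{u}))-\bar N_{S}(F_{\mathrm{red}}(\mathbf{u}))-3\gen$. Combining this with $N_{S}(F(\mathbf{u}))\le(d-1)N_{S}(F_{\mathrm{red}}(\mathbf{u}))$ (since each $e_{i}<d$) and $\bar N_{S}(F_{\mathrm{red}}(\mathbf{u}))=\bar N_{S}(g)\le N_{S}(g)=N_{S}(F(\mathbf{u}))/d$ produces the key inequality
\[
N_{S}(F(\mathbf{u}))\ \le\ d(d-1)\,N_{S,\gcd}(F_{\mathrm{red}}(\mathbf{u}),H_{\mathbf{u}}(\mathbf{u}))+3d(d-1)\gen.
\]

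Finally I invoke Theorem~\ref{movinggcdunit} on the coprime pair $(F_{\mathrm{red}},H_{\mathbf{u}})$ with $\epsilon':=\varepsilon/(2d(d-1))$. Proposition~\ref{heightDu} bounds $\tilde h(H_{\mathbf{u}})$ in terms of the $\tilde h(F_{i})$'s and the quantity $\max\{1,2\gen-2+|S|\}$; together with a function-field Gelfond-type estimate $\tilde h(F_{i})\le c(d,\delta)\cdot\tilde h(F)+c'(d,\delta)\max\{1,2\gen-2+|S|\}$ and the corresponding bound for $\tilde h(F_{\mathrm{red}})$, the threshold and monomial-height constants from Theorem~\ref{movinggcdunit} take the form required by \eqref{2}. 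When $\max_{j}h(u_{j})$ exceeds this threshold, enlarged so as to absorb the additive $3d(d-1)\gen$ error, Theorem~\ref{movinggcdunit} delivers either \eqref{2} or $N_{S,\gcd}\le\epsilon'\max_{j}h(u_{j})$, the latter yielding \eqref{1} through the key inequality. Below the threshold, taking $(m_{1},\ldots,m_{n})$ to be any standard basis vector makes \eqref{2} trivially true. The hard part will be establishing the clean factor-height bound for the $\tilde h(F_{i})$ in terms of $\tilde h(F)$ and carefully verifying that all constants indeed depend only on $(\varepsilon,\delta,d)$.
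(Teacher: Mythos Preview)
Your proposal follows essentially the same strategy as the paper: use Lemma~\ref{lem:coprime-gen} to reduce to the coprime case, differentiate $g^{d}=F(\mathbf u)$ and factor $D_{\mathbf u}(F)=\bigl(\prod_i F_i^{e_i-1}\bigr)F_{\mathbf e,\mathbf u}$, bound $N_{S,\gcd}(\bar F(\mathbf u),F_{\mathbf e,\mathbf u}(\mathbf u))$ from above via Theorem~\ref{movinggcdunit} and from below via the zeros of $g$, and combine. The only organizational difference is that the paper applies Lemma~\ref{lem:NSgcd_lb}(a) directly to $\eta=g^{d}$ to obtain $N_{S,\gcd}(F(\mathbf u),D_{\mathbf u}(F)(\mathbf u))\ge(d-1)N_S(g)-3\gen$ and then passes to $N_{S,\gcd}(\bar F(\mathbf u),F_{\mathbf e,\mathbf u}(\mathbf u))$ by subtracting $\sum_i(e_i-1)N_S(F_i(\mathbf u))$, whereas you redo that local computation for $(F_{\mathrm{red}},H_{\mathbf u})$ directly; the two routes are equivalent and your inequality $N_S(F(\mathbf u))\le d(d-1)N_{S,\gcd}+3d(d-1)\gen$ is correct.

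One caution about the step you flag as ``the hard part'': the proposed Gelfond-type bound $\tilde h(F_i)\le c\,\tilde h(F)+c'\max\{1,2\gen-2+|S|\}$ is \emph{false} in the generality of the lemma. For instance, on $\mathbb P^1$ with $S=\{0,\infty\}$ take $F_1=t^{-N}(x+1)$, $F_2=t^{a}(x+2)$ with $ea=N$ and $F=F_1F_2^{\,e}$: then $\tilde h(F)=0$ while $\tilde h(F_1)=N$ and $\tilde h(\bar F)=N-a$. The paper glosses over exactly this point, simply citing Theorem~\ref{movinggcdunit} and Proposition~\ref{heightDu}. What actually saves both arguments in the intended application (the proof of Theorem~\ref{thm:y^e=00003D00003D00003D00003D00003DFg^l}) is that there the $F_i$ are chosen \emph{monic}, so $v_{\mathfrak p}(F_i)\le 0$ everywhere, whence $\tilde h(F_i)=h(F_i)\le h(F)\le\tilde h(F)$ by Gauss's lemma~\eqref{Gaussht}, and likewise $\tilde h(\bar F)=h(\bar F)\le h(F)$; together with Proposition~\ref{heightDu} applied to each $F_i$ this gives the needed control on $\tilde h(H_{\mathbf u})$. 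So your plan is sound once you assume (as the paper tacitly does in use) that each $F_i$ has a coefficient in $\mathbf k^*$, rather than appealing to a general $\tilde h$-factor bound that does not exist.
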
 
\begin{proof}By \eqref{value} and the product rule \eqref{product} of $D_{\mathbf{u}}$,
we have 
\begin{align}
dg^{d-1}g'=D_{\mathbf{u}}(F)(\mathbf{u})=(F_{1}^{e_{1}-1}(\mathbf{u})\cdots F_{r}^{e_{r}-1}(\mathbf{u}))F_{\mathbf{e},\mathbf{u}}(\mathbf{u}),\label{expression1}
\end{align}
where  $F_{\mathbf{e},\mathbf{u}}:=\sum_{i=1}^{r}e_{i}D_{\mathbf{u}}(P_{i})\prod_{j\ne i}P_{j}$
as defined in Lemma \ref{lem:coprime-gen}, from which it follows
that either   $\bar{F}:=F_{1}\cdots F_{r}$ and $F_{\mathbf{e},\mathbf{u}}$
are coprime in $K[x_{1},\dots,x_{n}]$ or the second assertion \eqref{2}
holds  with any $(c_{1},c_{2},m)$ with $c_{1}\ge1$,
$c_{2}\ge0$ and $m\ge\d$. It remains to consider the case where
the former condition holds. By Theorem \ref{movinggcdunit} and Proposition \ref{heightDu},
for any $\epsilon'>0$ there exist an
integer $m\ge\d$,  positive reals
$c_{i}$, $1\le i\le4$, with $c_{1}\ge1$ and $c_{2}\ge0$,
depending only on $\epsilon'$, such that whenever 
\begin{align}
\max_{1\le i\le n}h(u_{i})\ge c_{3}\tilde{h}(F)+c_{4}\max\{1,2\gen-2+|S|\},\label{heightpart}
\end{align}
we have either 
\[
h(u_{1}^{m_{1}}\cdots u_{n}^{m_{n}})\le c_{1}\tilde{h}(F)+c_{2}\max\{1,2\gen-2+|S|\}
\]
for some integers $m_{1},\hdots,m_{n}$, not all zeros with $\sum|m_{i}|\le2m$,
or 
\begin{align}
N_{S,{\rm gcd}}(\bar{F}(\mathbf{u}),F_{\mathbf{e},\mathbf{u}}(\mathbf{u}))\le\epsilon'\max_{1\le i\le n}h(u_{i}).\label{gcdpart}
\end{align}
We note that 
$h(u_{1}^{m_{1}}\cdots u_{n}^{m_{n}})\le\left(\sum_{1\le i\le n}|m_{i}|\right)\max_{1\le i\le n}h(u_{i})$,
which shows that the case where \eqref{heightpart} does not hold
 leads to \eqref{2}  once we enlarge 
$c_{1}$ and $c_{2}$.

If \eqref{gcdpart} holds, then together with \eqref{expression1},
we have 
\begin{equation}
 N_{S,\gcd}(F(\mathbf{u}),D_{\mathbf{u}}(F)(\mathbf{u}))\le\sum_{i=1}^{r}(e_{i}-1)N_{S}(F_{i}(\mathbf{u}))+\varepsilon'\max_{1\le j\le n}\{h(u_{j})\}.\label{upbound}
\end{equation}
On the other hand,  since $g^{d}=F(\mathbf{u})$, the
key equality \eqref{value}  and Lemma \ref{lem:NSgcd_lb} imply
that 
\[
 N_{S,\gcd}(F(\mathbf{u}),D_{\mathbf{u}}(F)(\mathbf{u}))=N_{S,\gcd}(g^{d},(g^{d})')\ge(d-1)N_{S}(g)-3\gen;
\]
together
with the fact $F(\mathbf{u}),F_{1}(\mathbf{u}),\ldots,F_{r}(\mathbf{u})\in O_{S}$,
this  gives 
\[
 N_{S,\gcd}(F(\mathbf{u}),D_{\mathbf{u}}(F)(\mathbf{u}))+3\gen\ge\frac{d-1}{d}N_{S}(F(\mathbf{u}))=\frac{d-1}{d}\sum_{i=1}^{r}e_{i}N_{S}(F(\mathbf{u})). 
\]
Together with \eqref{upbound}, we have 
\[
\sum_{i=1}^{k}(1-\frac{e_{i}}{d})N_{S}(F_{i}(\mathbf{u}))\le2\varepsilon'\max_{1\le j\le n}\{h(u_{j})\},
\]
by further requiring that $3\gen\le\varepsilon'\max_{1\le j\le n}\{h(u_{j})\}$;
this is possible since we may assume that $c_{4}\ge3\gen/\epsilon'$
in \eqref{heightpart}. Since $e_{i}<d$ and $N_{S}(F_{i}(\mathbf{u}))\ge0$
for each $i$, it implies that 
 
\[
\frac{1}{d}N_{S}(F_{i}(\mathbf{u}))\le2\varepsilon'\max_{1\le j\le n}\{h(u_{j})\}
\]
for each $i$. By taking $\varepsilon'= \frac{\varepsilon}{2d\d}\le \frac{\varepsilon}{2d\deg F}\le\frac{\varepsilon}{2d(e_{1}+\cdots+e_{r})}$,
we have 
\[
N_{S}(F(\mathbf{u}))=\sum_{i=1}^{r}e_{i}N_{S}(F_{i}(\mathbf{u}))\le\varepsilon\max_{1\le j\le n}\{h(u_{j})\}.
\]
\end{proof}

\section{\label{section4}Proof of Theorem \ref{thm:y^e=00003D00003D00003D00003D00003DFg^l} }

For each finite extension $L$ over $K$, denote by $h_{L}$ the height
function (both on $L$ and on $L[x_{1},\dots,x_{n}]$) obtained from
the same construction of $h$ with $K$ replaced by $L$;  similar
for the notation $\tilde{h}_{L}$, and $O_{L,\widetilde{S}}$, $N_{L,\widetilde{S}}$,
$\overline{N}_{L,\widetilde{S}}$, where $\widetilde{S}\subset C_{L}(\mathbf{k})$
is a finite subset, and $C_{L}$ be a smooth projective
curve over $\mathbf{k}$ such that $L=\mathbf{k}(C_{L})$. We need the following result from \cite[Proposition 2.4]{PW2015}.

\begin{proposition}\label{Prop: genus}

Let $\alpha$ be a nonconstant algebraic element over $K$ with $[K(\alpha):K]=m$.
Denote by $L=K(\alpha)$ and let $C_{L}$ be a smooth projective curve
over $\mathbf{k}$ of genus $\gen_{L}$ such that $L=\mathbf{k}(C_{L})$.
Then

\[
\gen_{L}-1\le m(\gen-1)+(m-1)h_{L}(\alpha).
\]
\end{proposition}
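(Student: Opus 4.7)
The plan is to prove the inequality by comparing Euler characteristics across the degree-$m$ cover $\pi\colon C_{L}\to C$ induced by the inclusion $K\hookrightarrow L$, and then to bound $\deg\pi_{*}\mathcal{O}_{C_{L}}$ from below by embedding a trivial rank-$m$ bundle into a controlled twist built from powers of $\alpha$.

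First I would use that $\pi$ is finite, so $\chi(C_{L},\mathcal{O}_{C_{L}})=\chi(C,\pi_{*}\mathcal{O}_{C_{L}})$, and $\pi_{*}\mathcal{O}_{C_{L}}$ is a locally free sheaf of rank $m$ on $C$. Riemann--Roch on the two curves then gives
\[
1-\gen_{L}\;=\;\deg\pi_{*}\mathcal{O}_{C_{L}}+m(1-\gen),
\]
so the claim is equivalent to the lower bound $\deg\pi_{*}\mathcal{O}_{C_{L}}\ge -(m-1)\,h_{L}(\alpha)$.

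To establish this, let $D$ be the polar divisor of $\alpha$ on $C_{L}$, so that $\deg D=h_{L}(\alpha)=:n$ and each $\alpha^{i}$ with $0\le i\le m-1$ lies in $H^{0}(C_{L},\mathcal{O}_{C_{L}}(iD))\subseteq H^{0}(C_{L},\mathcal{O}_{C_{L}}((m-1)D))$. Because $1,\alpha,\ldots,\alpha^{m-1}$ is a $K$-basis of $L$, the $\mathcal{O}_{C}$-linear map
\[
\mathcal{O}_{C}^{\oplus m}\longrightarrow\pi_{*}\mathcal{O}_{C_{L}}((m-1)D),\qquad(c_{0},\ldots,c_{m-1})\longmapsto\sum_{i=0}^{m-1}c_{i}\alpha^{i},
\]
is well defined (the image has pole divisor bounded by $(m-1)D$) and injective ($K$-linear independence of the $\alpha^{i}$). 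Since both sheaves have rank $m$ and the target is torsion-free, this forces $\deg\pi_{*}\mathcal{O}_{C_{L}}((m-1)D)\ge\deg\mathcal{O}_{C}^{\oplus m}=0$.

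Finally, I would push forward the short exact sequence $0\to\mathcal{O}_{C_{L}}\to\mathcal{O}_{C_{L}}((m-1)D)\to\mathcal{O}_{(m-1)D}\to 0$ (which remains exact since $\pi$ is finite) and compute degrees, obtaining $\deg\pi_{*}\mathcal{O}_{C_{L}}((m-1)D)=\deg\pi_{*}\mathcal{O}_{C_{L}}+(m-1)n$. Rearranging yields $\deg\pi_{*}\mathcal{O}_{C_{L}}\ge-(m-1)n$, which together with the Euler characteristic identity gives the stated bound. The one nontrivial point is the construction of the embedding above and verifying the $(m-1)D$ twist is correct at every fibre; once that is in hand, the degree arithmetic is routine.
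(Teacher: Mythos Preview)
The paper does not actually prove this proposition; it simply quotes it from \cite[Proposition~2.4]{PW2015}. Your argument is correct and self-contained.

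The standard route to this bound --- and almost certainly the one in the cited reference --- is Castelnuovo's (Riemann's) inequality for function fields: if $L$ is the compositum of subfields $K_{1}$ and $K_{2}$ with $[L:K_{i}]=n_{i}$ and $K_{i}$ of genus $g_{i}$, then $g_{L}\le n_{1}g_{1}+n_{2}g_{2}+(n_{1}-1)(n_{2}-1)$. Applying this with $K_{1}=K$ (so $n_{1}=m$, $g_{1}=\gen$) and $K_{2}=\mathbf{k}(\alpha)$ (so $g_{2}=0$ and $n_{2}=[L:\mathbf{k}(\alpha)]=h_{L}(\alpha)$) gives precisely $\gen_{L}-1\le m(\gen-1)+(m-1)h_{L}(\alpha)$ after rearranging. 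Your sheaf-theoretic argument via $\chi(\pi_{*}\mathcal{O}_{C_{L}})$ and the embedding $\mathcal{O}_{C}^{\oplus m}\hookrightarrow\pi_{*}\mathcal{O}_{C_{L}}((m-1)D)$ is in fact one of the classical ways to \emph{prove} Castelnuovo's inequality in this special case (one subfield rational), so the two approaches coincide in substance even though yours is packaged geometrically and does not name the inequality. The only point to watch is that the injection of rank-$m$ bundles indeed has torsion cokernel (which you note), so the degree comparison is legitimate; everything else is routine.
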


In the following proof, we will use, without further prompts, the
standard fact that $h_{L}(a)=[L:K]h(a)$ for every $a\in K$, and
that $\tilde{h}_{L}(P)\le{[L:K]}\tilde{h}(P)$ for every
$P\in K[x_{1},\dots,x_{n}]$.

\begin{proof}[Proof of Theorem \ref{thm:y^e=00003D00003D00003D00003D00003DFg^l}]
We may assume $|S|\ge2$, for otherwise $\mathcal{O}_{S}^{*}=\mathbf{k}^{*}$
and the desired conclusion holds trivially.  For each
$\ell\in\mathbb{N}$, put $\mathbf{u}^{\ell}:=(u_{1}^{\ell},\ldots,u_{n}^{\ell})\in(O_{S}^{*})^{n}$.
We may suppose that there is indeed some $\ell\in\mathbb{N}$ with
\begin{equation}
\ell\ge c_{1}\tilde{h}(F)+c_{2}\max\{1,2\gen-2+|S|\}\label{eq: large_ell}
\end{equation}
such that $F(\mathbf{u^{\ell}})$ is a $d$-th power in $K$, where
$c_{1}$ and $c_{2}$ will be determined in the end of
the proof.


Fix a total ordering on the set of monomials in $K[x_{1},\dots,x_{n}]$
and say that an element $Q\in K[x_{1},\dots,x_{n}]$ is monic if the
coefficient attached to largest monomial appearing in $Q$ with a
non-zero coefficient is $1$. Since $F\ne0$ in $K[x_{1},\dots,x_{n}]$,
it follows from our hypothesis that we may write $F=a\mathbf{x^{i}}G^{d}P$,
where   $P\in K[x_{1},\dots,x_{n}]\setminus K$  is $d$-th
power free monic polynomial
with no  (non-trivial)  monomial factors,  
$G\in K[x_{1},\dots,x_{n}]$ is monic, $\mathbf{x^{i}}\in K[x_{1},\dots,x_{n}]$ 
is a monomial and $a\in K^{*}$. Note that 
\begin{align}
{\normalcolor }\tilde{h}(P)=h(P)\le h(F)\le\tilde{h}(F),\label{heightFP}
\end{align}
and $h(a)\le\tilde{h}(F)$ since $a$ is the coefficients of the largest
monomial appearing in $F$.

Write 
\begin{align}
{\normalcolor {\normalcolor {\normalcolor }}}P=\sum_{\mathbf{i}\in I_{P}}a_{\mathbf{i}}{\bf x}^{\mathbf{i}}\qquad\text{with each \ensuremath{a_{\mathbf{i}}} being nonzero}.\label{P}
\end{align}
By our setting, we have that $|I_{P}|\ge2$.  Choose
a finite subset $S_{P}\subset C(\mathbf{k})$ containing $S$ such
that $a_{\mathbf{i}}\in O_{S_{P}}^{*}$ for each $\mathbf{i}\in I_{P}$,
that each monic irreducible factor of $P$ is in $O_{S_{P}}[x_{1},\dots,x_{n}]$,
and that by \eqref{heightFP} we have 
\begin{align}
2\le|S_{P}|\le|S|+2|I_{P}|\tilde{h}(P)+\deg P\cdot\binom{n+\deg P}{n}\tilde{h}(P)\le|S|+(\deg F+2)\binom{n+\deg{F}}{n}\tilde{h}(F).\label{SP}
\end{align}

  Let $L:=K(\alpha)$ with some $d$-th root
$\alpha$ of $a\mathbf{u^{i}}$.  Since $F(\mathbf{u^{\ell}})$
is a $d$-th power in $K$, it follows that $P(\mathbf{u^{\ell}})$ 
is a $d$-th power in $L$. By Proposition \ref{Prop: genus}, $L$
is the function field of a smooth projective algebraic curve $C_{L}$
of genus $\mathfrak{g}_{L}$ defined over $\mathbf{k}$ with 
\begin{align}
{\normalcolor {\normalcolor {\normalcolor }}}\gen_{L}-1 & \le[L:K](\gen-1)+([L:K]-1)\frac{[L:K]}{d}h(a\mathbf{u}^{i})\cr
 & \le[L:K]\left(\gen-1+\tilde{h}(F)+\deg F\max_{1\le j\le n}h(u_{j})\right)\label{eq: g_L}
\end{align}
since $[L:K]\le d$ and   $\a^{d}=a\mathbf{u}^{\mathbf{i}}$.
 Let $\widetilde{S_{P}}\subset C_{L}(\mathbf{k})$ be the the preimage
of $S_{P}$ under the natural map $C_{L}(\mathbf{k})\rightarrow C(\mathbf{k})$.
Then 
\begin{align}
2\le|\widetilde{S_{P}}|\le[L:K]|S_{P}|.\label{wildtilde_S}
\end{align}


 Now we have that $P\in L[x_{1},\dots,x_{n}]$
is $d$-th power free and has no (non-trivial) monomial factor, that
each irreducible factor of $P$ is in $O_{L,\widetilde{S_{P}}}[x_{1},\dots,x_{n}]$,
and that $P(\mathbf{u^{\ell}})$ is a $d$-th power in $L$. Hence 
we can apply Lemma \ref{dth_power_count} with 
\begin{align}
\varepsilon=\frac{1}{\binom{n+\deg F}{n}^{2}\max_{1\le j\le n}h(u_{j})}
\end{align}
and obtain an integer $m'$ and constants $c_{1}'$, $c_{2}'$ depending
only on $(\varepsilon,\deg F,d)$ such that  we have
either that
\begin{equation}
{\normalcolor }{\normalcolor {\normalcolor {\normalcolor {\normalcolor }}}}N_{L,\widetilde{S_{P}}}(P(\mathbf{u^{\ell}}))\le\varepsilon\ell\max_{1\le j\le n}h_{L}(u_{j})=\frac{\ell[L:K]}{\binom{n+\deg F}{n}^{2}}\;\label{11}
\end{equation}
or that 
\begin{align}
{\normalcolor {\normalcolor {\normalcolor }}}{\normalcolor {\normalcolor {\normalcolor }}}h_{L}(u_{1}^{\ell m_{1}}\cdots u_{n}^{\ell m_{n}}) & \le c_{1}'\tilde{h}_{L}(P)+c_{2}'\max\{1,2\gen_{L}-2+|\widetilde{S_{P}}|\}\label{12}
\end{align}
 for some integers $m_{1},\hdots,m_{n}$, not all zeros with $\sum|m_{i}|\le2m'$.
  First consider the case where \eqref{12} holds; thus
by \eqref{SP}, \eqref{heightFP}, \eqref{eq: g_L} and \eqref{wildtilde_S},
we obtain 
\[
h(u_{1}^{\ell m_{1}}\cdots u_{n}^{\ell m_{n}})\le\left(c_{1}'+c_{2}'(\deg F+2)\binom{n+\deg{F}}{n}\right)\tilde{h}({F})+c_{2}'\max\left\{ 1,2\gen-2+2\tilde{h}(F)+2\deg F\max_{1\le j\le n}h(u_{j})+|S|\right\} .
\]
If $u_{1}^{m_{1}}\cdots u_{n}^{m_{n}}{\in K\setminus{\bf k}}$, then
$h(u_{1}^{\ell m_{1}}\cdots u_{n}^{\ell m_{n}})\ge\ell$ and we would
get a contradiction to \eqref{12}, provided 
\begin{align}
 \ell>\left(c_{1}'+c_{2}'(\deg F+2)\binom{n+\deg F}{n}+2c_{2}'\right)\tilde{h}(F)+c_{2}'\left(1+2\deg F\max_{1\le j\le n}h(u_{j})\right)\max\{1,2\gen-2+|S|\}.\label{l1}
\end{align}

It remains to consider when \eqref{11} occurs. By \eqref{P}, we
have the following equality 
\begin{equation}
P(\mathbf{u^{\ell}})=\sum_{\mathbf{i}\in I_{P}}a_{\mathbf{i}}\mathbf{u^{i\ell}}.\label{eq:unit-eq2}
\end{equation}
First consider the case where the right-hand side of \eqref{eq:unit-eq2}
has a nontrivial vanishing subsum. (This includes the possibility
where $P(\mathbf{u^{\ell}})=0$.) In this case, it must have a smallest
nontrivial vanishing subsum, i.e., for some $I\subset I_{P}$ (with
$|I|\ge2)$ we have 
\begin{equation}
\sum_{\mathbf{i}\in I}a_{\mathbf{i}}\mathbf{u^{i\ell}}=0.\label{eq:smallest_vanish2}
\end{equation}
Corollary \ref{Green} implies that if 
\begin{align}
\ell & >(|I|-1)^{2}(|I|-2)\max\{1,\gen\}+(|I|-1)^{4}h([a_{\mathbf{i}}]_{\mathbf{i}\in I}),\label{l2}
\end{align}
then $\mathbf{u}^{\mathbf{i}-\mathbf{j}}\in{\bf k}$ for any distinct
$\mathbf{i}$, $\mathbf{j}$ in $I$. Since $|I|\le|I_{P}|\le\binom{n+\deg F}{n}$
and $h([a_{\mathbf{i}}]_{\mathbf{i}\in I})\le h(P)\le\tilde{h}(F)$
as well as $\max\{1,\gen\}\le\max\{1,2\gen-2+|S|\}$, we see that
\eqref{l2} holds if 
\begin{equation}
\ell> \binom{n+\deg{F}}{n}^{4} \big(\tilde{h}(F)+\max\{1,2\gen-2+|S|\} \big). \label{eq:l2}
\end{equation}
We also note that any distinct $\mathbf{i}$, $\mathbf{j}$ in $I$
satisfies that $|\mathbf{i}-\mathbf{j}|\le2\deg F$. This settles
down the current case.

It remains to consider the case where the right-hand side of \eqref{eq:unit-eq2}
has no nontrivial vanishing subsum. Pick some $\mathbf{i}_{0}\in I_{P}$.
This case is equivalent to the one where the left-hand side of 
\begin{equation}
\frac{P(\mathbf{u^{\ell}})}{a_{\mathbf{i}_{0}}\mathbf{u}^{\mathbf{i}_{0}\ell}}-\sum_{\mathbf{i}\in I_{P}\setminus\{\mathbf{i}_{0}\}}\frac{a_{\mathbf{i}}\mathbf{u^{i\ell}}}{a_{\mathbf{i}_{0}}\mathbf{u}^{\mathbf{i}_{0}\ell}}=1\label{eq: uniteq3-1}
\end{equation}
has no nontrivial vanishing subsum.  By \eqref{11},
we see that 
\begin{equation}
{\normalcolor }{\normalcolor {\normalcolor {\normalcolor }}}{\normalcolor {\normalcolor {\normalcolor {\normalcolor }}}}N_{S_{P}}(P(\mathbf{u^{\ell}}))\le\frac{\ell}{\binom{n+\deg F}{n}^{2}}.\label{11-1}
\end{equation}
Let $S_{P,\ell}\subset C(\mathbf{k})$ be a subset containing $S_{P}$
and the zeros of $P(\mathbf{u^{\ell}})$ such that 
\[
2\le|S_{P,\ell}|\le|S_{P}|+\overline{N}_{S_{P}}(P(\mathbf{u^{\ell}}))\le|S|+(\deg F+2)\binom{n+\deg{F}}{n}\tilde{h}(F)+\frac{\ell}{\binom{n+\deg F}{n}^{2}}
\]
by \eqref{SP} and \eqref{11-1}.  Applying Theorem \ref{BrMa}
to the $ S_{P,\ell}$-unit equation \eqref{eq: uniteq3-1},
we see that if $\mathbf{u}^{\mathbf{i}-\mathbf{i_{0}}}\not\in{\bf k}$
for some $\mathbf{i}\in I_{P}\setminus\{\mathbf{i}_{0}\}$, then  
\begin{align*}
 \ell\le h(\mathbf{u}^{(\mathbf{i}-\mathbf{i_{0}})\ell}) & \le h(\frac{a_{\mathbf{i}}\mathbf{u^{i\ell}}}{a_{\mathbf{i}_{0}}\mathbf{u}^{\mathbf{i}_{0}\ell}}){\normalcolor {\normalcolor {\normalcolor }}}+h(a_{\mathbf{i}},a_{\mathbf{i}_{0}})\\
 & \le\frac{|I_{P}|^{2}}{2}\left(2\mathfrak{g}-2+|S_{P,\ell}|\right)+h(P)\\
 & \le\frac{\ell}{2}+\left(\frac{1}{2}\deg F+2\right)\binom{n+\deg F}{n}^{3}\tilde{h}(F)+\frac{1}{2}\binom{n+\deg F}{n}^{2}\max\{1,2\gen-2+|S|\}.
\end{align*}
Then again, $\mathbf{u}^{\mathbf{i}-\mathbf{i_{0}}}\in{\bf k}$ for
every $\mathbf{i}\in I_{P}\setminus\{\mathbf{i}_{0}\}$, where we
note that such $\mathbf{i}$ indeed exists and that $|\mathbf{i}-\mathbf{i}_{0}|\le2\deg F$,
if 
\begin{align}
\ell>\left(\deg F+4\right)\binom{n+\deg F}{n}^{3}\tilde{h}(F)+\binom{n+\deg F}{n}^{2}\max\{1,2\gen-2+|S|\}.\label{l3-1}
\end{align}
  We obtain the desired conclusion by taking $m:=\max\{m',\deg F\}$,
and choose $c_{1}$, $c_{2}$ such that \eqref{eq: large_ell} implies
all of \eqref{l1}, \eqref{eq:l2} and \eqref{l3-1}.
\end{proof}

\section{Proof of Theorem \ref{dPisot}}

\label{sectionPisot}  
We need
the following result from \cite[Proposition 4.2]{NW}, where it is
stated for number fields, but it is clear that the proof works for
any field.

\begin{proposition}\label{moving:Prop} Let $f_{1},f_{2}\in K[x_{0},x_{1},\dots,x_{n}]\setminus K[x_{0}]$
be coprime polynomials. Then, the polynomials $f_{1}(m),f_{2}(m)\in K[x_{1},\dots,x_{n}]$
are coprime for all but perhaps finitely many $m\in\mathbb{N}$. 
\end{proposition}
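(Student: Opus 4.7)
The plan is to prove this by computing a resultant in $x_1$, after first applying a Noether-normalization-style coordinate change to preclude specializations that introduce spurious common factors.

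Write $R := K[x_0, x_2, \ldots, x_n]$, so that $K[x_0, x_1, \ldots, x_n] = R[x_1]$. Since both $f_1, f_2 \notin K[x_0]$, each $f_j$ has positive total degree $d_j$ in $x_1, \ldots, x_n$. First, I would apply the $K$-linear substitution $x_i \mapsto x_i + c_i x_1$ for $i \ge 2$ with sufficiently generic constants $c_i \in K$, in order to arrange that the $x_1$-degree of each $f_j$ equals $d_j$ and that its leading $x_1$-coefficient $\ell_j$ actually lies in $K[x_0] \subset R$ (explicitly, $\ell_j$ is the top-degree form of $f_j$ in $x_1, \ldots, x_n$ evaluated at $(1, c_2, \ldots, c_n)$). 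Such a linear change preserves both the coprimality of $f_1, f_2$ in $K[x_0, \ldots, x_n]$ and, for every $m$, the coprimality of $f_1(m), f_2(m)$ in $K[x_1, \ldots, x_n]$.

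Next, form $\rho(x_0, x_2, \ldots, x_n) := \operatorname{Res}_{x_1}(f_1, f_2) \in R$. By Gauss's lemma applied to the UFD $R$, the coprimality of $f_1, f_2$ in $R[x_1]$ passes to $\operatorname{Frac}(R)[x_1]$, and hence $\rho \ne 0$ in $R$. Let $E \subset \mathbb{N}$ be the set of $m$ for which either $\ell_1(m)\ell_2(m) = 0$ or $\rho(m, x_2, \ldots, x_n) = 0$ as a polynomial in $K[x_2, \ldots, x_n]$; this is finite, since each condition is the vanishing of a nonzero polynomial in $x_0$. For $m \in \mathbb{N} \setminus E$, the nonvanishing of $\ell_j(m)$ makes specialization commute with $\operatorname{Res}_{x_1}$, yielding $\operatorname{Res}_{x_1}(f_1(m), f_2(m)) = \rho(m, x_2, \ldots, x_n) \ne 0$, which rules out any common factor of $f_1(m), f_2(m)$ of positive $x_1$-degree. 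Any remaining common factor $g \in K[x_2, \ldots, x_n]$ must divide every $x_1$-coefficient of $f_j(m)$, in particular the leading one $\ell_j(m) \in K^*$, so $g$ is a unit. Therefore $f_1(m), f_2(m)$ are coprime.

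The main obstacle is precisely the last point: before the coordinate change, a specialization at $x_0 = m$ could in principle introduce a common factor living purely in $K[x_2, \ldots, x_n]$, which the resultant in $x_1$ cannot detect. The Noether-normalization step is introduced exactly to force $\ell_j(m)$ to become a nonzero constant in $K$, which collapses any such factor to a unit.
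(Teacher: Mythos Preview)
The paper does not supply its own proof of this proposition; it simply imports the result from \cite[Proposition~4.2]{NW}, remarking that the argument given there for number fields works over any field. Your proof is correct and entirely self-contained. The resultant approach is the natural one, and your Noether-normalization step is exactly the right device for the obstacle you flag at the end: by forcing the leading $x_1$-coefficients $\ell_j$ to lie in $K[x_0]$, it simultaneously guarantees that specialization at $x_0=m$ commutes with $\mathrm{Res}_{x_1}$ (since $\ell_j(m)\in K^*$ for $m\notin E$) and collapses any residual common factor in $K[x_2,\ldots,x_n]$ to a divisor of the unit $\ell_j(m)$.
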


We also recall the following result of Pasten and the third author
on the generalized B{\"u}chi's $n$-th power problem. \begin{theorem}\cite[Theorem 3]{PW2015}\label{buchi_func_field}
Let $K$ be a function field of a smooth projective curve $C$ of
genus $\gen_{K}$ over an algebraically closed field $k$ of characteristic
zero. Let $n\ge2$ and $M$ be integers with 
\[
M>4n\max\{\gen-1,0\}+11n-3.
\]
Let $F\in K[x]\setminus\mathbf{k}[x]$ be a monic polynomial of degree
$n$. Write $F=PH$ where $P\in\mathbf{k}[x]$ is monic, $H\in K[x]$
is monic and $H$ is not divisible by any non-constant polynomial
in $\mathbf{k}[x]$. Let $G_{1},\dots,G_{\ell}\in K[x]$ be the distinct
monic irreducible factors of $H$ (if any) and let $e_{1},\dots,e_{\ell}\ge1$
be integers such that $H=\prod_{j=1}^{\ell}G_{j}^{e_{j}}$. Let $\mu\ge\max_{j}e_{j}$
be an integer and let $a_{1},\dots,a_{M}$ be distinct elements of
$\mathbf{k}$.

If for each $1\le i\le M$, the zero multiplicity of those nonzero
$F(a_{i})\in K^{*}$ at every point $\mathfrak{p}\in C(\mathbf{k})$
is divisible by $\mu$, then $\mu=e_{1}=\cdots=e_{\ell}$ and $H=(\prod_{j=1}^{\ell}G_{j})^{\mu}$.
\end{theorem}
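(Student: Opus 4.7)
The plan is to argue by contradiction. Since the conclusion is equivalent to $e_{1}=\cdots=e_{\ell}=\mu$ (given that the $G_{j}$ are distinct irreducibles), assume some index $j_{0}$ satisfies $e_{j_{0}} < \mu$. I will show that the set of admissible $a \in \mathbf{k}$ is then bounded by $4n\max\{\gen-1,0\}+11n-3$, contradicting the hypothesis on $M$.

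The key observation is geometric: call $a \in \mathbf{k}$ \emph{generic} if $F(a) \ne 0$, every zero of $G_{j_{0}}(a) \in K^{*}$ at some $\mathfrak{p} \in C(\mathbf{k})$ is simple, and no other $G_{j_{1}}(a)$ (with $j_{1} \ne j_{0}$) vanishes at any such $\mathfrak{p}$. For a generic $a$, since $P(a) \in \mathbf{k}^{*}$, the zero multiplicity of $F(a) = P(a)\prod_{j}G_{j}(a)^{e_{j}}$ at any such $\mathfrak{p}$ equals exactly $e_{j_{0}}$. The divisibility hypothesis forces $\mu \mid e_{j_{0}}$, and together with $1 \le e_{j_{0}} \le \mu$ this gives $e_{j_{0}} = \mu$, contradicting our assumption. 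Provided at least one generic $a_i$ exists, we are done; hence it suffices to bound the number of \emph{non-generic} $a \in \mathbf{k}$.

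The non-generic $a$ fall into three groups. (I) Those with $F(a) = 0$: at most $n = \deg F$ values. (II) Those such that $G_{j_{0}}(a)$ and some $G_{j_{1}}(a)$ with $j_{1} \ne j_{0}$ share a zero on $C$: since $G_{j_{0}}$ and $G_{j_{1}}$ are distinct irreducibles in $K[x]$, the resultant $R_{j_{1}} := \mathrm{Res}_{x}(G_{j_{0}},G_{j_{1}}) \in K^{*}$, and any shared zero at $\mathfrak{p}$ forces $\mathfrak{p}$ to be a zero of $R_{j_{1}}$; a count of such pairs $(\mathfrak{p},a)$ summed over $j_{1}$ yields a bound polynomial in $n$ and the heights of the $G_{j}$. (III) Those such that $G_{j_{0}}(a)$ has a multiple zero on $C$: let $\alpha_{j_{0}}$ be a root of $G_{j_{0}}$ in an algebraic closure of $K$ and $L := K(\alpha_{j_{0}})$; then the map from the smooth projective model of $L$ to $\mathbb{P}^{1}_{\mathbf{k}}$ sending a point to the corresponding value of $\alpha_{j_{0}}$ is a finite cover (since $G_{j_{0}}$ has no factor in $\mathbf{k}[x]$), and its branch locus on $\mathbb{P}^{1}_{\mathbf{k}}$ parameterizes exactly the $a$'s under consideration. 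Riemann--Hurwitz bounds this branch locus by $2\gen_{L}-2+2[L:K]$, and $\gen_{L}$ is controlled by Proposition~\ref{Prop: genus}.

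The main obstacle is calibrating these three contributions to produce exactly the bound $4n\max\{\gen-1,0\}+11n-3$. The factor $4n$ on $\gen-1$ should come from combining $[L:K] \le \deg_{x} G_{j_{0}} \le n$ with the coefficient $2$ in $2\gen_{L}-2$ and a further factor of $2$ via Proposition~\ref{Prop: genus}, while the additive constant $11n-3$ must absorb the $n$ zeros from (I), the resultant contribution from (II) (itself bounded using $\sum_{j}\deg_{x}G_{j} \le n$), and the residual degree contribution from Riemann--Hurwitz in (III). Tightening the ingredient inequalities so that they sum sharply to the claimed bound is the principal technical task; it presumably requires careful use of the hypothesis that $H$ has no factor in $\mathbf{k}[x]$ (ensuring verticality-free behavior of the cover in (III)) and of the precise structure of the height bound in Proposition~\ref{Prop: genus}.
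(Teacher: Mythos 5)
This theorem is quoted from \cite[Theorem 3]{PW2015}; the present paper does not supply a proof of it, so there is no internal argument to compare against. Judging your attempt on its own, there is a genuine gap: the count of non-generic $a\in\mathbf{k}$ in your groups (II) and (III) necessarily depends on the heights of the coefficients of $F$, whereas the threshold $M>4n\max\{\gen-1,0\}+11n-3$ depends only on $n$ and $\gen$. In (III), Riemann--Hurwitz for the map $\alpha_{j_0}\colon C_L\to\mathbb{P}^1_{\mathbf{k}}$ gives a branch divisor of degree $2\gen_L-2+2\deg(\alpha_{j_0})$, not $2\gen_L-2+2[L:K]$: since the target is $\mathbb{P}^1$, the correction term is $\deg(\alpha_{j_0})\cdot(2\cdot 0-2)$, and $\deg(\alpha_{j_0})=h_L(\alpha_{j_0})$ counts the poles of $\alpha_{j_0}$, which grows without bound as the height of $G_{j_0}$ does; Proposition~\ref{Prop: genus} likewise only bounds $\gen_L$ in terms of $h_L(\alpha_{j_0})$. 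Already in the simplest instance $G_{j_0}=x-f$ with $f\in K$ of large height, one has $L=K$ and the branch locus of $f\colon C\to\mathbb{P}^1_{\mathbf{k}}$ has size roughly $2\gen-2+2h(f)$, so by choosing $f$ appropriately \emph{all} $M$ prescribed $a_i$ can be non-generic for (III). The analogous issue appears in (II): the number of zeros of $\mathrm{Res}_x(G_{j_0},G_{j_1})$ is governed by the height of that resultant, not by $n$ and $\gen$.

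More fundamentally, your strategy --- locate one generic $a_i$ and read off $e_{j_0}$ from the multiplicity at a simple zero --- cannot yield an $F$-independent threshold, precisely because the non-generic set is not uniformly bounded. A correct proof has to exploit the divisibility hypothesis at \emph{all} $M$ of the $a_i$'s simultaneously: the requirement that every zero multiplicity of $F(a_i)$ be a multiple of $\mu$ forces a definite amount of ramification in the fiber over each $a_i$ of a suitable cover, and a single global Riemann--Hurwitz inequality then caps how many such $a_i$ can exist in terms of $n$ and $\gen$ alone. That the constants ``won't calibrate'' in your sketch is a symptom of this structural mismatch between a pointwise and a global argument, not a mere bookkeeping difficulty.
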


\begin{proof}[Proof of Theorem \ref{dPisot}] Let $u_{1},\dots,u_{n}$
be a (multiplicative) basis of $\Gamma$. Then there exists a Laurent
polynomial $f\in K[x_{0},x_{1},x_{1}^{-1},\dots,x_{n},x_{n}^{-1}]$
such that 
\begin{equation}
b(m)=f(m,u_{1}^{m},\dots,u_{n}^{m}).\label{bf}
\end{equation}
We may assume that $f\in K[x_{0},x_{1},\dots,x_{n}]$ by multiplying
$f$ by $(x_{1}\cdots x_{n})^{hd}$ for some $h\in\mathbb{N}$ without
affecting the assertion.
To avoid trivialities, we assume $f$ is not the zero polynomial.
We also note that the assumption
that $\Gamma\cap k^{*}=\{1\}$ implies that $u_{1},\dots,u_{n}$ are
multiplicatively independent modulo ${\bf k}$.

For each $m\in\mathbb{N}$, it is clear that $\deg f(m,\bullet)\le\deg f$;
since $v_{\p}(a(m))\ge v_{\p}(a)$ for every $\p\in C(\mathbf{k})$
and nonzero $a\in K[x_{0}]$, we also see that $\tilde{h}(f(m,\bullet))\le\tilde{h}(f)$.
Denote by $\mathcal{N}$ the collection of $m\in\mathbb{N}$ such
that $b(m)$ is a $d$-th power in $K$, which is an infinite set
by the assumption. Thus $f(m,u_{1}^{m},\dots,u_{n}^{m})$ is a $d$-th
power in $K$ for each $m\in\mathcal{N}$. Let $S\subset C(\mathbf{k})$
be a finite subset such that $\mathbf{u}:=(u_{1},\ldots,u_{n})\in(\mathcal{O}_{S}^{*})^{n}$.
Recall $\mathbf{x}:=(x_{1},\ldots,x_{n})$. Applying Theorem \ref{thm:y^e=00003D00003D00003D00003D00003DFg^l}
to each $f(m,\bullet)\in K[\mathbf{x}]$ for each $m\in\mathcal{N}$,
we conclude that 
\begin{align}
f(m,\bullet)=\alpha_{m}\mathbf{x}^{\mathbf{i}_{m}}G_{m}^{d}\label{Qm}
\end{align}
for some $\a_{m}\in K^{*}$, monomial $\mathbf{x}^{\mathbf{i}_{m}}\in K[\mathbf{x}]$
and $G_{m}\in K[\mathbf{x}]$, provided that $m\in\mathcal{N}$ is
sufficiently large.

On the other hand, we factor $f$ in $K[x_{0},\mathbf{x}]$ as 
\begin{align}
{\normalcolor }f(x_{0},\mathbf{x})=Q(x_{0})\mathbf{x}^{\mathbf{i}}\prod_{i=1}^{s}P_{i}(x_{0},\mathbf{x})^{e_{i}}\label{factoringf}
\end{align}
with some $Q\in K[x_{0}]$, some monomial $\mathbf{x^{i}}\in K[\mathbf{x}]$,
and some irreducibles $P_{1},\ldots,P_{s}\in K[x_{0},x_{1},\dots,x_{n}]\setminus K[x_{0}]$
without any (non-trivial) monomial factor. Applying Proposition \ref{moving:Prop}
to all $(P_{i},P_{j})$ with $0\le i<j\le s$, where $P_{0}:=x_{1}x_{2}\cdots x_{n}$,
we may replace $\mathcal{N}$ by one of its cofinite subset such that
for all $m\in\mathcal{N}$, $1\le i\le s$ and $1\le j\le s$ with $i\ne j$, we have that $P_{i}(m,\bullet)\in K[\mathbf{x}]$ neither belongs to $K$ nor
has  (nontrivial) monomial factor, and that  $P_{i}(m,\bullet),P_{j}(m,\bullet)$
share no irreducible factor in $K[\mathbf{x}]$. Since each such $P_{i}(m,\bullet)\in K[\mathbf{x}]$
has at least one irreducible factor, by comparing \eqref{factoringf}
with \eqref{Qm}, we see that each $e_{i}$ must be divisible by $d$,
and thus 
\begin{equation}
f(x_{0},\mathbf{x})=Q(x_{0})\mathbf{x}^{\mathbf{i}}G(x_{0},\mathbf{x})^{d}\label{eq: f}
\end{equation}
 for some $G\in K[x_{0},\mathbf{x}]$.  Letting $\b\in K^{*}$ be the leading
coefficient of $Q$, we have following factorization
\begin{equation}
Q=  \b  Q_0 Q_1^d, \label{eq: factor_Q}
\end{equation}
where $Q_0, Q_1\in K[x_0]$ is monic such  that $Q_0$ is $d$-th power free in $K[x_0]$.
Choose $\gamma_1,\gamma_2\in\overline{K}$ such that
\begin{equation}
\gamma_1^{d}=\b\qquad\text{and}\qquad\gamma_2^{d}=\mathbf{u}^{\mathbf{i}}.\label{eq: ga}
\end{equation}
By \eqref{bf},
\eqref{eq: f}, \eqref{eq: factor_Q} and \eqref{eq: ga}, we see that 
\[
Q_0(m)=b(m)\left((\g_1\g_2 ^{m} )^{d} Q_1(m)^d  G(m,u_{1}^{m},\dots,u_{n}^{m})^{d}\right)^{-1}
\]
is a $d$-th power in the function field $K(\g_1,\g_2)$ over $\mathbf{k}$
for these infinitely many $m\in\mathcal{N}$. Now Theorem \ref{buchi_func_field}
implies that $Q_0\in\mathbf{k}[x_{0}]$. Therefore, our desired
conclusion holds with $ R:=Q_0$ and $a$ given
by $m\mapsto\g_1\gamma_2^{m}Q_1(m)G(m,u_{1}^{m},\dots,u_{n}^{m}).$
\end{proof}

\section{Proof of the GCD Theorems}

\label{gcd}

\subsection{Key Theorems}

We first recall some definitions in order to reformulate Theorem 2.2
in \cite{Wa2004}, which deal with the case when the coefficients
of the linear forms are in $\K$ instead of constants, i.e., in $\mathbf{k}$.
Consider $q$ (nonzero) linear forms $L_{j}:=a_{j0}X_{0}+\dots+a_{jn}X_{n}$,
$1\le j\le q$, with each $a_{jk}$ in $K$. Recall that the Weil
function associated with $L_{j}$ at a place $\p$ of $K$ is defined
by sending those $\mathbf{a}\in\mathbb{P}^{n}(\K)$ with $L_{j}(\mathbf{a})\ne0$
to

\[
\lambda_{L_{j},\p}(\mathbf{a}):=v_{\p}(L_{j}(\mathbf{a}))-v_{\p}(\mathbf{a})-v_{\p}(L_{j}).
\]

For any finite-dimensional vector subspace %
\mbox{%
$V\subset K$%
} over %
\mbox{%
$\mathbf{k}$%
} and any positive integer %
\mbox{%
$r$%
}, we denote by %
\mbox{%
$V(r)$%
} the vector space over %
\mbox{%
$\mathbf{k}$%
} spanned by the set of all products of %
\mbox{%
$r$%
} (non-necessarily distinct) elements from %
\mbox{%
$V$%
}. It is easy to show (e.g., %
\mbox{%
\cite[Lemma 6]{Wa1996}%
}) that %
\mbox{%
$\dim V(r+1)\ge\dim V(r)$%
} for each %
\mbox{%
$r$%
} and %
\mbox{%
$\liminf_{r\to\infty}\dim V(r+1)/\dim V(r)=1$%
}. Applying this inequality with %
\mbox{%
$V$%
} replaced by %
\mbox{%
$V(e)$%
}, we see that for each %
\mbox{%
$e\in\mathbb{N}$%
} 
\begin{align}
\liminf_{r\to\infty}\dim V(er+e)/\dim V(er)=1.\label{infvr}
\end{align}

\begin{definition}
Let $E\subset\K$ be a vector space over ${\bf k}$.  We say that $y_{1},\ldots,y_{m}\in K$ are linearly
nondegenerate over $E$ if whenever we have a linear combination $\sum_{i=1}^{m}a_{i}y_{i}=0$
with $a_{i}\in E$, then $a_{i}=0$ for each $i$; otherwise we say
that they are linearly degenerate over $E$. Similarly, a point ${\bf x}=[x_{0}:x_{1}:\cdots:x_{n}]\in\mathbb{P}^{n}(\K)$,
with each $x_{i}\in K$, is said to be linearly degenerate (resp.
linearly nondegenerate) over $E$ if $x_{0},\ldots,x_{0}$ is linearly
degenerate (resp. nondegenerate) over $E$.\end{definition}

We obtain the following variant of Theorem 2.2 in \cite{Wa2004} from
its proof.

\begin{theorem}\label{MSMT} 
Consider the
collection $\mathcal{L}:=\{L_{1},\hdots,L_{q}\}$ of linear forms
$L_{i}=\sum_{j=0}^{n}a_{ij}X_{j}\in\K[X_{0},\hdots,X_{n}]$, $1\le i\le q$,
and define 
\begin{equation}
h(\mathcal{L}):=-\sum_{\mathbf{p}\in C(\mathbf{k})}\min_{1\le i\le q,\,0\le j\le n}v_{\mathbf{p}}(a_{ij}).\label{eq: new_h(H)-1}
\end{equation}
Let $V_{\mathcal{L}}\subset K$ be the vector subspace over $\mathbf{k}$
spanned by the set consisting of all the $a_{ij}$. Suppose that ${\bf a}\in\mathbb{P}^{n}(\K)$
is linearly nondegenerate over $V_{\mathcal{L}}(r+1)$ for some positive
integer $r$, then 
\begin{align}
\sum_{\p\in S}\max_{J}\sum_{j\in J}\lambda_{L_{j},\p}({\bf a})\le\frac{w}{u}(n+1)\left(h({\bf a})+(r+2)h(\mathcal{L})+\frac{nw+w-1}{2}\max\{0,2\gen-2+|S|\}\right),
\end{align}
where the maximum is taken over all subsets $J\subset\{1,\hdots,q\}$
such that those linear forms $L_{j}$ with $j\in J$ are linearly
independent over $\K$, and we denote by $w:=\dim_{\mathbf{k}}V_{\mathcal{L}}(r+1)$
and $u:=\dim_{\mathbf{k}}V_{\mathcal{L}}(r)$. \end{theorem}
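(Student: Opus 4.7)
The plan is to reduce Theorem \ref{MSMT} to the classical constant-coefficient Second Main Theorem (the function-field analogue of Schmidt's subspace theorem) by lifting $\mathbf{a}$ to a higher-dimensional projective space, a moving-target technique developed in \cite{Wa2004}. As the stated theorem is essentially a reformulation of \cite[Theorem 2.2]{Wa2004} extracted from its proof, I will outline only the essential steps.

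First I would fix a $\mathbf{k}$-basis $\{v_{1},\ldots,v_{u}\}$ of $V_{\mathcal{L}}(r)$ and a $\mathbf{k}$-basis $\{c_{1},\ldots,c_{w}\}$ of $V_{\mathcal{L}}(r+1)$, and consider the Segre-like embedding
\[
\Psi:\mathbb{P}^{n}(K)\longrightarrow\mathbb{P}^{(n+1)w-1}(K),\quad [x_{0}:\cdots:x_{n}]\longmapsto[c_{\nu}x_{j}]_{1\le\nu\le w,\,0\le j\le n}.
\]
Since $v_{i}a_{kj}\in V_{\mathcal{L}}\cdot V_{\mathcal{L}}(r)\subset V_{\mathcal{L}}(r+1)$, expanding each $v_{i}a_{kj}$ in $\{c_{\nu}\}$ produces, for every pair $(i,k)$, a constant-coefficient linear form $\widetilde{L}_{ik}\in\mathbf{k}[Y_{\nu j}]$ on $\mathbb{P}^{(n+1)w-1}$ satisfying $v_{i}L_{k}(\mathbf{a})=\widetilde{L}_{ik}(\Psi(\mathbf{a}))$. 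Whenever $L_{k_{1}},\ldots,L_{k_{t}}$ are $K$-linearly independent, the $ut$ forms $\{\widetilde{L}_{ik_{s}}\}_{1\le i\le u,\,1\le s\le t}$ are $\mathbf{k}$-linearly independent in $\mathbf{k}[Y_{\nu j}]$; moreover, the hypothesis that $\mathbf{a}$ is nondegenerate over $V_{\mathcal{L}}(r+1)$ is equivalent, through a direct linear-algebra translation, to $\Psi(\mathbf{a})$ being $\mathbf{k}$-linearly nondegenerate in $\mathbb{P}^{(n+1)w-1}(K)$.

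I would then apply the constant-coefficient Second Main Theorem to $\Psi(\mathbf{a})$ against the collection $\{\widetilde{L}_{ik}\}$, which yields
\[
\sum_{\mathbf{p}\in S}\max_{J'}\sum_{(i,k)\in J'}\lambda_{\widetilde{L}_{ik},\mathbf{p}}(\Psi(\mathbf{a}))\le(n+1)w\Bigl(h(\Psi(\mathbf{a}))+\tfrac{(n+1)w-1}{2}\max\{0,2\gen-2+|S|\}\Bigr),
\]
the maximum being over subsets of $\mathbf{k}$-independent forms of size at most $(n+1)w$. For each admissible $J$ in Theorem \ref{MSMT}, I would restrict $J'$ to $\{1,\ldots,u\}\times J$; the identity $v_{i}L_{k}(\mathbf{a})=\widetilde{L}_{ik}(\Psi(\mathbf{a}))$ then yields a Weil-function comparison of the shape $u\,\lambda_{L_{k},\mathbf{p}}(\mathbf{a})=\sum_{i=1}^{u}\lambda_{\widetilde{L}_{ik},\mathbf{p}}(\Psi(\mathbf{a}))+E_{k,\mathbf{p}}$, where the error $E_{k,\mathbf{p}}$ involves $v_{\mathbf{p}}(v_{i})$, $v_{\mathbf{p}}(\widetilde{L}_{ik})$, $v_{\mathbf{p}}(L_{k})$, $v_{\mathbf{p}}(\mathbf{a})$ and $v_{\mathbf{p}}(\Psi(\mathbf{a}))$. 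Dividing the inequality by $u$ converts the leading factor $(n+1)w$ into $\frac{w}{u}(n+1)$, while $h(\Psi(\mathbf{a}))\le h(\mathbf{a})+(r+1)h(\mathcal{L})$ together with an $O(h(\mathcal{L}))$-bound on $E_{k,\mathbf{p}}$ (using that every $v_{i}$ and $c_{\nu}$ is a product of at most $r+1$ elements of $V_{\mathcal{L}}$) absorbs into the $(r+2)h(\mathcal{L})$ contribution of the claim.

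The main obstacle is the careful bookkeeping: verifying that averaging over the $u$ basis elements of $V_{\mathcal{L}}(r)$ produces exactly the ratio $w/u$ rather than $w$, and that each correction term in $E_{k,\mathbf{p}}$ sums to an $h(\mathcal{L})$-contribution of the correct shape so as to fit within the claimed $(r+2)h(\mathcal{L})$ term. The verification of $\mathbf{k}$-linear nondegeneracy of $\Psi(\mathbf{a})$ from the hypothesis is by contrast routine, as already recorded in \cite{Wa2004}.
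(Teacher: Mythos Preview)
Your proposal is correct and follows precisely the moving-target strategy of \cite{Wa2004} that the paper itself invokes; indeed, the paper does not supply an independent proof but simply states that Theorem~\ref{MSMT} is a variant of \cite[Theorem~2.2]{Wa2004} extracted from its proof. Your outline---the Segre-type lift via a basis of $V_{\mathcal{L}}(r+1)$, the construction of constant-coefficient forms $\widetilde{L}_{ik}$ using the $V_{\mathcal{L}}(r)$-basis, application of the constant-coefficient Second Main Theorem in $\mathbb{P}^{(n+1)w-1}$, and the division by $u$ producing the ratio $w/u$---is exactly that argument, so there is nothing to add.
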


We now formulate the following technical theorem of estimating the
counting function of the gcd. The proof is adapted from \cite{Levin:GCD}
and \cite{levin2019greatest} with more control on the coefficients
of the constructed linear forms so that all the constants involved
can be computed effectively.

\begin{theorem}\label{Refinement} Let $F_{1},F_{2}\in K[x_{1},\cdots,x_{n}]$
be coprime polynomials of the same degree $d>0$. Assume that one
of the coefficients in the expansion of $F_{i}$ is 1 for each $i\in\{1,2\}$.
For every positive integer $m\ge2d$, we let $M:=M_{m}:=2\binom{m+n-d}{n}-\binom{m+n-2d}{n}$
and $M':=M'_{m}:=\binom{m+n}{n}-M$. For every positive integer $r$,
we denote by $V_{F_{1},F_{2}}(r)$ the (finite-dimensional) vector
space over ${\bf k}$ spanned by $\prod_{\alpha}\alpha^{n_{\alpha}}$,
where $\alpha$ runs over all non-zero coefficients of $F_{1}$ and
$F_{2}$, $n_{\alpha}\ge0$ and $\sum n_{\alpha}=r$; we also put
$d_{r}:=\dim_{{\bf k}}V_{F_{1},F_{2}}(r)$. Then $M'_{m}$ has order
$O(m^{n-2})$; moreover, if for some ${\bf g}=(g_{1},\dots,g_{n})\in({\cal O}_{S}^{*})^{n}$ those $\mathbf{g}^{\mathbf{i}}$ with $\mathbf{i}\in\mathbb{Z}_{\ge0}^{n}$
and $|\mathbf{i}|\le m$ are
  linearly nondegenerate over $V_{F_{1},F_{2}}(Mr+1)$ for some positive
integer $m\ge2d$, then we have the following estimate 
\begin{align*}
 & MN_{S,{\rm gcd}}(F_{1}({\bf g}),F_{2}({\bf g}))\\
\le & \left(M'+\frac{d_{Mr}}{d_{M(r-1)}}M-M\right)mn\max_{1\le i\le n}h(g_{i})+cM\left(h(F_{1})+h(F_{2})\right)+c'M\max\{0,2\gen-2+|S|\},
\end{align*}
where $c:=\frac{d_{Mr}}{d_{M(r-1)}}(1+M(r+1))$ and $c':=\frac{d_{Mr}^{2}M}{2d_{M(r-1)}}$.
 \end{theorem}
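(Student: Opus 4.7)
My plan is to follow the Corvaja--Zannier framework, as refined by Levin and Levin--Wang, and apply the function-field Subspace Theorem (Theorem \ref{MSMT}) to a family of linear forms encoding the ideal $(F_1, F_2)$. Write $R_j := K[x_1, \ldots, x_n]_{\le j}$, so $\dim_K R_m = \binom{m+n}{n}$. Since $F_1, F_2$ are coprime of the same degree $d$, one has $F_1 R_{m-d} \cap F_2 R_{m-d} = F_1 F_2 R_{m-2d}$, and inclusion-exclusion yields
\[
M = \dim_K \bigl(F_1 R_{m-d} + F_2 R_{m-d}\bigr) = 2\binom{m+n-d}{n} - \binom{m+n-2d}{n}.
\]
A second-difference computation (vanishing of both the $m^n$ and $m^{n-1}$ coefficients) gives $M' = \binom{m+n}{n} - M = O(m^{n-2})$, reflecting that $(F_1, F_2)$ has codimension $2$.

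\textbf{Construction of linear forms and application of the Subspace Theorem.} First I would pick a basis $\psi_1, \ldots, \psi_N$ of $R_m$ (with $N = \binom{m+n}{n}$) such that $\psi_1, \ldots, \psi_M$ span $F_1 R_{m-d} + F_2 R_{m-d}$, with explicit decompositions $\psi_i = A_i F_1 + B_i F_2$ for some $A_i, B_i \in R_{m-d}$, while $\psi_{M+1}, \ldots, \psi_N$ are monomials completing the basis. Cramer's rule on the change-of-basis matrix, combined with the normalization that one coefficient of each $F_i$ equals $1$, shows that (after clearing denominators) each $\psi_j$ has $\mathbf{k}$-coefficients controlled by an appropriate power of $V_{F_1, F_2}$; the precise accounting yields that products of $r+1$ such coefficients lie in $V_{F_1, F_2}(Mr+1)$, matching exactly the stated nondegeneracy hypothesis. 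Viewing each $\psi_j$ as a linear form in the monomial coordinates $\mathbf{x}^{\mathbf{i}}$ ($\mathbf{i} \in \mathbb{Z}_{\ge 0}^n$, $|\mathbf{i}| \le m$), I apply Theorem \ref{MSMT} to $\mathcal{L} := \{\psi_1, \ldots, \psi_N\}$ at the point $\mathbf{a}_{\mathbf{g}} := [\mathbf{g}^{\mathbf{i}}]_{|\mathbf{i}| \le m} \in \mathbb{P}^{N-1}(K)$, with $w = d_{Mr}$ and $u = d_{M(r-1)}$ arising from a shift by one in the application level.

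\textbf{Local gcd accounting.} For each $j \le M$ and each place $\mathfrak{p}$ at which $F_1(\mathbf{g})$ and $F_2(\mathbf{g})$ share a zero, the identity $\psi_j(\mathbf{g}) = A_j(\mathbf{g}) F_1(\mathbf{g}) + B_j(\mathbf{g}) F_2(\mathbf{g})$ together with $\mathbf{g} \in (\mathcal{O}_S^*)^n$ (so $v_{\mathfrak{p}}(\mathbf{g}^{\mathbf{i}}) = 0$ for every $\mathbf{i}$ when $\mathfrak{p} \notin S$) gives
\[
\lambda_{\psi_j, \mathfrak{p}}(\mathbf{a}_{\mathbf{g}}) \ge \min\bigl\{v_{\mathfrak{p}}^{0}(F_1(\mathbf{g})), v_{\mathfrak{p}}^{0}(F_2(\mathbf{g}))\bigr\}
\]
up to a correction absorbed into $h(\mathcal{L})$. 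Enlarging $S$ to $\widetilde{S}$ that includes these common-zero places and applying Theorem \ref{MSMT} produces $M \cdot N_{S, {\rm gcd}}(F_1(\mathbf{g}), F_2(\mathbf{g}))$ on the left after the standard Corvaja--Zannier manipulation, and on the right the bound $\tfrac{w}{u} N \bigl(h(\mathbf{a}_{\mathbf{g}}) + (r+2) h(\mathcal{L}) + \tfrac{Nw+w-1}{2}\max\{0, 2\gen-2+|S|\}\bigr)$. Using $h(\mathbf{a}_{\mathbf{g}}) \le m n \max_i h(g_i)$, the split $N = M + M'$, and $h(\mathcal{L}) \le c(h(F_1) + h(F_2))$ (from the Cramer formulas), one rearranges to the stated inequality; the coefficient $M' + M(d_{Mr}/d_{M(r-1)} - 1)$ of $m n \max_i h(g_i)$ arises because the $M'$ pure-monomial forms $\psi_{M+1}, \ldots, \psi_N$ enter the global height term but not the local gcd sum, together with the multiplicative factor $\tfrac{w}{u} - 1$ applied to the remaining $M$ forms.

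\textbf{Main obstacle.} The principal technical challenge is the precise coefficient bookkeeping needed to hit the stated constants $c$ and $c'$: obtaining exponent $Mr+1$ (rather than the looser $M(r+1)$) in the nondegeneracy hypothesis, and identifying $w/u$ with $d_{Mr}/d_{M(r-1)}$. Both points rely on factoring out a normalized $1$-coefficient from Cramer's-rule determinants and on invoking Theorem \ref{MSMT} at the shifted level $r-1$ after passing to the appropriate product space. The structural payoff is that $M'/M \to 0$ as $m \to \infty$ (since $M' = O(m^{n-2})$ while $M \sim m^n/n!$) and $d_{Mr}/d_{M(r-1)} \to 1$ as $r \to \infty$ by \eqref{infvr}, so that in downstream applications such as Theorem \ref{movinggcdunit} one can first choose $m$ large to make $M'/M < \varepsilon/2$ and then $r$ large to make $M(d_{Mr}/d_{M(r-1)} - 1) < \varepsilon/2$, producing an effective constant in front of $\max_i h(g_i)$ as small as any prescribed $\varepsilon$.
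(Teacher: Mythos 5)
Your proposal follows a genuinely different route from the paper, but the route has a fatal gap specific to the function-field setting.

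You set up the Schmidt Subspace Theorem (Theorem~\ref{MSMT}) in $\mathbb{P}^{N-1}$, $N=\binom{m+n}{n}$, with a basis of $K[\mathbf{x}]_m$ as linear forms, and you propose the classic Corvaja--Zannier maneuver: enlarge $S$ to $\widetilde{S}$ to include the places where $F_1(\mathbf{g})$ and $F_2(\mathbf{g})$ share a zero, so that the local Weil contributions $\lambda_{\psi_j,\mathfrak{p}}(\mathbf{a}_{\mathbf{g}})$ at those new places collect the gcd. That trick works over number fields precisely because there the Subspace Theorem's error is $O(\varepsilon\, h)$ and does not grow with $|S|$. Here, however, the error term in Theorem~\ref{MSMT} is $\frac{w}{u}(n+1)\cdot\frac{nw+w-1}{2}\max\{0,2\mathfrak{g}-2+|S|\}$, i.e.\ it is \emph{linear in} $|S|$. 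Replacing $S$ by $\widetilde{S}$ adds roughly $\overline{N}_{S,\gcd}(F_1(\mathbf{g}),F_2(\mathbf{g}))$ to $|S|$, so the error side acquires a term $\ge \frac{w}{u}N\cdot\frac{Nw+w-1}{2}\cdot\overline{N}_{S,\gcd}$. Since this coefficient vastly exceeds $M\sim N$ (the coefficient in front of the gcd you collect on the left), the resulting inequality is vacuous: you cannot isolate $N_{S,\gcd}$ at all. Your own write-up silently keeps $|S|$ rather than $|\widetilde{S}|$ on the right-hand side, which is where the gap is hidden.

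The paper's proof of Theorem~\ref{Refinement} avoids enlarging $S$ entirely. It works in $\mathbb{P}^{M-1}$ with the point $\Phi(\mathbf{g})=(\phi_1(\mathbf{g}),\dots,\phi_M(\mathbf{g}))$ (a basis of $(F_1,F_2)_m$, not of all of $K[\mathbf{x}]_m$). For each fixed $\mathfrak{p}\in S$ it chooses, greedily in $v_{\mathfrak{p}}(\mathbf{g}^{\mathbf{i}})$, a monomial basis $B_{\mathfrak{p}}$ of the quotient $V_m=K[\mathbf{x}]_m/(F_1,F_2)_m$, and builds linear forms $L_{\mathfrak{p},i}$ that map $\Phi(\mathbf{x})$ back to controlled monomial combinations (with coefficients in $V_{F_1,F_2}(M-1)$ via Cramer's rule, explaining the hypothesis involving $V_{F_1,F_2}(Mr+1)$). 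Crucially, the gcd never appears as a Weil-function sum over new places; it is extracted from the total height via the identity \eqref{phgcd}, namely $N_{S,\gcd}\le -h(\Phi(\mathbf{g}))-\sum_{\mathfrak{p}\in S}v_{\mathfrak{p}}(\Phi(\mathbf{g}))+\sum_{\mathfrak{p}\notin S}-(v_{\mathfrak{p}}(F_1)+v_{\mathfrak{p}}(F_2))$, and then \eqref{eq:keyineq} converts the Subspace-Theorem sum over $\mathfrak{p}\in S$ into a lower bound involving $MN_{S,\gcd}$. This is the structural point you are missing. (There is also a minor arithmetic slip in your rearrangement: $\frac{w}{u}N - M = M(\frac{w}{u}-1)+\frac{w}{u}M'$, not $M'+M(\frac{w}{u}-1)$, but that is secondary to the $|S|$-enlargement issue.)
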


\begin{proof} We first make some convenient settings. We denote by
$\mathbf{x}:=(x_{1},\ldots,x_{n})$ an $n$-tuple of $n$ (algebraically
independent) variables. Let $m$ be a positive integer. For a subset
$T\subset K[\mathbf{x}]$, we let 
\[
T_{m}=\{f\in T\,:\deg f\le m\}.
\]
By the assumption that one of the coefficients in the expansion of
$F_{i}$ is $1$ for each $i\in\{1,2\}$, we note that

\begin{equation}
v_{\p}(F_{i})\le0\qquad\text{for every }i\in\{1,2\}\text{ and }\p\in C(\mathbf{k}).\label{eq: vF<=00003D00003D00003D00003D0}
\end{equation}

Consider the ideal $(F_{1},F_{2})\subset K[\mathbf{x}]$. If $(F_{1},F_{2})=(1)=K[\mathbf{x}]$,
then it is elementary to show that $N_{S,{\rm gcd}}(F_{1}({\bf g}),F_{2}({\bf g}))$
is bounded by some constant independent of ${\bf g}$. Therefore,
we assume that the ideal $(F_{1},F_{2})$ is proper. For ${\bf i}=(i_{1},\cdots,i_{n})\in\mathbb{Z}_{\ge0}^{n}$,
we let ${\bf x}^{{\bf i}}:=x_{1}^{i_{1}}\cdots x_{n}^{i_{n}}$, and
${\bf g}^{{\bf i}}:=g_{1}^{i_{1}}\cdots g_{n}^{i_{n}}$. By Lemma
2.11 of \cite{levin2019greatest}, we note that $M=\dim_{K}(F_{1},F_{2})_{m}$,
and may choose a basis $\{\phi_{1},\hdots,\phi_{M}\}$ of the $K$-vector
space $(F_{1},F_{2})_{m}$ such that each $\phi_{j}$ is of the form
${\bf x}^{{\bf i}}F_{j}$ with $|\mathbf{i}|:=i_{1}+\cdots+i_{n}\le m-d$
and $j\in\{1,2\}$. Put 
\[
\Phi:=(\phi_{1},\hdots,\phi_{M})\qquad\text{and}\qquad\Phi({\bf g}):=(\phi_{1}({\bf g}),\hdots,\phi_{M}({\bf g})).
\]

For each $\p\in S$, we construct a subset $B_{\p}\subset K[\mathbf{x}]_{m}$,
consisting of only monomials, whose images in the $K$-linear space
$V_{m}:=K[\mathbf{x}]_{m}/(F_{1},F_{2})_{m}$ form one of its bases
as follows. 
Choose a monomial ${\bf x}^{{\bf i}_{\p,1}}\in K[x_{1},\hdots,x_{n}]_{m}$
so that $v_{\p}({\bf g}^{{\bf i}_{\p,1}})$ is maximum subject to
the condition ${\bf x}^{{\bf i}_{\p,1}}\notin(F_{1},F_{2}).$ If ${\bf x}^{{\bf i}_{\p,1}},\hdots,{\bf x}^{{\bf i}_{\p,j}}$
have been constructed such that their images in $V_{m}$ are $K$-linearly
independent but don't span the whole $V_{m}$, then we let ${\bf x}^{{\bf i}_{\p,j+1}}\in K[x_{1},\hdots,x_{n}]_{m}$
be a monomial such that $v_{\p}({\bf g}^{{\bf i}_{\p,j+1}})$ is maximum
subject to the condition that the images of ${\bf x}^{{\bf i}_{\p,1}},\hdots,{\bf x}^{{\bf i}_{\p,j+1}}$
in $V_{m}$ are $K$-linearly independent; otherwise we stop. Because
$\dim_{K}V_{m}=\dim_{K}K[\mathbf{x}]_{m}-\dim_{K}(F_{1},F_{2})_{m}=\binom{m+n}{n}-M=M'$,
we will eventually stop and obtain that $B_{\p}:=\{{\bf x}^{{\bf i}_{\p,1}},\hdots,{\bf x}^{{\bf i}_{\p,M'}}\}\subset K[\mathbf{x}]_{m}$
is a set of monomials whose images in $V_{m}$ form one of its $K$-linear
bases such that 
\begin{equation}
v_{\p}({\bf g}^{{\bf i}_{\p,1}})\ge v_{\p}({\bf g}^{{\bf i}_{\p,2}})\ge\cdots\ge v_{\p}({\bf g}^{{\bf i}_{\p,M'}})\ge v_{\p}({\bf g}^{{\bf i}_{\p}(i)})\label{eq: decreasing-v}
\end{equation}
for each $i\in\{1,\ldots,M\}$, where we denote by $\{{\bf i}_{\p}(1),\hdots,{\bf i}_{\p}(M)\}$
the set of those ${\bf i}\in\mathbb{Z}_{\ge0}^{n}$ with $|{\bf i}|\le m$
and $\mathbf{i}\notin I_{\p}$, where 
\begin{equation}
I_{\p}:=\{{\bf i}_{\p,1},\hdots,{\bf i}_{\p,M'}\}.\label{eq: I_p}
\end{equation}

By direction calculation, we find that $M'_{m}=\binom{m+n}{n}-2\binom{m+n-d}{n}+\binom{m+n-2d}{n}=O(m^{n-2})$.
Alternatively, since $F_{1}$ and $F_{2}$ are coprime, the ideal
$(F,G)$ defines a closed subset of $\mathbb{P}^{n}$ of codimension
at least 2, and it follows from the theory of Hilbert functions and
Hilbert polynomials that $M'_{m}=\dim_{K}V_{m}=O(m^{n-2})$.

For each $i\in\{1,\ldots,M\}$, we have 
\[
{\bf x}^{{\bf i_{\p}}(i)}+\sum_{j=1}^{M'}c_{\p,i,j}{\bf x}^{{\bf i}_{\p,j}}\in(F_{1},F_{2})_{m}
\]
for some (unique) choice of coefficients $c_{\p,i,j}\in\K$; by expressing
${\bf x}^{{\bf i_{\p}}(i)}+\sum_{j=1}^{M'}c_{\p,i,j}{\bf x}^{{\bf i}_{\p j}}$
as a (unique) $K$-linear combination of $\phi_{1},\hdots,\phi_{M}$,
we let 
\begin{equation}
L_{\p,i}:=\sum_{\ell=1}^{M}b_{\p,i,\ell}y_{\ell}\in K[y_{1},\ldots,y_{M}]\label{eq:L_p,i}
\end{equation}
be a linear form over $K$ such that 
\begin{align}
L_{\p,i}(\Phi(\mathbf{x}))=c_{\p}\left({\bf x}^{{\bf i_{\p}}(i)}+\sum_{j=1}^{M'}c_{\p,i,j}{\bf x}^{{\bf i}_{\p,j}}\right),\label{cij}
\end{align}
where $c_{\p}\in K^{*}$ will be chosen later.

By the choice of the $\phi_{\ell}$, we may write 
\begin{align}
\phi_{\ell}=\sum_{s=1}^{M}\alpha_{\p,\ell,s}{\bf x}^{{\bf i}_{\p}(s)}+\sum_{j=1}^{M'}\alpha_{\p,\ell,{\bf i}_{\p,j}}{\bf x}^{{\bf i}_{\p,j}},\label{cij3}
\end{align}
where both $\alpha_{\p,\ell,i}$ and $\alpha_{\p,\ell,{\bf i}_{\p,j}}$
are coefficients of either $F_{1}$ or $F_{2}$, thus 
\begin{equation}
\min\{v_{\p}(\alpha_{\p,\ell,{\bf i}_{\p,j}}),v_{\p}(\alpha_{\p,\ell,i})\}\ge v_{\p}(F_{1})+v_{\p}(F_{2})\qquad\text{for each }\ell,i,j.\label{eq: v_=00003D00003D00003D00005Calpha}
\end{equation}
Combining \eqref{eq:L_p,i} and \eqref{cij3}, we have 
\begin{align}
L_{\p,i}(\Phi(\mathbf{x}))=\sum_{\ell=1}^{M}b_{\p,i,\ell}\left(\sum_{s=1}^{M}\alpha_{\p,\ell,s}{\bf x}^{{\bf i}_{\p}(s)}+\sum_{j=1}^{M'}\alpha_{\p,\ell,{\bf i}_{\p,j}}{\bf x}^{{\bf i}_{\p,j}}\right).\label{cij4}
\end{align}
Note that if we take $c_{\p}=1$, then by comparing \eqref{cij} with
\eqref{cij4}, we find that 
\[
\det(b_{\p,i,\ell})_{1\le\ell,i\le M}\det(\alpha_{\p,\ell,s})_{1\le\ell,s\le M}=1.
\]
From now on, we let 
\begin{equation}
c_{\p}:=\det(\alpha_{\p,\ell,s})_{1\le\ell,s\le M}\ne0\label{eq: c_p}
\end{equation}
and note that $c_{\p}\in$$V_{F_{1},F_{2}}(M)$. With this choice
of $c_{\p}$, we compare \eqref{cij} with \eqref{cij4} again and
see that the inverse of $(\alpha_{\p,\ell,s})_{1\le\ell,s\le M}$
is $c_{\p}^{-1}(b_{\p,i,\ell})_{1\le\ell,i\le M}$, which shows that
\begin{equation}
b_{\p,i,\ell}\in V_{F_{1},F_{2}}(M-1)\label{eq: b_pil_in}
\end{equation}
for each $i$, $\ell$ by Cramer's rule. This comparison also gives
\begin{align}
c_{\p} & =\sum_{\ell=1}^{M}b_{\p,i,\ell}\alpha_{\p,\ell,i}\qquad\text{for each }1\le i\le M,\label{eq: key2}\\
c_{\p}c_{\p,i,j} & =\sum_{\ell=1}^{M}b_{\p,i,\ell}\alpha_{\p,\ell,{\bf i}_{\p,j}}\qquad\text{for each }1\le i\le M\text{ and }1\le j\le M'.\label{eq: key2-1}
\end{align}
From \eqref{eq: decreasing-v}, \eqref{cij}, \eqref{eq: key2}, \eqref{eq: key2-1},
\eqref{eq: v_=00003D00003D00003D00005Calpha} and \eqref{eq: vF<=00003D00003D00003D00003D0},
we have

\begin{align}
v_{\p}(L_{\p,i}(\Phi({\bf g}))) & \ge v_{\p}({\bf g}^{{\bf i}_{\p}(i)})+\min_{j}\{v_{\p}(c_{\p}),v_{\p}(c_{\p}c_{\p,i,j})\}\label{keyinequality2}\\
 & \ge v_{\p}({\bf g}^{{\bf i}_{\p}(i)})+\min_{j}\{\min_{\ell}v_{\p}(b_{\p,i,\ell})+\min_{\ell}v_{\p}(\alpha_{\p,\ell,i}),\min_{\ell}v_{\p}(b_{\p,i,\ell})+\min_{\ell}v_{\p}(\alpha_{\p,\ell,{\bf i}_{\p,j}})\}\nonumber \\
 & \ge v_{\p}({\bf g}^{{\bf i}_{\p}(i)})+\min_{\ell}v_{\p}(b_{\p,i,\ell})+v_{\p}(F_{1})+v_{\p}(F_{2}),\nonumber 
\end{align}
which gives the following key inequality 
\begin{equation}
v_{\p}(L_{\p,i}(\Phi({\bf g})))-v_{\p}(L_{\p,i})\ge v_{\p}({\bf g}^{{\bf i}_{\p}(i)})+v_{\p}(F_{1})+v_{\p}(F_{2}).\label{eq:keyineq}
\end{equation}

Thus, by the construction of \eqref{eq: I_p} and the fact that ${\bf g}^{{\bf i}}\in\mathcal{O}_{S}^{*}$
for each $\mathbf{i}$, we have 
\begin{align}
\sum_{\p\in S}\sum_{1\le i\le M}v_{\p}({\bf g}^{{\bf i}_{\p}(i)}) & =\sum_{\p\in S}\sum_{|{\bf i}|\le m}v_{\p}({\bf g}^{{\bf i}})-\sum_{\p\in S}\sum_{|{\bf i}|\le m,{\bf i}\in I_{\p}}v_{\p}({\bf g}^{{\bf i}})\label{eq:main_term}\\
 & \ge\sum_{|{\bf i}|\le m}\sum_{\p\in S}v_{\p}({\bf g}^{{\bf i}})-|I_{\p}|m\sum_{\p\in S}\sum_{j=1}^{n}v_{\p}^{0}(g_{j})\nonumber \\
 & =-M'm\sum_{j=1}^{n}h(g_{j})\nonumber \\
 & \ge-M'mn\max_{1\le j\le n}h(g_{j}).\nonumber 
\end{align}
By the choice of these $\phi_{i}\in\K[x_{1},\hdots,x_{n}]_{m}$, together
with \eqref{eq: vF<=00003D00003D00003D00003D0}, we have 
\begin{align*}
v_{\p}(\phi_{i}({\bf g})) & \ge m\min\{v_{\p}({\bf g}),0\}+v_{\p}(F_{1})+v_{\p}(F_{2})\\
 & \ge-m\sum_{j=1}^{n}v_{\p}^{\infty}(g_{j})+v_{\p}(F_{1})+v_{\p}(F_{2})
\end{align*}
for every $\p\in C(\mathbf{k})$. It follows that

\begin{align}
h(\Phi({\bf g}))\le mn\max_{1\le i\le n}h(g_{i})+h(F_{1})+h(F_{2}).\label{phiup}
\end{align}
Also, with the fact that ${\bf g}^{{\bf i}}\in\mathcal{O}_{S}^{*}$
for each $\mathbf{i}$, we note for every $\p\notin S$ that $v_{\p}(\phi_{i}({\bf g}))=v_{\p}(F_{\epsilon_{i}}({\bf g}))\ge v_{\p}(F_{\epsilon_{i}})$
with some $\epsilon_{i}\in\{1,2\}$, and hence, together with \eqref{eq: vF<=00003D00003D00003D00003D0},
we have that 
\begin{equation}
\begin{split}v_{\p}(\phi_{i}({\bf g})) & \ge\min\{v_{\p}(F_{1}({\bf g})),v_{\p}(F_{2}({\bf g}))\}\\
 & \ge\min\{v_{\p}^{0}(F_{1}({\bf g})),v_{\p}^{0}(F_{2}({\bf g}))\}+v_{\p}(F_{1})+v_{\p}(F_{2}).
\end{split}
\label{phinotp}
\end{equation}
By \eqref{phinotp}, we have 
\begin{align}
N_{S,{\rm gcd}}(F_{1}({\bf g}),F_{2}({\bf g})) & =\sum_{\p\notin S}\min\{v_{\p}^{0}(F_{1}({\bf g})),v_{\p}^{0}(F_{2}({\bf g}))\}\nonumber \\
 & \le\sum_{\p\notin S}\min_{i}v_{\p}(\phi_{i}({\bf g}))+\sum_{\p\notin S}-(v_{\p}(F_{1})+v_{\p}(F_{2}))\nonumber \\
 & =\sum_{\p\not\in S}v_{\p}(\Phi({\bf g}))+\sum_{\p\notin S}-(v_{\p}(F_{1})+v_{\p}(F_{2}))\nonumber \\
 & =-h(\Phi({\bf g}))-\sum_{\p\in S}v_{\p}(\Phi({\bf g}))+\sum_{\p\notin S}-(v_{\p}(F_{1})+v_{\p}(F_{2})).\label{phgcd}
\end{align}

By \eqref{eq: b_pil_in}, we may choose a finite collection %
\mbox{%
$\mathcal{L}$%
} of linear forms over %
\mbox{%
$K$%
} such that %
\mbox{%
$L_{\p,i}\in\mathcal{L}$%
} for each %
\mbox{%
$\p\in S$%
} and %
\mbox{%
$i\in\{1,\ldots M\}$%
}, that the finite-dimensional %
\mbox{%
$\mathbf{k}$%
}-linear subspace %
\mbox{%
$V:=V_{F_{1},F_{2}}(M)$%
} is spanned by the set of all coefficients of linear forms in %
\mbox{%
$\mathcal{L}$%
}, and that\\
 \raisebox{-\belowdisplayshortskip}{%
\noindent\parbox[b]{1\linewidth}{%
\begin{equation}
h(\mathcal{L})\le M(h(F_{1})+h(F_{2})).\label{eq: h(L)}
\end{equation}
}}

\noindent Since 
those $\mathbf{g}^{\mathbf{i}}$ with $\mathbf{i}\in\mathbb{Z}_{\ge0}^{n}$
and $|\mathbf{i}|\le m$ are
 linearly nondegenerate over $V_{F_{1},F_{2}}(Mr+1)$, we must have
that $\Phi({\bf g})\in\mathbb{P}^{M-1}(K)$ is linearly nondegenerate
over $V_{F_{1},F_{2}}(Mr)=V(r)$. By \eqref{eq: c_p}, we note that
elements from $\{L_{\p,i}(\Phi(\mathbf{x}))\,|\,1\le i\le M\}$ are
linearly independent over $\K$; thus the linear forms $L_{\p,i}$,
$1\le i\le M$, are linearly independent over $K$. Noting that $d_{Mr}=\dim_{\mathbf{k}}V(r)$
and $d_{M(r-1)}:=\dim_{\mathbf{k}}V(r-1)$, we obtain from Theorem
\ref{MSMT} and \eqref{eq: h(L)} that 
\begin{align}
\sum_{\p\in S}\sum_{1\le i\le M}\lambda_{L_{\p,i},\p}(\Phi({\bf g})) & \le\frac{d_{Mr}M}{d_{M(r-1)}}\left(h(\Phi({\bf g)})+(r+1)M(h(F_{1})+h(F_{2}))+\frac{Md_{Mr}-1}{2}\max\{0,2\gen-2+|S|\}\right).\label{eq: MSMT}
\end{align}
Together with \eqref{eq:keyineq}, \eqref{eq:main_term} and \eqref{phgcd},
we have 
\begin{equation}
\begin{split}\sum_{\p\in S}\sum_{1\le i\le M}\lambda_{L_{\p,i},\p}(\Phi({\bf g})) & =\sum_{\p\in S}\sum_{1\le i\le M}\left(v_{\p}(L_{\p,i}(\Phi({\bf g})))-v_{\p}(L_{\p,i})\right)-M\sum_{\p\in S}v_{\p}(\Phi({\bf g}))\\
 & \ge\sum_{\p\in S}\sum_{1\le i\le M}v_{\p}({\bf g}^{{\bf i}_{\p}(i)})+M\sum_{\p\in S}\left(v_{\p}(F_{1})+v_{\p}(F_{2})\right)+MN_{S,{\rm gcd}}(F_{1}({\bf g}),F_{2}({\bf g}))\\
 & \qquad+Mh(\Phi({\bf g)})+M\sum_{\p\notin S}(v_{\p}(F_{1})+v_{\p}(F_{2}))\\
 & \ge-M'mn\max_{1\le j\le n}h(g_{j})+MN_{S,{\rm gcd}}(F_{1}({\bf g}),F_{2}({\bf g}))+M(h(\Phi({\bf g)})-h(F_{1})-h(F_{2})).
\end{split}
\label{eq:MSMT-LHS}
\end{equation}
Combining \eqref{eq:MSMT-LHS} with \eqref{eq: MSMT} and \eqref{phiup},
we get

\[
\begin{split} & MN_{S,{\rm gcd}}(F_{1}({\bf g}),F_{2}({\bf g}))\label{useSMT2}\\
\le & M'mn\max_{1\le j\le n}h(g_{j})+M\left(\frac{d_{Mr}}{d_{M(r-1)}}-1\right)h(\Phi({\bf g)})+\left(\frac{d_{Mr}}{d_{M(r-1)}}M^{2}(r+1)+M\right)\left(h(F_{1})+h(F_{2})\right)\\
 & +\frac{d_{Mr}M(Md_{Mr}-1)}{2d_{M(r-1)}}\max\{0,2\gen-2+|S|\}\\
\le & \left(M'+\frac{d_{Mr}}{d_{M(r-1)}}M-M\right)mn\max_{1\le i\le n}h(g_{i})+\left(\frac{d_{Mr}}{d_{M(r-1)}}(1+M(r+1))\right)M\left(h(F_{1})+h(F_{2})\right)\\
 & +\frac{d_{Mr}^{2}M^{2}}{2d_{M(r-1)}}\max\{0,2\gen-2+|S|\}.
\end{split}
\]

\end{proof}

\begin{theorem}\label{=00003D00003D00003D000024S=00003D00003D00003D000024 part}
Let $F\in K[x_{1},\cdots,x_{n}]$ be a polynomials of degree $d>0$
that does not vanish at $(0,\hdots,0)$. Assume that one of the coefficients
of $F$ is 1. For each %
\mbox{%
$r\in\mathbb{N}$%
}, denote by $V_{F}(r)$ the (finite-dimensional) vector space over
${\bf k}$ spanned by $\prod_{\alpha}\alpha^{n_{\alpha}}$, where
$\alpha$ runs over all (non-zero) coefficients of $F$ with $n_{\alpha}\ge0$
and $\sum n_{\alpha}=r$; put $d_{r}:=\dim_{{\bf k}}V_{F}(r)$. Put
$N:=\binom{n+d}{n}-1$. Let ${\bf g}=(g_{1},\dots,g_{n})\in({\cal O}_{S}^{*})^{n}$.
Suppose 
those $\mathbf{g}^{\mathbf{i}}$ with $\mathbf{i}\in\mathbb{Z}_{\ge0}^{n}$
and $|\mathbf{i}|\le d$ are linearly nondegenerate over $V_{F}(r)$. Then we have the following
estimate 
\begin{align*}
\sum_{\p\in S}v_{\p}^{0}(F({\bf g}))\le & (\frac{d_{r}}{d_{r-1}}-1)(N+1)dn\max_{1\le i\le n}h(g_{i})+\frac{d_{r}(N+1)}{d_{r-1}}(r+1)h(F)\\
 & +\frac{d_{r}(N+1)(Nd_{r}+d_{r}-1)}{2d_{r-1}}\max\{0,2\gen-2+|S|\}.
\end{align*}

\end{theorem}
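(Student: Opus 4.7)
The plan is to apply Theorem~\ref{MSMT} to the $d$-th Veronese-type embedding $\Phi(\mathbf{x}) := (1, \mathbf{x}^{\mathbf{i}_1}, \ldots, \mathbf{x}^{\mathbf{i}_N}) \in \mathbb{P}^N(K)$, where $\mathbf{i}_0 = \mathbf{0}$ and $\mathbf{i}_1, \ldots, \mathbf{i}_N$ enumerate the $N+1$ multi-indices with $|\mathbf{i}| \le d$. Under this embedding the hypersurface cut out by $F$ becomes the hyperplane $L_F(\mathbf{y}) := \sum_{j=0}^N a_{\mathbf{i}_j} y_j$, with $L_F(\Phi(\mathbf{g})) = F(\mathbf{g})$. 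I work with the collection $\mathcal{L} := \{L_F, y_0, y_1, \ldots, y_N\}$ of $N+2$ linear forms over $K$. Since one coefficient of $F$ equals $1$, the $\mathbf{k}$-span $V_\mathcal{L}$ of coefficients of forms in $\mathcal{L}$ is exactly $V_F(1)$, whence $V_\mathcal{L}(r') = V_F(r')$ for every $r' \ge 0$; moreover the assumption $F(\mathbf{0}) \ne 0$ gives $a_{\mathbf{i}_0} \ne 0$, and one checks that $h(\mathcal{L}) = h(F)$.

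For each $\p \in S$, I make a $\p$-adaptive choice of $N+1$ linearly independent forms $J_\p \subset \mathcal{L}$: pick $\pi_\p(0) \in I_F := \{j : a_{\mathbf{i}_j} \ne 0\}$ minimizing $v_\p(\mathbf{g}^{\mathbf{i}_{\pi_\p(0)}})$ over $j \in I_F$, write $v_\p^\star := v_\p(\mathbf{g}^{\mathbf{i}_{\pi_\p(0)}})$, and set $J_\p := \{L_F\} \cup \{y_k : k \ne \pi_\p(0)\}$; linear independence of $J_\p$ over $K$ follows from $a_{\mathbf{i}_{\pi_\p(0)}} \ne 0$. Applying Theorem~\ref{MSMT} with its integer parameter taken to be $r-1$ (so that the nondegeneracy hypothesis, over $V_\mathcal{L}(r) = V_F(r)$, coincides with the given one) then yields
\[
\sum_{\p \in S} \sum_{j \in J_\p} \lambda_{j,\p}(\Phi(\mathbf{g})) \le \frac{d_r(N+1)}{d_{r-1}} \Bigl( h(\Phi(\mathbf{g})) + (r+1) h(F) + \tfrac{(N+1)d_r - 1}{2} \max\{0, 2\gen - 2 + |S|\} \Bigr).
\]
A direct expansion of this LHS, using $\sum_{\p \in S} v_\p(\mathbf{g}^{\mathbf{i}_k}) = 0$ (since $\mathbf{g}^{\mathbf{i}_k} \in \mathcal{O}_S^*$) and $\sum_{\p \in S}(-v_\p(\Phi(\mathbf{g}))) = h(\Phi(\mathbf{g}))$ (since $v_\p(\Phi(\mathbf{g})) = 0$ for $\p \notin S$), identifies it with $\sum_{\p \in S} v_\p(F(\mathbf{g})) + h_S(F) - \sum_{\p \in S} v_\p^\star + (N+1) h(\Phi(\mathbf{g}))$, where $h_S(F) := \sum_{\p \in S}(-v_\p(F))$.

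The final step converts $\sum v_\p(F(\mathbf{g}))$ into $\sum v_\p^0(F(\mathbf{g}))$ via $v_\p^0 = v_\p + v_\p^\infty$ and the sharper pole bound $v_\p^\infty(F(\mathbf{g})) \le -v_\p(F) - v_\p^\star$; this follows from $F(\mathbf{g}) = \sum_{j \in I_F} a_{\mathbf{i}_j} \mathbf{g}^{\mathbf{i}_j}$ and $v_\p(F(\mathbf{g})) \ge v_\p(F) + v_\p^\star$, together with $-v_\p(F) \ge 0$ and $-v_\p^\star \ge 0$ (the latter because $0 \in I_F$ forces $v_\p^\star \le 0$). Summing gives $\sum_{\p \in S} v_\p^\infty(F(\mathbf{g})) \le h_S(F) - \sum_{\p \in S} v_\p^\star$, so the two terms $h_S(F) - \sum v_\p^\star$ cancel exactly with their counterparts in the LHS identity. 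The coefficient of $h(\Phi(\mathbf{g}))$ thereby collapses from $\tfrac{d_r(N+1)}{d_{r-1}}$ to $\tfrac{d_r(N+1)}{d_{r-1}} - (N+1) = (N+1)\bigl(\tfrac{d_r}{d_{r-1}} - 1\bigr)$, and combining with $h(\Phi(\mathbf{g})) \le dn \max_i h(g_i)$ yields the stated estimate. The main obstacle is precisely this coefficient bookkeeping: a $\p$-independent choice of linear forms (e.g., always dropping $y_0$) would give only the weaker coefficient $\tfrac{d_r(N+1)}{d_{r-1}} - N$, and obtaining the sharper one crucially requires the adaptive $\pi_\p$ so that the $-\sum v_\p^\star$ appearing in the LHS identity can be cancelled against the identical term produced by the sharper pole bound.
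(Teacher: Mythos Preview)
Your argument is correct and follows the same overall strategy as the paper: embed via the degree-$d$ Veronese, turn $F$ into a hyperplane $L_F$, take $\mathcal{L}=\{L_F,y_0,\ldots,y_N\}$ so that $V_{\mathcal{L}}(r')=V_F(r')$ and $h(\mathcal{L})=h(F)$, and apply Theorem~\ref{MSMT} with its parameter shifted to $r-1$. The only genuine tactical difference lies in the choice of the $N+1$ independent forms at each $\p\in S$. The paper keeps all the coordinate forms $L_1,\ldots,L_N$ and, as the $(N{+}1)$-st form, swaps between $L_0$ and $L_{\tilde F}$ according to the sign of $v_\p(F(\mathbf g))$; since $\phi_0(\tilde{\mathbf g})=1$ contributes nothing, this case split produces $v_\p^0(F(\mathbf g))$ directly in the lower bound for the Weil-function sum, with no further conversion needed. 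Your route instead always keeps $L_F$, drops the coordinate $y_{\pi_\p(0)}$ with $\pi_\p(0)\in I_F$ minimizing $v_\p(\mathbf g^{\mathbf i_{\pi_\p(0)}})$, and then converts $v_\p(F(\mathbf g))$ to $v_\p^0(F(\mathbf g))$ via the sharper pole estimate $v_\p^\infty(F(\mathbf g))\le -v_\p(F)-v_\p^\star$, which exactly cancels the extra $h_S(F)-\sum_\p v_\p^\star$ appearing in your LHS identity. Both choices yield the identical final coefficient $(N+1)(d_r/d_{r-1}-1)$ in front of $h(\Phi(\mathbf g))$. The paper's sign-based swap is slightly more economical bookkeeping; your adaptive drop makes the role of $F(\mathbf 0)\ne 0$ (i.e.\ $0\in I_F$, hence $v_\p^\star\le 0$) more explicitly visible in the cancellation.
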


\begin{proof} Let $\Phi=(\phi_{0},\phi_{1},\hdots,\phi_{N}):\mathbb{P}^{n}\to\mathbb{P}^{N}$
be the $d$-tuple embedding of $\mathbb{P}^{n}$ given by the set
of monomials of degree $d$ in $\K[x_{0},\hdots,x_{n}]$, where $\phi_{0}:=x_{0}^{d}$.
Let $\tilde{F}\in\K[x_{0},\hdots,x_{n}]$ be the homogenization of
$F$. Denote by $\tilde{{\bf g}}:=(g_{0},g_{1},\hdots,g_{n})$, where
$g_{0}:=1$.

Since each $\phi_{i}$ is a degree-$d$ monomial in $\K[x_{0},\hdots,x_{n}]$,
we have $v_{\p}(\phi_{i}(\tilde{{\bf g}}))\ge dv_{\p}(\tilde{{\bf g}})=d\min\{v_{\p}({\bf g}),0\}\ge-d\sum_{j=1}^{n}v_{\p}^{\infty}(g_{j})$
for every $\p\in C(\mathbf{k})$ and $i\in\{0,\ldots,N\}$; thus we
have 
\begin{align}
h(\Phi(\tilde{{\bf g}}))\le dn\max_{1\le i\le n}h(g_{i}).\label{phiup-1}
\end{align}
Also, since $\phi_{i}(\tilde{{\bf g}})\in\mathcal{O}_{S}^{*}$, we
have that 
\begin{equation}
\sum_{\p\in S}v_{\p}(\phi_{i}(\tilde{{\bf g}}))=0.\label{eq: sum_formula_Sunit}
\end{equation}

For each $i\in\{0,\ldots,N\}$, denote by $L_{i}$ the linear form
corresponding to the coordinate hyperplanes $\mathbb{P}^{N}$. We
also denote by $L_{\tilde{F}}\in K[y_{0},\ldots,y_{N}]$ the linear
form coming from the monomial expansion (of degree $d$) of $\tilde{F}$;
thus $L_{\tilde{F}}(\Phi(\tilde{\mathbf{g}}))=\tilde{F}(\tilde{\mathbf{g}})$.
Let $\mathcal{L}:=\{L_{i}\,|\,i\in\{0,\ldots,N\}\}\cup\{L_{\tilde{F}}\}$.
By construction and our assumption that one of the coefficients of
$F$ is 1, we have that $h(\mathcal{L})=\tilde{h}(F)=h(F)$, that
$V_{F}(1)$ is spanned by the set of all coefficients of linear forms
in $\mathcal{L}$, and that 
\begin{equation}
v_{\p}(L)\le0\qquad\text{for every }L\in\mathcal{L}\text{ and }\p\in C(\mathbf{k}).\label{eq: vL<=00003D00003D00003D0}
\end{equation}
We also note that any $N+1$ linear forms in $\mathcal{L}$ are linearly
independent over $K$ since $F(0,\hdots,0)\ne0$.

For those $\p\in S$ and $i\in\{0,\ldots,N\}$ satisfying either $i\ne0$
or $v_{\p}(F({\bf g}))\le0$, we define $L_{\p,i}:=L_{i}$; for the
remaining case, we define $L_{\p,i}:=L_{\tilde{F}}$. Hence we see
\begin{align}
L_{\p,i}(\Phi(\tilde{{\bf g}}))=F({\bf g})\qquad & \text{ if \ensuremath{i=0} and \ensuremath{v_{\p}(F({\bf g}))>0};}\label{L0}\\
L_{\p,i}(\Phi(\tilde{{\bf g}}))=\phi_{i}(\tilde{{\bf g}})\qquad & \text{otherwise.}\label{eq:Li}
\end{align}

By assumption,  those $\mathbf{g}^{\mathbf{i}}$ with $\mathbf{i}\in\mathbb{Z}_{\ge0}^{n}$
and $|\mathbf{i}|\le d$ are linearly nondegenerate over $V_{F}(r)$, thus
  $\Phi(\tilde{{\bf g}})\in\mathbb{P}^{N}(K)$
is linearly nondegenerate over $V_{F}(r)$.  Applying Theorem \ref{MSMT}
with $\mathbf{a}=\Phi(\tilde{{\bf g}})$ and $V_{\mathcal{L}}=V_{F}(1)$,
we have 
\begin{align}\label{useSMT3}
 & \sum_{\p\in S}\sum_{i=0}^{N}\left(v_{\p}(L_{\p,i}(\Phi(\tilde{{\bf g}}))-v_{\p}(\Phi(\tilde{{\bf g}}))-v_{\p}(L_{\p,i})\right)\cr
\le & \frac{d_{r}(N+1)}{d_{r-1}}\big(h(\Phi(\tilde{{\bf g}}))+(r+1)h(F)+\frac{Nd_{r}+d_{r}-1}{2}\max\{0,2\gen-2+|S|\}\big).
\end{align}
Together with \eqref{eq: sum_formula_Sunit}, \eqref{L0}, \eqref{eq:Li}
and \eqref{eq: vL<=00003D00003D00003D0}, we have the following estimate
for the left hand side of \eqref{useSMT3}

\begin{align*}
\sum_{i=0}^{N}\sum_{\p\in S}\left(v_{\p}(L_{\p,i}(\Phi(\tilde{{\bf g}}))-v_{\p}(\Phi(\tilde{{\bf g}}))-v_{\p}(L_{\p,i})\right)\ge\sum_{\p\in S}\big(v_{\p}^{0}(F({\bf g}))-(N+1)v_{\p}(\Phi(\tilde{{\bf g}})).
\end{align*}
Therefore, we can derive from \eqref{useSMT3} and \eqref{phiup-1}
that 
\begin{align*}
\sum_{\p\in S}v_{\p}^{0}(F({\bf g}))\le & (\frac{d_{r}}{d_{r-1}}-1)(N+1)dn\max_{1\le i\le n}h(g_{i})+\frac{d_{r}(N+1)}{d_{r-1}}(r+1)h(F)\\
 & +\frac{d_{r}(N+1)(Nd_{r}+d_{r}-1)}{2d_{r-1}}\max\{0,2\gen-2+|S|\}.
\end{align*}
\end{proof}

\subsection{Proof of Theorem \ref{movinggcdunit}}

\begin{proof}[Proof of Theorem \ref{movinggcdunit}] Let $\alpha$
and $\beta$ be one of the nonzero coefficients of $F$ and $G$ respectively,
and put ${\bf g}:=(g_{1},\hdots,g_{n})\in(\mathcal{O}_{S}^{*})^{n}$,
$1\le j\le n$. Since $v_{\p}^{0}(F({\bf g}))\le v_{\p}^{0}(\frac{1}{\alpha}F({\bf g}))+v_{\p}^{0}(\alpha)$
and $v_{\p}^{0}(G({\bf g}))\le v_{\p}^{0}(\frac{1}{\beta}G({\bf g}))+v_{\p}^{0}(\beta)$,
we have 
\begin{align*}
N_{S,{\rm gcd}}(F({\bf g}),G({\bf g}))\le N_{S,{\rm gcd}}(\frac{1}{\alpha}F({\bf g}),\frac{1}{\beta}G({\bf g}))+\tilde{h}(F)+\tilde{h}(G),
\end{align*}
and 
\begin{align*}
h_{{\rm gcd}}(F({\bf g}),G({\bf g}))\le h_{{\rm gcd}}(\frac{1}{\alpha}F({\bf g}),\frac{1}{\beta}G({\bf g}))+\tilde{h}(F)+\tilde{h}(G).
\end{align*}
Then by elementary reductions, which we omit, it suffices to prove
the theorem for $\frac{1}{\alpha}F$ and $\frac{1}{\beta}G$. Therefore,
we assume that, with respect to some fixed total ordering on the set
of monomials in %
\mbox{%
$K[x_{1},\ldots,x_{n}]$%
}, the coefficient attached to the largest monomial appearing in $F$
(resp. in %
\mbox{%
$G$%
}) is 1. In this case, %
\mbox{%
$\tilde{h}(F^{e})=h(F^{e})=eh(F)=e\tilde{h}(F)$%
} for every %
\mbox{%
$e\in\mathbb{N}$%
}, thus we may assume that %
\mbox{%
$F$%
} and %
\mbox{%
$G$%
} have the same degree %
\mbox{%
$d$%
} via replacing %
\mbox{%
$F$%
} (resp. %
\mbox{%
$G$%
}) by some of its powers.

Let $\epsilon>0$ be given. We first choose $m$ sufficiently large
so that $m\ge2d$ and 
\begin{align}
\frac{M'mn}{M}\le\frac{\epsilon}{4},\label{findm}
\end{align}
where %
\mbox{%
$M:=M_{m}:=2\binom{m+n-d}{n}-\binom{m+n-2d}{n}$%
} and $M':=M'_{m}:=\binom{m+n}{n}-M$; this is possible because $M_{m}=\frac{m^{n}}{n!}+O(m^{n-1})$
and %
\mbox{%
$M'=O(m^{n-2})$%
} (by the proof of Theorem \ref{Refinement}). By \eqref{infvr} we
may then choose a sufficiently large integer $r\in\mathbb{N}$ such
that 
\begin{align}
\frac{w}{u}-1\le\frac{\epsilon}{4mn},\label{wu}
\end{align}
where $w:=\dim_{{\bf k}}V_{F,G}(Mr)$ and $u:=\dim_{{\bf k}}V_{F,G}(Mr-M)$
(as in Theorem \ref{Refinement}); in the case where %
\mbox{%
$F(0,\ldots,0)\ne0$%
}, we further require that 
\begin{align}
\frac{w'}{u'}-1\le\frac{\epsilon}{8dn(N+1)},\label{wu'}
\end{align}
where $w':=\dim V_{F}(r)$, $u':=\dim V_{F}(r-1)$ and $N:=\binom{n+d}{n}-1$
(as in Theorem \ref{=00003D00003D00003D000024S=00003D00003D00003D000024 part}).

We first consider when those $\mathbf{g}^{\mathbf{i}}$ with $\mathbf{i}\in\mathbb{Z}_{\ge0}^{n}$
and $|\mathbf{i}|\le m$ are   linearly degenerate over $V_{F,G}(Mr+1)$, i.e. there is a non-trivial
relation 
\begin{align}
\sum_{\mathbf{i}}\alpha_{\mathbf{i}}\mathbf{g}^{\mathbf{i}}=0,\label{uniteq}
\end{align}
where the sum runs over those $\mathbf{i}\in\mathbb{Z}_{\ge0}^{n}$
and $|\mathbf{i}|\le m$, and $\alpha_{\mathbf{i}}\in V_{F,G}(Mr+1)$
for each $\mathbf{i}$ such that $\alpha_{\mathbf{i}_{0}}\ne0$ for
some $\mathbf{i}_{0}$. Then we have 
\begin{align}
\sum_{\mathbf{i}\ne\mathbf{i}_{0}}\frac{\alpha_{\mathbf{i}}}{\alpha_{\mathbf{i}_{0}}}\mathbf{g}^{\mathbf{i}-\mathbf{i}_{0}}=-1.\label{uniteq-1}
\end{align}
Since ${\bf g}\in(\mathcal{O}_{S}^{*})^{n}$ and the number of zeros
and poles of each $\alpha_{\mathbf{i}}$ appearing in \eqref{uniteq}
is bounded by $2h(\alpha_{\mathbf{i}})\le2(Mr+1)(\tilde{h}(F)+\tilde{h}(G))$,
we can apply Theorem \ref{BrMa} with some $S'$ including $S$ and
the zeros and poles of those $\alpha_{\mathbf{i}}$ appearing in \eqref{uniteq}
to get 
\[
h(\frac{\alpha_{\mathbf{i}}}{\alpha_{\mathbf{i}_{0}}}\mathbf{g}^{\mathbf{i}-\mathbf{i}_{0}})\le\tilde{c}\max\left\{ 0,2\gen-2+|S|+2(Mr+1)\binom{n+m}{n}(\tilde{h}(F)+\tilde{h}(G))\right\} ,
\]
where $\tilde{c}:=\frac{1}{2}\left(\binom{n+m}{n}-1\right)\left(\binom{n+m}{n}-2\right)$.
Then 
\begin{align}\label{multiht1}
h(\mathbf{g}^{\mathbf{i}-\mathbf{i}_{0}}) & \le h(\frac{\alpha_{\mathbf{i}}}{\alpha_{\mathbf{i}_{0}}})+h(\frac{\alpha_{\mathbf{i}}}{\alpha_{\mathbf{i}_{0}}}\mathbf{g}^{\mathbf{i}-\mathbf{i}_{0}})\cr
 & \le2(Mr+1)(\tilde{c}\binom{n+m}{n}+1)(\tilde{h}(F)+\tilde{h}(G))+\tilde{c}\max\{0,2\gen-2+|S|\},
\end{align}
which fits the assertion \eqref{multiheight11} with $(m_{1},\ldots,m_{n})=\mathbf{i}-\mathbf{i}_{0}$
and $\sum_{j=1}^{n}|m_{j}|\le2m$.

We now consider when  those $\mathbf{g}^{\mathbf{i}}$ with $\mathbf{i}\in\mathbb{Z}_{\ge0}^{n}$
and $|\mathbf{i}|\le m$ are linearly nondegenerate over $V_{F,G}(Mr+1)$. By Theorem \ref{Refinement}
combined with \eqref{findm} and \eqref{wu}, 
\begin{align*}
N_{S,{\rm gcd}}(F({\bf g}),G({\bf g})) & \le(\frac{M'mn}{M}+(\frac{w}{u}-1)mn)\max_{1\le i\le n}\{h(g_{i})\}+\tilde{c}_{1}(\tilde{h}(F)+\tilde{h}(G))+\tilde{c}_{2}\max\{0,2\gen-2+|S|\}\\
 & \le\frac{\epsilon}{2}\max_{1\le i\le n}\{h(g_{i})\}+\tilde{c}_{1}(\tilde{h}(F)+\tilde{h}(G))+\tilde{c}_{2}\max\{0,2\gen-2+|S|\},
\end{align*}
where $\tilde{c}_{1}$ and $\tilde{c}_{2}$ depend only on $(M_{m},r)$,
thus only on $\epsilon$. Hence, if 
\begin{align}
\max_{1\le i\le n}\{h(g_{i})\}\ge\frac{4}{\epsilon}\left(\tilde{c}_{1}(\tilde{h}(F)+\tilde{h}(G))+\tilde{c}_{2}\max\{0,2\gen-2+|S|\}\right),\label{heightbound1}
\end{align}
then 
\begin{align}
N_{S,{\rm gcd}}(F({\bf g}),G({\bf g}))\le\frac{3\epsilon}{4}\max_{1\le i\le n}\{h(g_{i})\}.\label{NSgcd}
\end{align}
We now estimate $h_{gcd}(F({\bf g}),G({\bf g}))$ using Theorem \ref{=00003D00003D00003D000024S=00003D00003D00003D000024 part}
with extra assumption that $F$ or $G$ does not vanish at the origin.
We may assume that $F(0,\hdots,0)\ne0$. Since $m\ge2d$ and  those $\mathbf{g}^{\mathbf{i}}$ with $\mathbf{i}\in\mathbb{Z}_{\ge0}^{n}$
and $|\mathbf{i}|\le m$ are linearly nondegenerate over $V_{F,G}(Mr+1)$, it is clear that
 those $\mathbf{g}^{\mathbf{i}}$ with $\mathbf{i}\in\mathbb{Z}_{\ge0}^{n}$
and $|\mathbf{i}|\le d$ are linearly nondegenerate over $V_{F}(r)$. Then by Theorem \ref{=00003D00003D00003D000024S=00003D00003D00003D000024 part}
and \eqref{wu'}, we have the the following 
\begin{align}
\sum_{\p\in S}v_{\p}^{0}(F({\bf g})) & \le\frac{\epsilon}{8}\max_{1\le i\le n}h(g_{i})+c_{1}'h(F)+c_{2}'\max\{0,2\gen-2+|S|\},\label{eq: S-part}
\end{align}
where $c_{1}':=\frac{w'(N+1)}{u'}(r+1)$ and $c_{2}':=\frac{w'(N+1)(Nw'+w'-1)}{2u'}$
with $N:=\binom{n+d}{n}-1$. Note that $c_{1}'$ and $c_{2}'$ depend
only on $(w',u')$, thus only on $\epsilon$. By \eqref{eq: S-part},
we see that if both \eqref{heightbound1} and 
\begin{align*}
\max_{1\le i\le n}\{h(g_{i})\}\ge\frac{8}{\epsilon}\left(c_{1}'h(F)+c_{2}'\max\{0,2\gen-2+|S|\}\right)
\end{align*}
hold, then 
\begin{align*}
\sum_{\p\in S}\min\{v_{\p}^{0}(F({\bf g})),v_{\p}^{0}(G({\bf g}))\}\le\sum_{\p\in S}v_{\p}^{0}(F({\bf g}))\le\frac{\epsilon}{4}\max_{1\le i\le n}\{h(g_{i})\},
\end{align*}
and hence together with \eqref{NSgcd}, we have 
\begin{align*}
h_{{\rm gcd}}(F({\bf g}),G({\bf g}))\le\epsilon\max_{1\le i\le n}\{h(g_{i})\}.
\end{align*}
\end{proof}

\begin{proof}[Proof of Theorem \ref{n=00003D00003D00003D00003D00003D00003D2gcdunit}]
Since $F,\,G\in{\bf k}[x_{1},x_{2}]$, we have that $V_{F,G}(r)=V_{F}(r)=\mathbf{k}$
for each $r\in\mathbb{N}$. Given $\epsilon>0$, we first choose $m$
sufficiently large satisfying \eqref{findm} with $n=2$. Suppose that  those $g_1^{i_1}g_2^{i_2}$ with $(i_1,i_2)\in\mathbb{Z}_{\ge0}^2$ and $i_1+i_2\le m$
 are linearly dependent over ${\bf k}$. Then there is a linear relation
\begin{align}
\sum_{\mathbf{j}=(j_{1},j_{2})}\alpha_{\mathbf{j}}g_{1}^{j_{1}}g_{2}^{j_{2}}=1,\label{uniteq0}
\end{align}
where $\alpha_{\mathbf{j}}\in{\bf k}^{*}$, $\mathbf{j}\in\mathbb{Z}^{2}\setminus\{(0,0)\}$
and $|j_{1}|+|j_{2}|\le2m$ for each appearing $\mathbf{j}=(j_{1},j_{2})$.
We may also assume that no proper subsum of the left hand side of
\eqref{uniteq0} vanishes. Consider the subgroup $J\subset\mathbb{Z}^{2}$
generated by those $\mathbf{j}$ appearing in \eqref{uniteq0}. If
$J$ has rank one, i.e., there exists $(m_{1},m_{2})\in\boldsymbol{\mathbb{Z}^{2}}\setminus\{(0,0)\}$
such that $(j_{1},j_{2})=\lambda_{\mathbf{j}}(m_{1},m_{2})$ with
$\lambda_{\mathbf{j}}\in\mathbb{Z}$ for every $\mathbf{j}=(j_{1},j_{2})$
appearing in \eqref{uniteq0}, then $|m_{1}|+|m_{2}|\le2m$ and 
\begin{align*}
\sum_{\mathbf{j}}\alpha_{\mathbf{j}}(g_{1}^{m_{1}}g_{2}^{m_{2}})^{\lambda_{\mathbf{j}}}=1,
\end{align*}
which implies that $g_{1}^{m_{1}}g_{2}^{m_{2}}\in\mathbf{k}$. For
the other cases, $J$ must have rank two, thus we can find $(j_{1},j_{2})$
and $(j_{1}',j_{2}')$ appearing in \eqref{uniteq0} such that $\mathbb{Q}\cdot(j_{1},j_{2})\ne\mathbb{Q}\cdot(j_{1}',j_{2}')$
(i.e. $(j_{1},j_{2})$ and $(j_{1}',j_{2}')$ are $\mathbb{Q}$-linearly
independent), and 
\begin{equation}
\max\{h(g_{1}^{j_{1}}g_{2}^{j_{2}}),h(g_{1}^{j_{1}'}g_{2}^{j_{2}'})\}\le\frac{1}{2}\left(\binom{m+2}{2}-1\right)\left(\binom{m+2}{2}-2\right)\max\{0,2\gen-2+|S|\}\label{eq: htb}
\end{equation}
by using Theorem \ref{BrMa}. Since $\mathbb{Q}\cdot(j_{1},j_{2})\ne\mathbb{Q}\cdot(j_{1}',j_{2}')$
and thus $j_{1}j_{2}'\ne j_{1}'j_{2}$, we note that $(j_{2}'j_{1}-j_{2}j_{1}',0)=j_{2}'(j_{1},j_{2})-j_{2}(j_{1}',j_{2}')$,
thus by \eqref{eq: htb} we have 
\begin{align*}
h(g_{1})\le h(g_{1}^{j_{2}'j_{1}-j_{2}j_{1}'})&\le|j_{2}'|h(g_{1}^{j_{1}}g_{2}^{j_{2}})+|j_{2}|h(g_{1}^{j_{1}'}g_{2}^{j_{2}'})\cr
&\le2m\left(\binom{m+2}{2}-1\right)\left(\binom{m+2}{2}-2\right)\max\{0,2\gen-2+|S|\}.
\end{align*}
With similar estimates for %
\mbox{%
$h(g_{2})$%
}, this implies that %
\mbox{%
$\max\{h(g_{1}),h(g_{2})\}\le c\max\{0,2\gen-2+|S|\}$%
}, where $c:=2m\left(\binom{m+2}{2}-1\right)\left(\binom{m+2}{2}-2\right)$
depends only on %
\mbox{%
$\epsilon$%
}.

In the case that %
those $g_1^{i_1}g_2^{i_2}$ with $(i_1,i_2)\in\mathbb{Z}_{\ge0}^2$ and $i_1+i_2\le m$ are  linearly independent over %
\mbox{%
${\bf k}$%
}, we shall conclude that %
\mbox{%
$N_{S,{\rm gcd}}(F(g_{1},g_{2}),G(g_{1},g_{2}))\le\epsilon\max\{h(g_{1}),h(g_{2})\}$%
}, and that %
\mbox{%
$h_{{\rm gcd}}(F(g_{1},g_{2}),G(g_{1},g_{2}))\le\epsilon\max\{h(g_{1}),h(g_{2})\}$%
} if we further assume that not both of %
\mbox{%
$F$%
} and %
\mbox{%
$G$%
} vanish at %
\mbox{%
$(0,0)$%
}. The corresponding part in the proof of Theorem %
\mbox{%
\ref{movinggcdunit}%
} works, but actually an easier proof suffices. We omit the details.
\end{proof}




\begin{proof}[Proof of Theorem \ref{movinggcdpower}] Let $S=S_{{\bf g}}:=\{\p\in C\,|\,v_{\p}(g_{i})\ne0\text{ for }1\le i\le n\}.$
Then, $g_{i}\in\mathcal{O}_{S}^{*}$ for each $1\le i\le n$, and
\begin{align}
|S|\le2\sum_{i=1}^{n}h(g_{i})\le2n\max_{1\le i\le n}\{h(g_{i})\}.\label{sizeS}
\end{align}
Let $\epsilon>0$. Suppose that our assertion (i) (resp. (ii)) does
not hold for some %
\mbox{%
$\ell$%
}. By Theorem %
\mbox{%
\ref{movinggcdunit}%
} applied to %
\mbox{%
$(g_{1}^{\ell},\hdots,g_{n}^{\ell})\in({\cal O}_{S}^{*})^{n}$%
}, there exist an integer %
\mbox{%
$m$%
}, positive constants %
\mbox{%
$c_{i}'$%
}, %
\mbox{%
$0\le i\le4$%
}, all depending only on %
\mbox{%
$\epsilon$%
}, such that we have either 
\begin{equation}
\max_{1\le i\le n}h(g_{i}^{\ell})\le c_{1}'(\tilde{h}(F)+\tilde{h}(G))+c_{2}'\max\{0,2\gen-2+|S|\},\label{eq:maxht}
\end{equation}
or 
\begin{align}
h(g_{1}^{{\ell}m_{1}}\cdots g_{n}^{{\ell}m_{n}})\le c_{3}'(\tilde{h}(F)+\tilde{h}(G))+c_{4}'\max\{0,2\gen-2+|S|\}\label{multiheight1}
\end{align}
for some integers %
\mbox{%
$m_{1},\hdots,m_{n}$%
}, not all zeros with %
\mbox{%
$\sum|m_{i}|\le2m$%
}. If %
\mbox{%
$g_{1}^{m_{1}}\cdots g_{n}^{m_{n}}\notin{\bf k}$%
}, then we must have %
\mbox{%
$\max_{1\le i\le n}h(g_{i})\ge1$%
} and %
\mbox{%
$h(g_{1}^{m_{1}}\cdots g_{n}^{m_{n}})\ge1$%
}. Hence %
\mbox{%
\eqref{sizeS}%
}, %
\mbox{%
\eqref{eq:maxht}%
} and %
\mbox{%
\eqref{multiheight1}%
} imply that \\
 \raisebox{-\belowdisplayshortskip}{%
\noindent\parbox[b]{1\linewidth}{%
\begin{align*}
\ell & \le(c_{1}'+c_{3}')(\tilde{h}(F)+\tilde{h}(G))+(c_{2}'+c_{4}')\max\{0,2\gen-2+|S|\}\\
 & \le(c_{1}'+c_{3}')(\tilde{h}(F)+\tilde{h}(G))+2(c_{2}'+c_{4}')(\gen+n\max_{1\le i\le n}\{h(g_{i})\}).
\end{align*}
}}\\
 This shows that our desired conclusion holds with %
\mbox{%
$c_{1}:=c_{1}'+c_{3}'$%
} and %
\mbox{%
$c_{2}:=2(c_{2}'+c_{4}')$%
}. \end{proof}


\end{document}